\documentclass[reqno,11pt]{amsart}

\usepackage{a4wide}
\usepackage[english]{babel}
\usepackage[utf8]{inputenc}
\usepackage{amsmath,amssymb}
\usepackage{tikz}
\usepackage[hidelinks]{hyperref}

\newcommand{\dom}{\operatorname{dom}}
\newcommand{\ran}{\operatorname{ran}}
\newcommand{\Sc}{\operatorname{Sc}}
\renewcommand{\Im}{\operatorname{Im}}
\newcommand{\sgn}{\operatorname{sgn}}
\newcommand{\Div}{\operatorname{div}}
\newcommand{\bnd}{{\text{bnd}}}
\newtheorem{thm}{Theorem}[section]
\newtheorem{lem}[thm]{Lemma}
\newtheorem{prop}[thm]{Proposition}
\newtheorem{cor}[thm]{Corollary}
\newtheorem{defi}[thm]{Definition}
\newtheorem{rem}[thm]{Remark}
\newtheorem{ass}[thm]{Assumption}
\numberwithin{equation}{section}

\begin{document}

\title[Nonlocal Fourier Laws for Heat Propagation]{Nonlocal Fourier Laws for Heat Propagation via Fractional powers of Vector Operators}

\author[F. Colombo]{Fabrizio Colombo}
\address{(FC) Politecnico di Milano, Dipartimento di Matematica, Via E. Bonardi 9, 20133 Milano, Italy}
\email{fabrizio.colombo@polimi.it}

\author[F. Mantovani]{Francesco Mantovani}
\address{(FM) Politecnico di Milano, Dipartimento di Matematica, Via E. Bonardi 9, 20133 Milano, Italy}
\email{francesco.mantovani@polimi.it}

\author[P. Schlosser]{Peter Schlosser}
\address{(PS) Institute of Applied Mathematics, Graz University of Technology, Steyrergasse 30, 8010 Graz, Austria}
\email{pschlosser@math.tugraz.at}

\thanks{F. Colombo is supported by MUR grant Dipartimento di Eccellenza 2023-2027.}

\begin{abstract}
The present work is devoted to the study of fractional powers of vector operators, with particular emphasis on the gradient operator with non-constant coefficients. Within the setting of Clifford algebra $\mathbb{R}_n$, this operator turns out to have bisectorial properties.
By applying the spectral theory on the $S$-spectrum, we address a fundamental mathematical challenge: unlike sectorial operators, bisectorial operators involve fractional powers that are not analytic on the negative real line. To circumvent this, we introduce a novel definition of the fractional power function in this setting. Building upon previous works on bisectorial vector operators and weak solutions, we extend the definition of fractional powers to abstract vector operators. The core contribution of this work is the application of the functional calculus for vector operators to the gradient operator, showing that these fractional powers provide a rigorous mathematical foundation for nonlocal Fourier laws in heat propagation.
\end{abstract}

\maketitle

AMS Classification 47A10, 47A60. \medskip

Keywords: Nonlocal Fourier laws, Fractional powers, Vector operator, Generalized Gradient, $S$-spectrum.

\section{Introduction}

In this paper we study the gradient operator with non-constant
coefficients $a_i\colon\mathbb{R}^n\to\mathbb{R}$, acting within the Clifford algebra
$\mathbb{R}_n$ of dimension $n\geq 3$.
Specifically, let $e_1,\dots,e_n$ denote the standard imaginary units of
$\mathbb{R}_n$ satisfying the anticommutation relations
$e_ie_j+e_je_i=-2\delta_{ij}$;
the gradient operator is then formally defined as

\begin{equation}\label{Eq_Gradient_formal}
  \nabla_a := \sum_{i=1}^n e_i\,a_i(x)\,\frac{\partial}{\partial x_i}.
\end{equation}
Applying the spectral theory on the $S$-spectrum, we use the
$H^\infty$-functional calculus associated with the operator $\nabla_a$
in the Clifford-algebra setting.
This functional calculus provides a rigorous framework for defining the
fractional powers of vector operators such as $\nabla_a$.

The key observation is that the fractional powers
of $\nabla_a$, constructed via the $H^\infty$-functional calculus
on the $S$-spectrum, provide a natural operator-theoretic model for the
nonlocal Fourier law.
This constitutive relation generalises Fourier's classical local law
to a setting in which heat conduction at a given point depends on the
temperature distribution over an extended spatial region.
In particular, when all coefficients are constant and equal to one,
i.e., $a_i\equiv 1$ for every $i=1,\dots,n$, the operator $\nabla_a$
reduces to the standard gradient
\[
  \nabla := \sum_{i=1}^n e_i\,\frac{\partial}{\partial x_i},
\]
and the fractional powers of $\nabla$ constructed within the
proposed framework, when we apply the divergence operator, recover the classical fractional Laplacian. This consistency result confirms that the theory developed here is a
genuine extension of the well established constant coefficient theory to
the variable coefficient in the Clifford algebra settings.

\medskip

Spectral analysis of operators in a Banach module over $\mathbb{R}_n$ is significantly more complicated than in the complex case, due to the non-commutativity of $\mathbb{R}_n$. The natural notion of spectrum in this setting is the $S$-spectrum, see Definition~\ref{defi_S_spectrum}, which is closely connected to hypercomplex analysis. For comprehensive discussions, the reader is referred to the books \cite{FJBOOK,CGK,ColomboSabadiniStruppa2011}. This spectral theory on the $S$-spectrum also applies to operators in Vector Analysis and Differential Geometry, particularly for the study of Dirac operators on manifolds, which are specific instances of Clifford operators. \medskip

This theory originated to provide a rigorous mathematical foundation for Quaternionic Quantum Mechanics, but it was subsequently generalized to the setting of Clifford operators. This foundational framework was established by Birkhoff and von Neumann \cite{Birkhoff1936} and expanded upon in works such as \cite{Emch1963,Finkelstein1962,Horwitz1984} and Adler's book \cite{Adler1995}. \medskip

In our previous work \cite{Gradient}, we study the $S$-spectral problem of the gradient operator \eqref{Eq_Gradient_formal} and concluded its bisectorial nature. In particular, $\nabla_a$ has $S$-spectrum on the negative as well as on the positive real line. This makes it hard to give meaning to fractional powers $\nabla_a^\alpha$ (where $\alpha$ is a suitable real number) of the gradient, since the classical fractional power function $s^\alpha$ is not defined (not holomorphic) on the negative real axis. Compared to complex fractional powers, in the hypercomplex setting of $\mathbb{R}_n$ it is also not possible to rotate this axis of non-holomorphicity into another direction, it is fixed on the negative real line. Our solution to this problem are the functions
\begin{equation}\label{Eq_palpha_qalpha}
p_\alpha(s):=\begin{cases} s^\alpha, & \Sc(s)>0, \\ -(-s)^\alpha, & \Sc(s)<0. \end{cases}\qquad\text{and}\qquad q_\alpha(s):=\begin{cases} s^\alpha, & \Sc(s)>0, \\ (-s)^\alpha, & \Sc(s)<0, \end{cases}
\end{equation}
where the precise definition of the term $s^\alpha$ is
\begin{equation}\label{Eq_salpha}
s^\alpha:=e^{\alpha\ln(s)},\qquad\Sc(s)>0,
\end{equation}
using the logarithm of a Clifford number
\begin{equation}\label{Eq_ln}
\ln(s):=\ln|s|+J\arg(s),\qquad\Sc(s)>0.
\end{equation}
Here $J\in\mathbb{S}$ is such that $s\in\mathbb{C}_J$, and $\arg(s)\in(-\frac{\pi}{2},\frac{\pi}{2})$ is the argument of $s$, treated as complex number in $\mathbb{C}_J$. Since the function $q_\alpha$ can also be written as
\begin{equation*}
q_\alpha(s)=(s^2)^{\frac{\alpha}{2}},
\end{equation*}
this function is simply a rewriting of the classical fractional power of the square $s^2$. However, the function $p_\alpha$ is much more natural as a definition of fractional powers of real valued numbers, in the sense that for $\Sc(s)>0$, the angle between $p_\alpha(s)$ and the positive real axis changes by the factor $\alpha$, while for $\Sc(s)<0$, the angle between $p_\alpha(s)$ and the negative real axis changes with the factor $\alpha$.  \medskip

We remark that the theory of fractional powers of complex operators has been developed by several authors. Without claiming completeness, we mention the early works \cite{Balakrishnan1960,Guzman1976,Guzman1978a,Guzman1978b} and also \cite{Kato1960,Komatsu1966,Komatsu1967,Komatsu1969,Watanabe1961,Yosida1960}. The literature is now very wide and it has developed in several directions. \medskip

In order to give meaning to the fractional powers $p_\alpha(T)$ and $q_\alpha(T)$ of any bisectorial Clifford operator $T$, we will use the so called $H^\infty$-functional calculus. The $H^\infty$-functional calculus, see \cite{Haase,HYTONBOOK1,HYTONBOOK2}, extends the Riesz-Dunford functional calculus \cite{Dunford1988,Rudin1987} to sectorial and bisectorial operators. It is established via a three-step procedure: First, the calculus is defined for functions that exhibit decay at zero and infinity, see Definition~\ref{defi_Omega}. In the second step it will be extended to functions which are continuous at zero and at infinity, see Definition~\ref{defi_Extended}. In the third step, via a regularization procedure, the calculus is extended to functions with polynomial growth, see Definition~\ref{defi_Hinfty} and Remark~\ref{rem_Polynomially_growing_functions}. This theory is of fundamental importance for linear operators on Banach spaces, and it is used to study partial differential equations and time-dependent evolution processes. In our previous works \cite{MS24}, there is already a study of bisectorial operators in the Clifford setting and a detailed investigation of the $H^\infty$-functional calculus is provided.  \medskip

Our approach to fractional diffusion problems, based on the $S$-spectrum, extends beyond the Fourier law with constant coefficients. It applies to the general variable-coefficient case \eqref{Eq_Gradient_formal} and generates the associated non-local diffusion operator. \medskip

To illustrate this, let $v:\mathbb{R}^n\times[0,\infty)\rightarrow\mathbb{R}$ denote the temperature and $q$ the heat flow. For $x=(x_1,\dots,x_n)\in\mathbb{R}^n$ and $t>0$, the classical heat equation is deduced from the following two laws:
\begin{align*}
q(x,t)+\nabla_av(x,t)&=0,\qquad\text{(Fourier's law)} \\
\frac{\partial}{\partial t}v(x,t)+\Div q(x,t)&=0.\qquad\text{(Conservation of energy)}
\end{align*}
In our framework, we define a fractional Fourier law using fractional powers $p_\alpha(\nabla_a)$ of the generalized gradient. Subsequently, the fractional Fourier law $q(x,t)=-p_\alpha(\nabla_a)v(x,t)$, is replaced into the energy conservation law. In doing so we can also formulate the associated time-independent spectral problem of the fractional heat equation
\begin{equation}\label{Eq_Fractional_heat_equation}
\lambda v(x)-\Div p_\alpha(\nabla_a)v(x)=f(x),\qquad\lambda\in\mathbb{C}.
\end{equation}
This approach based on the theory on the $S$-spectrum has advantages that include the following.

\begin{enumerate}
\item[i)] It modifies the Fourier law while preserving the conservation of energy. \medskip

\item[ii)] It is applicable to a large class of operators, including gradients with non-constant coefficients. In the specific case of the standard gradient $\nabla$, we recover the fractional heat equation with the fractional Laplacian defined via Fourier transform or the spectral theorem. \medskip

\item[iii)] Keeping the divergence form of the evolution equation allows for an immediate and natural definition of the weak solution to the fractional evolution problem. \medskip
\end{enumerate}

\textit{The content of the paper}: In Section~\ref{sec_Preliminaries}, we present preliminary material regarding the spectral theory on the $S$-spectrum and recall basic facts concerning bisectorial operators and the functional calculi we will use in this article. In Section~\ref{sec_Fractional_powers}, we define the fractional powers of abstract bisectorial operators.
First, in Section~\ref{sec_Positive_fractional_powers} we define positive fractional powers for operators which are not necessarily injective. We prove classical properties like the power rule or the decomposition rule of these exponents.
Then in Section~\ref{sec_Negative_fractional_powers} we also introduce negative powers, but only for injective operators. Starting from Section~\ref{sec_Gradient}, we apply the abstract results to concrete problems that lead to new fractional equations for heat propagation. Specifically, we establish bisectorial estimates for the gradient operator with non-constant coefficients. In particular, we derive regularity results of weak solutions in Theorem~\ref{thm_Strong_solution}, injectivity of the gradient in Proposition~\ref{prop_Gradient_injective}, and the fact that $\nabla_a$ is a bisectorial operator in Theorem~\ref{thm_Gradient_bisectorial}. All these results allow us to define fractional powers of the gradient, and as a central result, we prove that the fractional power acts on real function as a vector valued function, see Theorem~\ref{thm_palpha_vector_operator}.

\section{Preliminaries on Clifford Algebras and Clifford modules}\label{sec_Preliminaries}

In this section we will fix the algebraic and functional analytic setting of this paper. The underlying algebra in this article will be the real \textit{Clifford algebra} $\mathbb{R}_n$ over $n$ \textit{imaginary units} $e_1,\dots,e_n$, which satisfy the relations
\begin{equation*}
e_i^2=-1\qquad\text{and}\qquad e_ie_j=-e_je_i,\qquad i\neq j\in\{1,\dots,n\}.
\end{equation*}
In this context we will denote

\begin{enumerate}
\item[$\circ$] $\mathbb{R}^{n+1}:=\big\{x_0+x_1e_1+\dots+x_ne_n\;\big|\;x_0,x_1,\dots,x_n\in\mathbb{R}\big\}$ the subset of all \textit{paravectors}, \medskip

\item[$\circ$] $\mathbb{S}:=\big\{x_1e_1+\dots+x_ne_n\;\big|\;x_1^2+\dots+x_n^2=1\big\}$ the set of all imaginary units, \medskip

\item[$\circ$] $\mathbb{C}_J:=\big\{x+Jy\;\big|\;x,y\in\mathbb{R}\}$ the complex hyperplane of the imaginary unit $J\in\mathbb{S}$.
\end{enumerate}

In particular, we will consider \textit{Hilbert modules} over $\mathbb{R}_n$, which are real Hilbert spaces $V$ where the linearity of the space and the inner product is compatible with $\mathbb{R}_n$. The space of bounded, everywhere defined operators $T:V\rightarrow V$ will be denoted by $\mathcal{B}(V)$ and the set of closed, unbounded operators by $\mathcal{K}(V)$. An overview over the notation and special objects related to the Clifford algebra $\mathbb{R}_n$ can be found in \cite[Section~2]{MS24}. \medskip

Differently from complex Hilbert spaces, in Cliffordian Hilbert modules the spectrum of an operator $T$ is connected to the bounded invertibility of the operator
\begin{equation}\label{Eq_Qs}
Q_s[T]:=T^2-2s_0T+|s|^2,\qquad\text{with }\dom(Q_s[T]):=\dom(T^2).
\end{equation}
This suggests the following definition of $S$-spectrum.

\begin{defi}[$S$-Spectrum]\label{defi_S_spectrum}
For every $T\in\mathcal{K}(V)$, its \textit{$S$-resolvent set} and \textit{$S$-sepctrum} are defined as
\begin{equation*}
\rho_S(T):=\big\{s\in\mathbb{R}^{n+1}\;\big|\;Q_s[T]^{-1}\in\mathcal{B}(V)\big\}\qquad\text{and}\qquad\sigma_S(T):=\mathbb{R}^{n+1}\setminus\rho_S(T),
\end{equation*}
where $\mathcal{B}(V)$ is the space of bounded everywhere defined operators. Moreover, for every $s\in\rho_S(T)$ we define the \textit{left} and the \textit{right $S$-resolvent operators}
\begin{equation}\label{Eq_SL_SR}
S_L^{-1}(s,T):=Q_s[T]^{-1}\overline{s}-TQ_s[T]^{-1}\qquad\text{and}\qquad S_R^{-1}(s,T):=(\overline{s}-T)Q_s[T]^{-1}.
\end{equation}
\end{defi}

In this article we will consider the important special class of bisectorial operators. In order to introduce them, we define for every $\omega\in(0,\frac{\pi}{2})$ the open \textit{double sector}
\begin{equation*}
D_\omega:=\big\{re^{J\phi}\;\big|\;r>0,\,J\in\mathbb{S},\,\phi\in(-\omega,\omega)\cup(\pi-\omega,\pi+\omega)\big\},
\end{equation*}

\begin{defi}[Bisectorial operator]\label{defi_Bisectorial_operator}
An operator $T\in\mathcal{K}(V)$ is called \textit{bisectorial of angle} $\omega\in(0,\frac{\pi}{2})$, if its $S$-spectrum is contained in the closed double sector
\begin{equation}\label{Eq_Bisectorial_Spectrum}
\sigma_S(T)\subseteq\overline{D_\omega},
\end{equation}
and for every $\varphi\in(\omega,\frac{\pi}{2})$ there exists some $C_\varphi\geq 0$ such that the left $S$-resolvent \eqref{Eq_SL_SR} satisfies
\begin{equation}\label{Eq_SL_estimate}
\Vert S_L^{-1}(s,T)\Vert\leq\frac{C_\varphi}{|s|},\qquad s\in\mathbb{R}^{n+1}\setminus(D_\varphi\cup\{0\}).
\end{equation}
\end{defi}

For these operators, it is possible to define $f(T)$ via the so called $\omega$-functional calculus \cite[Definition~3.5]{MS24} if $f$ is holomorphic and decays at zero and at infinity, and more importantly via the $H^\infty$-functional calculus \cite[Definition~5.3]{MS24} if $f$ is holomorphic and polynomially growing at zero and infinity. Following early explorations in \cite{ACQS2016,CGdiffusion2018}, the calculus was recently established for unbounded bisectorial operators in Clifford modules \cite{MS24}, thanks to the universality property described in \cite{ADVCGKS}. While for left slice holomorphic functions, this calculus is considered already in \cite[Theorem 7.2.6]{FJBOOK}, $f(T)$ for right holomorphic functions was a bigger challenge and only recently in \cite[Definition~4.7]{CMS25} it was fully understood and developed. \medskip

More precisely, if we denote by $\mathcal{SH}_L(D_\theta)$ the space of left slice-hyperholomorphic functions $f:D_\theta\rightarrow\mathbb{R}_n$, see also \cite[Definition~2.1]{MS24}, we will consider the following two functional calculi in this paper.

\begin{defi}[$\omega$-functional calculus]\label{defi_Omega}
Let $T\in\mathcal{K}(V)$ be bisectorial of angle $\omega\in(0,\frac{\pi}{2})$. Then, for every $f\in\mathcal{SH}_L(D_\theta)$, which satisfies the estimate
\begin{equation}\label{Eq_Omega_estimate}
|f(s)|\leq\frac{C_\alpha|s|^\alpha}{1+|s|^{2\alpha}},\qquad s\in D_\theta,
\end{equation}
we define the \textit{$\omega$-functional calculus} \medskip

\begin{minipage}{0.4\textwidth}
\begin{center}
\begin{tikzpicture}[scale=0.8]
\fill[black!15] (0,0)--(1.56,1.56) arc (45:-45:2.2)--(0,0)--(-1.56,-1.56) arc (225:135:2.2);
\fill[black!30] (0,0)--(2,0.93) arc (25:-25:2.2)--(0,0)--(-2,-0.93) arc (205:155:2.2);
\draw (2,0.93)--(-2,-0.93);
\draw (2,-0.93)--(-2,0.93);
\draw (1.56,1.56)--(-1.56,-1.56);
\draw (1.56,-1.56)--(-1.56,1.56);
\draw (0.9,0) arc (0:25:0.9) (0.67,-0.1) node[anchor=south] {\tiny{$\omega$}};
\draw (1.3,0) arc (0:35:1.3) (1.05,-0.05) node[anchor=south] {\tiny{$\varphi$}};
\draw (1.7,0) arc (0:45:1.7) (1.45,0.05) node[anchor=south] {\tiny{$\theta$}};
\draw[thick] (1.8,1.26)--(-1.8,-1.26);
\draw[thick] (1.8,-1.26)--(-1.8,1.26);
\draw[thick,->] (1.8,1.26)--(1.47,1.03);
\draw[thick,->] (0,0)--(1.47,-1.03);
\draw[thick,->] (-1.8,-1.26)--(-1.47,-1.03);
\draw[thick,->] (0,0)--(-1.47,1.03);
\draw[->] (-2.4,0)--(2.6,0);
\draw[->] (0,-1.5)--(0,1.5) node[anchor=north east] {\large{$\mathbb{C}_J$}};
\end{tikzpicture}
\end{center}
\end{minipage}
\begin{minipage}{0.59\textwidth}
\begin{equation}\label{Eq_Omega}
f(T):=\frac{1}{2\pi}\int_{\partial D_\varphi\cap\mathbb{C}_J}S_L^{-1}(s,T)ds_Jf(s),
\end{equation}
where $\varphi\in(\omega,\theta)$ and $J\in\mathbb{S}$ are arbitrary, and \eqref{Eq_Omega} is independent of their choice.
\end{minipage}

\medskip We will denote the space of all $f\in\mathcal{SH}_L(D_\theta)$ which satisfy \eqref{Eq_Omega_estimate} by $\mathcal{SH}_L^0(D_\theta)$. If $f$ is also intrinsic, we will call the space $\mathcal{N}^0(D_\theta)$.
\end{defi}

\begin{defi}[Extended $\omega$-functional calculus]\label{defi_Extended}
Let $T\in\mathcal{K}(V)$ be bisectorial of angle $\omega\in(0,\frac{\pi}{2})$. Then, for every $f\in\mathcal{SH}_L(D_\theta)$, $\theta\in(\omega,\frac{\pi}{2})$, which admits the decomposition
\begin{equation}\label{Eq_Extended_decomposition}
f(s)=f_\infty+\frac{1}{1+s^2}(f_0-f_\infty)+\widetilde{f}(s),\qquad s\in D_\theta,
\end{equation}
for some $f_0,f_\infty\in\mathbb{R}_n$ and $\widetilde{f}\in\mathcal{SH}_L^0(D_\theta)$, we define the \textit{extended $\omega$-functional calculus}
\begin{equation}\label{Eq_Extended}
f(T):=f_\infty+(1+T^2)^{-1}(f_0-f_\infty)+\widetilde{f}(T).
\end{equation}
The space of functions $f\in\mathcal{SH}_L(D_\theta)$ which satisfy the decomposition \eqref{Eq_Extended_decomposition} is denoted by $\mathcal{SH}_L^\bnd(D_\theta)$. If $f$ is also intrinsic, we call the space $\mathcal{N}^\bnd(D_\theta)$.
\end{defi}

\begin{defi}[$H^\infty$-functional calculus]\label{defi_Hinfty}
Let $T\in\mathcal{K}(V)$ be bisectorial of angle $\omega\in(0,\frac{\pi}{2})$. Then for every $f\in\mathcal{SH}_L(D_\theta)$, $\theta\in(\omega,\frac{\pi}{2})$, for which there exists a function $e\in\mathcal{N}^\bnd(D_\theta)$ with the properties
\begin{equation*}
ef\in\mathcal{N}^\bnd(D_\theta)\qquad\text{and}\qquad e(T)\text{ is injective},
\end{equation*}
we define the \textit{$H^\infty$-functional calculus}
\begin{equation}\label{Eq_Hinfty}
f(T):=e(T)^{-1}(ef)(T).
\end{equation}
This definition is independent of the choice of the regularizer function $e$.
\end{defi}

\begin{rem}\label{rem_Polynomially_growing_functions}
Let $T\in\mathcal{K}(V)$ be bisectorial of angle $\omega\in(0,\frac{\pi}{2})$, and $f\in\mathcal{SH}_L(D_\theta)$, for some $\theta\in(\omega,\frac{\pi}{2})$. \medskip

\begin{enumerate}
\item[i)] If there exists some $f_0\in\mathbb{R}_n$ such that
\begin{equation*}
|f(s)-f_0|\leq C\max\{|s|^\gamma,|s|^\delta\},\qquad s\in D_\theta,
\end{equation*}
for some $0<\gamma\leq\delta$ and $C\geq 0$, then one can apply the $H^\infty$-functional calculus with the regularizer function $e(s)=\frac{1}{(1+s^2)^n}$, for some $n>\frac{\delta}{2}$.

\item[ii)] If the operator $T$ is injective, we can also allow the function $f$ to be polynomially growing at $0$ and at $\infty$, i.e.,
\begin{equation*}
|f(s)|\leq C\Big(\frac{1}{|s|^\gamma}+|s|^\gamma\Big),\qquad s\in D_\theta,
\end{equation*}
for some $\gamma>0$ and $C\geq 0$. Then we can apply the $H^\infty$-functional calculus with the regularizer function $e(s)=\frac{s^n}{(1+s^2)^n}$, for some $n>\gamma$.
\end{enumerate}

\end{rem}

\section{Fractional powers of bi-sectorial operators}\label{sec_Fractional_powers}

In this section, we define and analyse two versions of fractional powers of bisectorial operators $T$ with real  exponent.
More precisely, with the functions in \eqref{Eq_palpha_qalpha}, we investigate the operators $p_\alpha(T)$ and $q_\alpha(T)$ for every $\alpha\in\mathbb{R}$. Since the logarithm $\ln(s)$ in \eqref{Eq_ln} and hence the fractional power $s^\alpha$ in \eqref{Eq_salpha} are intrinsic functions for $\Sc(s)>0$, both functions
\begin{equation*}
p_\alpha(s)\text{ and }q_\alpha(s)\text{ are intrinsic for }\Sc(s)\neq 0.
\end{equation*}
Moreover, the absolute values of $p_\alpha$ and $q_\alpha$ are given by
\begin{equation*}
|p_\alpha(s)|=|q_\alpha(s)|=|s|^\alpha,\qquad\Sc(s)\neq 0.
\end{equation*}
This shows that the functions $p_\alpha,q_\alpha$ are polynomially growing, and it will be possible to give meaning to $p_\alpha(T)$ and $q_\alpha(T)$ via the $H^\infty$-functional calculus, Definition~\ref{defi_Hinfty}. \medskip

More precisely, for $\alpha>0$ the functions $p_\alpha$ and $q_\alpha$ are vanishing as
$s\rightarrow 0$ and are polynomially growing as $s\rightarrow\infty$. In this case, $p_\alpha(T)$ and $q_\alpha(T)$ can be defined for every bisectorial operator $T$, see Definition~\ref{defi_palphaT_positive}. Conversely, for $\alpha\leq 0$, both functions $p_\alpha$ and $q_\alpha$ are not vanishing as $s\rightarrow 0$, they are bounded for $\alpha=0$ or even polynomially growing for $\alpha<0$. Hence, we need to regularize the function at $s=0$, which makes it necessary to assume $T$ injective, see Definition~\ref{defi_palphaT_negative}. \medskip

For this reason, we will split the considerations of positive and negative powers into the two distinct Subsection~\ref{sec_Positive_fractional_powers} and Subsection~\ref{sec_Negative_fractional_powers}.

\subsection{Positive fractional powers}\label{sec_Positive_fractional_powers}

In this subsection we define the operators $p_\alpha(T)$ and $q_\alpha(T)$ for positive powers $\alpha>0$, for bisectorial operators $T\in\mathcal{K}(V)$, without the need of $T$ being injective. As central properties, we will find the product rule in Theorem~\ref{thm_Product_rule_positive_powers} as well as the composition rule of fractional powers in Theorem~\ref{thm_Composition_positive_powers}.

\begin{defi}\label{defi_palphaT_positive}
Let $T\in\mathcal{K}(V)$ be bisectorial. Then for every $\alpha>0$, we define
\begin{equation*}
p_\alpha(T)\text{ and }q_\alpha(T)\text{ via the }H^\infty\text{-functional calculus \eqref{Eq_Hinfty}}.
\end{equation*}
A regularizer, is for every integer $n>\frac{\alpha}{2}$, is given by
\begin{equation}\label{Eq_Regularizer_positive}
e(s)=\frac{1}{(1+s^2)^n}.
\end{equation}
\end{defi}

\begin{rem}
Note that the 'trivial' case $\alpha=0$ is not covered by Definition~\ref{defi_palphaT_positive}. Since the functions $p_0(s)$ and $q_0(s)$ do not decay at zero but are only bounded, the definition of $p_0(T)$ and $q_0(T)$ is subtle. \medskip

On the one hand, the function $q_0(s)=1$ in \eqref{Eq_palpha_qalpha} is the constant function. This means, also the operator
\begin{equation*}
q_0(T)=1
\end{equation*}
can be realized as the identity operator, with the help of the extended $\omega$-functional calculus \eqref{Eq_Extended}. On the other hand, the function $p_0(s)=\sgn(s)$ in \eqref{Eq_palpha_qalpha} reduces to the sign function (understood as the sign function of the scalar part of $s$). Hence, it is constant on both sectors, but the respective limit $s\rightarrow 0$ and $s\rightarrow\infty$ depend from which direction one approaches $0$ and $\infty$. Hence, the extended $\omega$-functional calculus is not applicable for $p_0$, and we need to use the full $H^\infty$-functional calculus and regularize the function at $0$ as well as at $\infty$. This in particular means that we need to assume that $T$ is injective in order to define
\begin{equation*}
p_0(T)=\sgn(T),
\end{equation*}
see Definition~\ref{defi_palphaT_negative}. We also a priori do not know if the resulting operator is bounded. Only if we additionally assume that the operator $T$ satsify quadratic estimates, see \cite{CMS26}, we know that $p_0(T)$ is a bounded operator.
\end{rem}

\begin{rem}\label{rem_Integer_powers}
Consider the special case $\alpha=n$ being an odd positive integer, or $\alpha=m$ being an even positive integer. Then there is $p_n(s)=s^n$ and $q_m(s)=s^m$, which by \cite[Theorem~5.9]{MS24} means
\begin{equation*}
p_n(T)=T^n,\qquad\text{and}\qquad q_m(T)=T^m,
\end{equation*}
in the sense that action as well as domains of both sides of the equation coincide.
\end{rem}

In order to find the first central property of fractional powers, the product rule in Theorem~\ref{thm_Product_rule_positive_powers}, we have to improve the well known product rule \cite[Theorem~5.7]{MS24} of the $H^\infty$-functional calculus, which in general only gives an operator inclusion. It is already known that if for example the second operator $f(T)$ is bounded, it is satisfied with equality. However, for both operators $f(T)$ and $g(T)$ being unbounded,
there is no existing result which ensures equality. Roughly speaking, the following Proposition~\ref{prop_Product_rule_positive} shows that for functions $f,g$ which, up to some constant, are both vanishing at $0$ and are polynomially bounded at $\infty$, and for which $1/g$ vanishes at $\infty$ and polynomially grows at $0$, the product rule holds with equality.

\begin{prop}\label{prop_Product_rule_positive}
Let $T\in\mathcal{K}(V)$ be bisectorial of angle $\omega\in(0,\frac{\pi}{2})$, and $f\in\mathcal{SH}_L(D_\theta)$, $g\in\mathcal{N}(D_\theta)$, for some $\theta\in(\omega,\frac{\pi}{2})$. If there exist $0<\gamma\leq\delta$, as well as constants $0<C_1\leq C_2$, such that for some $f_0\in\mathbb{R}_n$, $g_0\in\mathbb{R}$ there hold the estimates
\begin{subequations}
\begin{align}
C_1\min\{|s|^\delta,|s|^\gamma\}\leq|g(s)-g_0|\leq C_2\max\{|s|^\delta,|s|^\gamma\},\qquad s\in D_\theta, \label{Eq_Product_rule_positive_g} \\
|f(s)-f_0|\leq C_2\max\{|s|^\delta,|s|^\gamma\},\qquad s\in D_\theta. \label{Eq_Product_rule_positive_f}
\end{align}
\end{subequations}
Then, we have the product rule
\begin{equation*}
(gf)(T)=g(T)f(T).
\end{equation*}
\end{prop}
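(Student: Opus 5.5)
The plan is to get the inclusion $g(T)f(T)\subseteq(gf)(T)$ from the general product rule \cite[Theorem~5.7]{MS24}, and then prove the reverse domain inclusion $\dom((gf)(T))\subseteq\dom(g(T)f(T))$ by hand.

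\textbf{Step 1: the easy inclusion.} By Remark~\ref{rem_Polynomially_growing_functions} i), the estimates \eqref{Eq_Product_rule_positive_g} and \eqref{Eq_Product_rule_positive_f} make $f$, $g$ and $gf$ admissible for the $H^\infty$-calculus with a regularizer $e_m(s)=(1+s^2)^{-m}$. For $gf$ one writes
\begin{equation*}
gf-g_0f_0=(g-g_0)(f-f_0)+g_0(f-f_0)+(g-g_0)f_0,
\end{equation*}
which is bounded by $C\max\{|s|^{2\delta},|s|^\gamma\}$. Since $g$ is intrinsic, \cite[Theorem~5.7]{MS24} gives $g(T)f(T)\subseteq(gf)(T)$, and the two sides agree on $\dom(g(T)f(T))$.

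\textbf{Step 2: the reverse domain inclusion.} Take $v\in\dom((gf)(T))$. We must show $v\in\dom(f(T))$ and $f(T)v\in\dom(g(T))$. The idea is to factor $f$ through $g$. Set $h:=\frac{f-f_0}{g-g_0}$. The function $g-g_0$ is intrinsic and, by the lower bound in \eqref{Eq_Product_rule_positive_g}, nonvanishing on $D_\theta$. Hence $h\in\mathcal{SH}_L(D_\theta)$ and
\begin{equation*}
|h(s)|\leq\tfrac{C_2}{C_1}\max\{|s|^{\delta-\gamma},|s|^{\gamma-\delta}\}.
\end{equation*}
The function $\frac1{g-g_0}$ may blow up at $0$ like $|s|^{-\delta}$. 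I do not want to assume $T$ injective, so I would avoid applying the calculus to $\frac1{g-g_0}$ alone. Instead I would work with regularized products that are bounded: $\frac{s^{2k}}{(1+s^2)^{k}}\frac1{g-g_0}$ for large $k$, and $e_m f$, $e_m g$, $e_m gf$, all in $\mathcal{SH}_L^\bnd$.

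The key identity is
\begin{equation*}
f=f_0+h(g-g_0),
\end{equation*}
which by Step 1 yields $f(T)\supseteq f_0+h(T)(g(T)-g_0)$. I would also use the decomposition $f(T)v=f_0v+(f-f_0)(T)v$ and the analogous one for $gf$.

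Concretely, for $v\in\dom((gf)(T))$ I would show $e_m(T)^{-1}$-type identities. The vector $(e_m g f)(T)v$ lies in $\ran(e_m(T))$, and the bounded operators $(e_m f)(T)$, $(e_m g)(T)$ and $e_m(T)$ commute. From this I want to conclude that $(e_mf)(T)v\in\ran(e_m(T))$, i.e.\ $v\in\dom(f(T))$. Then applying the same argument to $w=f(T)v$ gives $(e_mg)(T)w=(e_mgf)(T)v\in\ran(e_m(T))$, i.e.\ $w\in\dom(g(T))$.

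\textbf{Main obstacle.} The hard part is exactly this range statement: that $(e_m f)(T)v\in\ran(e_m(T))$ whenever $(e_m gf)(T)v\in\ran(e_m(T))$. I would split $v$ using the bounded projection-like operators $P:=\frac{1}{1+T^2}$ (the part near $\infty$) and $1-P=T^2(1+T^2)^{-1}$ (the part near $0$).

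Near $0$, the functions $e_m f$ and $e_m g f$ are bounded modulo constants. So the contribution of $P^{k}v$ lies in $\dom(f(T))$ automatically, because $e_m f P^k\in\mathcal{SH}_L^\bnd$ once $k\geq\delta/2$.

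Near $\infty$, we use that $g$ is comparable to $|s|^\delta$. This means $f\,e_m\,(1-P)^k=\big[h\,(1-P)^{k}\frac{1}{1+g^{?}}\big]\cdot e_m g$, where the bracket is a bounded function. The plan is to write
\begin{equation*}
e_m f=\phi\cdot e_m(g f-g_0f_0)+\psi,
\end{equation*}
with $\phi=\frac{(1-P)^k}{g}$-type and $\psi$ both in $\mathcal{N}^\bnd$ or $\mathcal{SH}_L^\bnd$, using the lower bound on $|g-g_0|$. Then the bounded operator $\phi(T)$ maps $\ran(e_m(T))$ into itself, since it commutes with $e_m(T)$. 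Justifying these decompositions, including the case $g_0\neq0$ and the precise choice of $k$ and $m$, is where I expect the real work to lie.
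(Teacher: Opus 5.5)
Your reduction is correct and is the one the paper uses. By \cite[Theorem~5.7]{MS24} one has $g(T)f(T)\subseteq(gf)(T)$ together with $\dom((gf)(T))\cap\dom(f(T))=\dom(g(T)f(T))$, so everything reduces to $\dom((gf)(T))\subseteq\dom(f(T))$. You also found the right auxiliary function $\phi=\frac{s^{2k}}{(1+s^2)^kg}$ and the right mechanism: $\phi(T)$ is bounded and commutes with $e_m(T)$, hence preserves $\ran(e_m(T))$. However, the proof stops exactly at the decisive step, and the additive decomposition you sketch does not close it. For $g_0=0$, the identity $e_mf=\phi\,e_mgf+\psi$ forces $\psi=e_m\chi$ with $\chi=\frac{(1+s^2)^k-s^{2k}}{(1+s^2)^k}f$. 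That $\psi(T)$ is bounded is not the point. You need $\psi(T)v\in\ran(e_m(T))$, i.e.\ $v\in\dom(\chi(T))$, and $\chi$ still grows like $|s|^{\delta-2}$ at $\infty$. So for $\delta>2$ this is a problem of the same type as the original one; it could only be rescued by an induction on the growth exponent, which you have not set up. Also, $P^k+(1-P)^k\neq1$ for $k\geq2$, so your splitting of $v$ is not a partition of unity.

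The missing idea is that no additive remainder is needed, because the identity is exactly multiplicative: $\phi\,e_mgf=\frac{s^{2k}}{(1+s^2)^k}e_mf$ (with $m>\delta$, $2k>\delta$, $g_0=0$). Hence
\[
T^{2k}(1+T^2)^{-k}(e_mf)(T)v=\phi(T)(e_mgf)(T)v\in\ran(e_m(T))=\dom(T^{2m}),
\]
and the factor $T^{2k}(1+T^2)^{-k}$ can simply be peeled off. Indeed, $y:=(1+T^2)^{-k}(e_mf)(T)v$ satisfies $T^{2k}y\in\dom(T^{2m})$, so $y\in\dom(T^{2k+2m})$ and $(e_mf)(T)v=(1+T^2)^ky\in\dom(T^{2m})$, i.e.\ $v\in\dom(f(T))$. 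The zero of $s^{2k}$ at the origin costs nothing, because membership in $\dom(T^{2m})$ only sees the behaviour at $\infty$; no injectivity is involved. This is precisely the paper's argument, there with $e=(1+s^2)^{-n}$, $n>\frac{\delta}{2}$, regularizer $e^2$ for $gf$, and $\phi=\frac{s^{2n}e}{g}$.

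As for $g_0$: the paper divides by $g$ itself, so it tacitly uses the lower bound for $|g|$, i.e.\ the case $g_0=0$, which is all that is needed for $p_\alpha,q_\alpha$. Your concern is legitimate, since for $g_0\neq0$ the function $g$ may vanish in $D_\theta$. That case can be handled as follows. Take $\phi=\frac{s^{2k}}{(1+s^2)^k(g-g_0)}$ and note that $v\in\dom((g\phi^jf)(T))$ for all $j$. Then run a finite downward induction on $j$ for the statement $v\in\dom((\phi^jf)(T))$. Start from a $j$ with $\delta<j\gamma$, where $\phi^jf\in\mathcal{SH}_L^0(D_\theta)$. The inductive step uses $g\phi^jf=\frac{s^{2k}}{(1+s^2)^k}\phi^{j-1}f+g_0\phi^jf$ together with the same peeling step.
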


\begin{proof}
From the product rule \cite[Theorem~5.7]{MS24} of the $H^\infty$-functional calculus, we already know the operator inclusion
\begin{equation}\label{Eq_Product_rule_positive_3}
(gf)(T)\supseteq g(T)f(T),
\end{equation}
with the domains connected by
\begin{equation*}
\dom((gf)(T))\cap\dom(f(T))=\dom(g(T)f(T)).
\end{equation*}
This means, in order to show equality in \eqref{Eq_Product_rule_positive_3}, we have to verify
\begin{equation}\label{Eq_Product_rule_positive_2}
\dom((gf)(T))\subseteq\dom(f(T)).
\end{equation}
To do so, let $v\in\dom((gf)(T))$. Due to the upper bounds \eqref{Eq_Product_rule_positive_g} and \eqref{Eq_Product_rule_positive_f}, it is possible to consider the regularizer function $e$ in \eqref{Eq_Regularizer_positive}, with $n>\frac{\delta}{2}$, for both functions $g$ and $f$, see also Remark~\ref{rem_Polynomially_growing_functions}. Then $e^2$ is a regularizer of the product $gf$ and by the definition \eqref{Eq_Hinfty} of the $H^\infty$-functional calculus, the condition $v\in\dom((gf)(T))$ translates into
\begin{equation*}
(e^2gf)(T)v\in\dom(e^2(T)^{-1})=\ran(e^2(T))=\ran\big((1+T^2)^{-2n}\big)=\dom(T^{4n}).
\end{equation*}
Next, due to the lower bound \eqref{Eq_Product_rule_positive_g}, and by the choice $n>\frac{\delta}{2}$, there is
$$\frac{s^{2n}e}{g}\in\mathcal{N}^0(D_\theta),
$$ a function for which we can apply the $\omega$-functional calculus. From the operator inclusion in \cite[Corollary~3.18~ii)]{MS24}, it then follows that also
\begin{equation*}
\Big(\frac{s^{2n}e}{g}\Big)(T)(e^2gf)(T)v\in\dom(T^{4n}).
\end{equation*}
However, with the product rule \cite[Theorem~4.7]{MS24} of the extended $\omega$-functional calculus, we can rewrite this expression as
\begin{equation*}
\Big(\frac{s^{2n}e}{g}\Big)(T)(e^2gf)(T)v=(s^{2n}e^3f)(T)v=(s^{2n}e^2)(T)(ef)(T)v.
\end{equation*}
Altogether, this means that
$$
(s^{2n}e^2)(T)(ef)(T)v\in\dom(T^{4n}),
$$
 and because we can explicitly write
 $$
 (s^{2n}e^2)(T)=T^{2n}(1+T^2)^{-2n},
 $$
  this implies
\begin{equation*}
(ef)(T)v\in\dom(T^{2n})=\ran(e(T))=\dom(e(T)^{-1}).
\end{equation*}
By the definition \eqref{Eq_Hinfty} of the $H^\infty$-functional calculus, this proves $v\in\dom(f(T))$, and we have verified the domain inclusion \eqref{Eq_Product_rule_positive_2}.
\end{proof}

The product rule of Proposition~\ref{prop_Product_rule_positive} can now be applied to the functions $p_\alpha$ and $q_\alpha$, to obtain the following product rule of fractional powers.

\begin{thm}\label{thm_Product_rule_positive_powers}
Let $T\in\mathcal{K}(V)$ be bisectorial. Then for $\alpha,\beta>0$ we have the power rules

\begin{enumerate}
\item[i)] $p_{\alpha+\beta}(T)=p_\alpha(T)q_\beta(T)=q_\alpha(T)p_\beta(T)$; \medskip

\item[ii)] $q_{\alpha+\beta}(T)=p_\alpha(T)p_\beta(T)=q_\alpha(T)q_\beta(T)$.
\end{enumerate}
\end{thm}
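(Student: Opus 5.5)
The plan is to apply Proposition~\ref{prop_Product_rule_positive} directly, after checking the pointwise identities between the functions $p_\alpha$ and $q_\alpha$ in \eqref{Eq_palpha_qalpha}. First I verify, separately on the two half-spaces $\Sc(s)>0$ and $\Sc(s)<0$, the identities
\begin{equation*}
p_\alpha(s)q_\beta(s)=q_\alpha(s)p_\beta(s)=p_{\alpha+\beta}(s),\qquad p_\alpha(s)p_\beta(s)=q_\alpha(s)q_\beta(s)=q_{\alpha+\beta}(s),\qquad s\in D_\theta.
\end{equation*}
For $\Sc(s)>0$ all functions are $s^\gamma$. Since $\ln(s)$ in \eqref{Eq_ln} lies in the same slice $\mathbb{C}_J$ as $s$, we get $s^\alpha s^\beta=e^{\alpha\ln(s)}e^{\beta\ln(s)}=e^{(\alpha+\beta)\ln(s)}=s^{\alpha+\beta}$, because exponentials of elements of one complex plane $\mathbb{C}_J$ commute. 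For $\Sc(s)<0$ the same computation for $-s$ gives the result up to signs. The signs are $(-1)(+1)=-1$ for the $p\cdot q$ products and $(-1)(-1)=(+1)(+1)=+1$ for the $p\cdot p$ and $q\cdot q$ products. This matches the definitions of $p_{\alpha+\beta}$ and $q_{\alpha+\beta}$.

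Next I check the hypotheses of Proposition~\ref{prop_Product_rule_positive}. In each of the four products I take the left factor as $g$ and the right factor as $f$, so for instance $g=p_\alpha$, $f=q_\beta$ in the first case. Every $p_\gamma$ and $q_\gamma$ is intrinsic, so $g\in\mathcal{N}(D_\theta)$ and $f\in\mathcal{SH}_L(D_\theta)$. I choose $f_0=g_0=0$. Since $|p_\gamma(s)|=|q_\gamma(s)|=|s|^\gamma$, the estimates \eqref{Eq_Product_rule_positive_g} and \eqref{Eq_Product_rule_positive_f} hold with exponents $\min\{\alpha,\beta\}\le\max\{\alpha,\beta\}$ and constants $C_1=C_2=1$. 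Indeed $|s|^\alpha$ lies between $\min\{|s|^{\gamma},|s|^{\delta}\}$ and $\max\{|s|^{\gamma},|s|^{\delta}\}$ whenever $\gamma\le\alpha\le\delta$.

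Proposition~\ref{prop_Product_rule_positive} then gives $(gf)(T)=g(T)f(T)$. Combined with the pointwise identities, this yields for example $p_{\alpha+\beta}(T)=(p_\alpha q_\beta)(T)=p_\alpha(T)q_\beta(T)$. The remaining three equalities follow the same way, with the roles of $g$ and $f$ chosen as needed.

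The main subtlety is that $p_{\alpha+\beta}(T)$ and $(p_\alpha q_\beta)(T)$ must be the same operator, including domains. This holds because they are the $H^\infty$-calculus of the same function, and Definition~\ref{defi_Hinfty} is independent of the regularizer. The other step needing care is confirming that $s^\alpha s^\beta=s^{\alpha+\beta}$ in the noncommutative algebra $\mathbb{R}_n$. The slicewise argument above handles this.
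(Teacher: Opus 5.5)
Your proposal is correct and is essentially the paper's proof. The paper likewise records the pointwise identities \eqref{Eq_Product_rule_positive_powers_1} for $\Sc(s)\neq 0$ and concludes from Proposition~\ref{prop_Product_rule_positive}, and your choice $f_0=g_0=0$, $\gamma=\min\{\alpha,\beta\}$, $\delta=\max\{\alpha,\beta\}$, $C_1=C_2=1$ just spells out the hypothesis check the paper leaves implicit.
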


\begin{proof}
It is immediate that for every $\Sc(s)\neq 0$, there is
\begin{equation}\label{Eq_Product_rule_positive_powers_1}
\begin{split}
p_{\alpha+\beta}(s)=&p_\alpha(s)q_\beta(s)=q_\alpha(s)p_\beta(s), \\
q_{\alpha+\beta}(s)=&p_\alpha(s)p_\beta(s)=q_\alpha(s)q_\beta(s).
\end{split}
\end{equation}
The stated product rules are then a consequence of Proposition~\ref{prop_Product_rule_positive}.
\end{proof}

This product rule of fractional powers can now also be used to get a domain inclusion in between different order fractional powers.

\begin{cor}\label{cor_Operator_domain_positive_powers}
Let $T\in\mathcal{K}(V)$ be bisectorial. Then for every $\beta>\alpha>0$, we have

\begin{enumerate}
\item[i)] $\dom(p_\beta(T))\subseteq\dom(p_\alpha(T))\cap\dom(q_\alpha(T))$; \medskip

\item[ii)] $\dom(q_\beta(T))\subseteq\dom(p_\alpha(T))\cap\dom(q_\alpha(T))$.
\end{enumerate}
\end{cor}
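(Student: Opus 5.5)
We write $\beta=\alpha+\gamma$ with $\gamma:=\beta-\alpha>0$ and apply the power rules of Theorem~\ref{thm_Product_rule_positive_powers}, which hold with equality of domains. For i), Theorem~\ref{thm_Product_rule_positive_powers}~i) with exponents $\gamma$ and $\alpha$ gives
\begin{equation*}
p_\beta(T)=p_{\gamma+\alpha}(T)=q_\gamma(T)p_\alpha(T)=p_\gamma(T)q_\alpha(T).
\end{equation*}
The domain of a product $A B$ of unbounded operators is $\{v\in\dom(B)\mid Bv\in\dom(A)\}\subseteq\dom(B)$. The first factorization therefore yields $\dom(p_\beta(T))\subseteq\dom(p_\alpha(T))$, and the second yields $\dom(p_\beta(T))\subseteq\dom(q_\alpha(T))$.

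For ii) we proceed in the same way using Theorem~\ref{thm_Product_rule_positive_powers}~ii):
\begin{equation*}
q_\beta(T)=q_{\gamma+\alpha}(T)=p_\gamma(T)p_\alpha(T)=q_\gamma(T)q_\alpha(T).
\end{equation*}
This gives $\dom(q_\beta(T))\subseteq\dom(p_\alpha(T))$ and $\dom(q_\beta(T))\subseteq\dom(q_\alpha(T))$.

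The only point needing care is that these are genuine operator equalities, including domains, and not merely the inclusion $(gf)(T)\supseteq g(T)f(T)$ from \cite[Theorem~5.7]{MS24}. The inclusion alone would give no information about $\dom(f(T))$. Equality is exactly what Proposition~\ref{prop_Product_rule_positive} provides, and it applies with $g=p_\gamma$ or $q_\gamma$ (intrinsic, $g_0=0$, $|g(s)|=|s|^\gamma$) and $f=p_\alpha$ or $q_\alpha$ (with $f_0=0$, $|f(s)|=|s|^\alpha$). The required two-sided bounds then hold with the exponent pair $\min\{\alpha,\gamma\}\leq\max\{\alpha,\gamma\}$, since $\min\{|s|^{\delta},|s|^{\gamma'}\}\leq|s|^{\alpha},|s|^\gamma\leq\max\{|s|^{\delta},|s|^{\gamma'}\}$ for any exponents between $\gamma'$ and $\delta$. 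So the ordering of the factors must be chosen so that the second factor, whose domain we read off, is the one with exponent $\alpha$.
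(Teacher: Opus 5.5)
Your proof is correct and is exactly the argument the paper intends. The paper states the corollary without a written proof, as an immediate consequence of the power rule in Theorem~\ref{thm_Product_rule_positive_powers}: apply it with exponents $\beta-\alpha$ and $\alpha$, so that the factor with exponent $\alpha$ stands on the right, and then use $\dom(AB)\subseteq\dom(B)$. Your check that Proposition~\ref{prop_Product_rule_positive} applies, with exponent pair $\min\{\alpha,\beta-\alpha\}\leq\max\{\alpha,\beta-\alpha\}$, correctly explains why equality of domains is available here, rather than only the inclusion from the general product rule.
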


\begin{rem}
Note that in the domain inclusions of Corollary~\ref{cor_Operator_domain_positive_powers} we really need $\beta>\alpha$. In the case $\beta=\alpha$, we in general have neither inclusion
\begin{equation*}
\dom(p_\alpha(T))\substack{\not\subset \\ \not\supset}\dom(q_\alpha(T)).
\end{equation*}
\end{rem}

\begin{thm}\label{thm_palpha_bisectorial_positive}
Let $T\in\mathcal{K}(V)$ be bisectorial of angle $\omega\in(0,\frac{\pi}{2})$.

\begin{enumerate}
\item[i)] If $0<\alpha<\frac{\pi}{2\omega}$, then $p_\alpha(T)$ is bisectorial of angle $\alpha\omega$. \medskip

\item[ii)] If $0<\alpha<\frac{\pi}{\omega}$, then $q_\alpha(T)$ is sectorial of angle $\alpha\omega$.
\end{enumerate}
\end{thm}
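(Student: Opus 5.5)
The plan is to prove both statements through an explicit formula for the $S$-resolvent of $p_\alpha(T)$ and $q_\alpha(T)$, combined with a composition rule for the $H^\infty$-functional calculus. First I check closedness. Since $p_\alpha(T)$ and $q_\alpha(T)$ are defined via \eqref{Eq_Hinfty} as $e(T)^{-1}(ep_\alpha)(T)$ with $e(T)$ bounded and $(ep_\alpha)(T)$ bounded, both operators are closed, so they lie in $\mathcal{K}(V)$.

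\textbf{Location of the spectrum.} The map $s\mapsto p_\alpha(s)$ sends $D_\varphi\cap\mathbb{C}_J$ into $D_{\alpha\varphi}\cap\mathbb{C}_J$. Indeed, on the right sector the angle is multiplied by $\alpha$. On the left sector $p_\alpha(s)=-(-s)^\alpha$, so the angle measured from the negative axis is also multiplied by $\alpha$. Similarly, $q_\alpha$ sends $D_\varphi$ into the single sector $S_{\alpha\varphi}=\{re^{J\phi}:|\phi|<\alpha\varphi\}$. The hypotheses $\alpha\omega<\frac{\pi}{2}$ and $\alpha\omega<\pi$ are exactly what makes these images proper (double) sectors.

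Now fix $\lambda\notin\overline{D_{\alpha\varphi'}}$, with $\varphi'\in(\omega,\varphi)$ and $\lambda\neq0$. The intrinsic function
\begin{equation*}
h_\lambda(s):=Q_\lambda(p_\alpha(s))^{-1}=\bigl(p_\alpha(s)^2-2\lambda_0p_\alpha(s)+|\lambda|^2\bigr)^{-1}
\end{equation*}
is slice hyperholomorphic on $D_\theta$ for $\theta$ slightly larger than $\omega$, because the denominator does not vanish there. Since it behaves like $|\lambda|^{-2}$ at $0$ and decays at $\infty$, we have $h_\lambda\in\mathcal{N}^\bnd(D_\theta)$, so $h_\lambda(T)$ is bounded by the extended calculus.

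By the product rule (Theorem~\ref{thm_Product_rule_positive_powers}, Proposition~\ref{prop_Product_rule_positive}), $Q_\lambda[p_\alpha(T)]=p_\alpha(T)^2-2\lambda_0p_\alpha(T)+|\lambda|^2$ coincides with $(p_\alpha^2-2\lambda_0p_\alpha+|\lambda|^2)(T)$, including domains. The product rule with the bounded factor then gives
\begin{equation*}
Q_\lambda[p_\alpha(T)]\,h_\lambda(T)=1\qquad\text{and}\qquad h_\lambda(T)\,Q_\lambda[p_\alpha(T)]\subseteq1.
\end{equation*}
Hence $\lambda\in\rho_S(p_\alpha(T))$. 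Writing the $S$-resolvent as the calculus applied to the function $S_L^{-1}(\lambda,p_\alpha(s))$ also yields
\begin{equation*}
S_L^{-1}(\lambda,p_\alpha(T))=\bigl(S_L^{-1}(\lambda,p_\alpha(\cdot))\bigr)(T).
\end{equation*}
The case $q_\alpha$ is identical, using $q_\alpha^2=q_{2\alpha}$.

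\textbf{Resolvent estimate.} I need $\|S_L^{-1}(\lambda,p_\alpha(T))\|\le C/|\lambda|$. For this I use scaling: the function $g_\lambda(s):=\lambda S_L^{-1}(\lambda,p_\alpha(s))$ is a function of $s/|\lambda|^{1/\alpha}$ alone, up to a fixed rotation within the admissible class. I write $g_\lambda$ as a constant plus a multiple of $(1+s^2)^{-1}$ plus a $\mathcal{N}^0$-part, and estimate the $\omega$-calculus integral \eqref{Eq_Omega} directly using $\|S_L^{-1}(s,T)\|\le C/|s|$. After the substitution $s=|\lambda|^{1/\alpha}t$, the integral becomes $\int|g(t)|\,\frac{dt}{|t|}$ for a function $g$ that decays at $0$ and at $\infty$ uniformly in the direction of $\lambda$ outside $D_{\alpha\varphi}$. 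This yields a bound independent of $|\lambda|$.

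The main obstacle is that the decomposition constants $f_0$, $f_\infty$ of the extended calculus depend on $\lambda$. To avoid this, I would instead subtract $\frac{1}{\lambda}\cdot\frac{\lambda^2}{\lambda^2+p_\alpha(s)^2}$-type terms, or work with $S_L^{-1}(\lambda,p_\alpha(s))-\lambda^{-1}\frac{1}{1+(s/|\lambda|^{1/\alpha})^2}$. The subtracted term is a scaled resolvent of $T^2$, whose norm bound follows from the bisectoriality of $T$. The remainder is then handled by the scaled $\omega$-calculus estimate, with the rotational uniformity controlled by compactness of the angle range.
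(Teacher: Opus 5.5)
Your proposal is correct and follows essentially the same route as the paper. You invert $Q_\lambda[p_\alpha(T)]$ via $h_\lambda=(p_\alpha^2-2\lambda_0p_\alpha+|\lambda|^2)^{-1}\in\mathcal{N}^\bnd$ together with the equality case of the product rule (Proposition~\ref{prop_Product_rule_positive}). You then split $S_L^{-1}(\lambda,p_\alpha(T))$ into $(|\lambda|^{2/\alpha}+T^2)^{-1}|\lambda|^{2/\alpha}\lambda^{-1}$, bounded through $Q_{|\lambda|^{1/\alpha}J}[T]^{-1}$, plus a remainder estimated by the scale-invariant $\omega$-calculus integral after substituting $\xi=|\lambda|^{1/\alpha}t$. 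Of your two proposed subtractions, keep only the second one, which is exactly the paper's: the denominator $\lambda^2+p_\alpha(s)^2$ of the first can vanish on $D_\theta$, e.g.\ for $\lambda$ purely imaginary and $s=|\lambda|^{1/\alpha}$ real.
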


\begin{proof}
i)\;\;Let us fix $\varphi\in(\alpha\omega,\frac{\pi}{2})$. Then, for every $s\in\mathbb{R}^{n+1}\setminus(D_\varphi\cup\{0\})$, we are able to define
\begin{equation}\label{Eq_palpha_bisectorial_6}
h_s(\xi):=\frac{1}{p_\alpha(\xi)^2-2s_0p_\alpha(\xi)+|s|^2},\qquad\xi\in\overline{D_{\varphi'}},
\end{equation}
for every $\omega<\varphi'<\max\{\frac{\varphi}{\alpha},\frac{\pi}{2}\}$. Note that the denominator never vanishes, because
\begin{equation*}
\arg(p_\alpha(\xi))=\begin{cases} \alpha\arg(\xi), & \Sc(\xi)>0, \\ \alpha(\arg(\xi)-\pi)+\pi, & \Sc(\xi)<0, \end{cases}\qquad\xi\in\overline{D_{\varphi'}},
\end{equation*}
and hence $p_\alpha(\xi)\in\overline{D_{\alpha\varphi'}}\subseteq D_\varphi\cup\{0\}$, while $s\in\mathbb{R}^{n+1}\setminus(D_\varphi\cup\{0\})$. The convention of $\arg$, which is used here, is $\arg(\xi)\in(-\frac{\pi}{2},\frac{\pi}{2})\cup(\frac{\pi}{2},\frac{3\pi}{2})$, for every $\Sc(\xi)\neq 0$. Since $|p_\alpha(\xi)|=|\xi|^\alpha$, it is now clear that $h_s\in\mathcal{N}^\bnd(D_{\varphi'})$, with the values $h_{s,0}=\frac{1}{|s|^2}$ and $h_{s,\infty}=0$ in Definition~\ref{defi_Extended}. Hence, $h_s(T)$ is a well defined, bounded operator via the extended $\omega$-functional calculus of Definition~\ref{defi_Extended}. In order to show that $s\in\rho_S(p_\alpha(T))$, we have to show that the operator $$
Q_s[p_\alpha(T)]=p_\alpha(T)^2-2s_0p_\alpha(T)+|s|^2
$$
is boundedly invertible. By the product rule in Proposition~\ref{prop_Product_rule_positive}, we can write the polynomial functional calculus of $Q_s[p_\alpha(T)]$ as the $H^\infty$-functional calculus
\begin{equation*}
Q_s[p_\alpha(T)]=p_\alpha(T)(p_\alpha-2s_0)(T)+|s|^2=(p_\alpha(p_\alpha-2s_0))(T)+|s|^2=(p_\alpha^2-2s_0p_\alpha+|s|^2)(T).
\end{equation*}
Consequently, by the product rule \cite[Theorem~5.7]{MS24} of the $H^\infty$-functional calculus, we obtain
\begin{align*}
h_s(T)Q_s[p_\alpha(T)]&\subseteq\big(h_s(p_\alpha^2-2s_0p_\alpha+|s|^2)\big)(T)=1,\qquad\text{and also} \\
Q_s[p_\alpha(T)]h_s(T)&=\big((p_\alpha^2-2s_0p_\alpha+|s|^2)h_s\big)(T)=1.
\end{align*}
This shows that $Q_s[p_\alpha(T)]$ is bijective, with $Q_s[p_\alpha(T)]^{-1}=h_s(T)$ being a bounded operator. Hence we have proven $s\in\rho_S(p_\alpha(T))$. Since $s\in\mathbb{R}^{n+1}\setminus(D_\varphi\cup\{0\})$ was arbitrary, this means that $\sigma_S(p_\alpha(T))\subseteq D_\varphi\cup\{0\}$ for every $\varphi\in(\alpha\omega,\frac{\pi}{2})$. Taking the union over all $\varphi$, then gives
\begin{equation*}
\sigma_S(p_\alpha(T))\subseteq\overline{D_{\alpha\omega}}.
\end{equation*}
In order to get the estimate \eqref{Eq_SL_estimate} of the left $S$-resolvent operator $S_L^{-1}(s,p_\alpha(T))$, we first rewrite it as the extended $\omega$-functional calculus
\begin{equation}\label{Eq_palpha_bisectorial_5}
S_L^{-1}(s,p_\alpha(T))=h_s(T)\overline{s}-p_\alpha(T)h_s(T)=(h_s\overline{s}-p_\alpha h_s)(T),
\end{equation}
using the product rule \cite[Theorem~5.7]{MS24}, which is satisfied with equality because the operator $h_s(T)$ is bounded.
Since it is clear that $h_s\overline{s}-p_\alpha h_s\in\mathcal{N}^\bnd(D_{\varphi'})$ from Definition~\ref{defi_Extended}, with limit $\frac{1}{s}$ as $\Xi\rightarrow 0$ and limit $0$ when $\xi\rightarrow\infty$, the extended $\omega$-functional calculus \eqref{Eq_Extended} of $S_L^{-1}(s,p_\alpha(T))$ can be written as
\begin{equation}\label{Eq_palpha_bisectorial_3}
S_L^{-1}(s,p_\alpha(T))=(|s|^{\frac{2}{\alpha}}+T^2)^{-1}\frac{|s|^{\frac{2}{\alpha}}}{s}+\Big(\underbrace{h_s(\xi)\overline{s}-p_\alpha(\xi)h_s(\xi)-\frac{1}{|s|^{\frac{2}{\alpha}}+\xi^2}\frac{|s|^{\frac{2}{\alpha}}}{s}}_{=:g_s(\xi)}\Big)(T).
\end{equation}
Hence, the remaining task is to estimate the operator $g_s(T)$, defined via the $\omega$-functional calculus \eqref{Eq_Omega}. First, for every $\xi\in\overline{D_{\varphi'}} \cap \mathbb{C}_J$, where $J \in \mathbb{S}$ is an imaginary unit such that $s \in \mathbb{C}_J$, we can estimate
\begin{align}
|g_s(\xi)|&=\bigg|\frac{1}{s-p_\alpha(\xi)}-\frac{|s|^{\frac{2}{\alpha}}}{(|s|^{\frac{2}{\alpha}}+\xi^2)s}\bigg|\leq\frac{|s||\xi|^2+|s|^{\frac{2}{\alpha}}|\xi|^\alpha}{|s||s-p_\alpha(\xi)|||s|^{\frac{2}{\alpha}}+\xi^2|} \notag \\
&\leq\frac{4}{|e^{J\varphi}-e^{J\alpha\varphi'}||1+e^{2J\varphi'}|}\frac{|s||\xi|^2+|s|^{\frac{2}{\alpha}}|\xi|^\alpha}{|s|(|s|+|\xi|^\alpha)(|s|^{\frac{2}{\alpha}}+|\xi|^2)}, \label{Eq_palpha_bisectorial_4}
\end{align}
where in the first equality we used that $p_\alpha(\xi)\in\mathbb{C}_J$ is in the same complex plane as $s$, and hence the term $h_s(\xi)\overline{s}-p_\alpha(\xi)h_s(\xi)$ simplifies to $(s-p_\alpha(\xi))^{-1}$. Moreover, in the second line of \eqref{Eq_palpha_bisectorial_4}, we used
\begin{align}
\frac{|s-p_\alpha(\xi)|}{|s|+|\xi|^\alpha}&\geq\inf\limits_{\phi\in[\varphi,\frac{\pi}{2}]}\inf\limits_{\phi'\in[0,\varphi']}\frac{\big||s|e^{J\phi}-|\xi|^\alpha p_\alpha(e^{J\phi'})|}{|s|+|\xi|^\alpha} \notag \\
&\geq\inf\limits_{\phi\in[\varphi,\frac{\pi}{2}]}\inf\limits_{\phi'\in[0,\varphi']}\frac{\big|e^{J\phi}-e^{J\alpha\phi'}\big|}{2}=\frac{|e^{J\varphi}-e^{J\alpha\varphi'}|}{2}, \label{Eq_palpha_bisectorial_7}
\end{align}
where in the first inequality, the difference $|s-p_\alpha(\xi)|$ becomes minimal if $s$ and $\xi$ are in the same quadrant, and by symmetry we chose the first one. In the second inequality we used that the minimum with respect to $|s|>0$ and $|\xi|>0$ is attained for $|s|=|\xi|^\alpha$. In the second line of \eqref{Eq_palpha_bisectorial_4} we also used the similar inequality
\begin{equation}\label{Eq_palpha_bisectorial_8}
\frac{||s|^{\frac{2}{\alpha}}+\xi^2|}{|s|^{\frac{2}{\alpha}}+|\xi|^2}\geq\inf\limits_{\phi'\in[-\varphi',\varphi']}\frac{||s|^{\frac{2}{\alpha}}+|\xi|^2e^{2J\phi'}|}{|s|^{\frac{2}{\alpha}}+|\xi|^2}\geq\inf\limits_{\phi'\in[-\varphi',\varphi']}\frac{|1+e^{2J\phi'}|}{2}=\frac{|1+e^{2J\varphi'}|}{2}.
\end{equation}
Now that we have estimated the function $g_s$ in \eqref{Eq_palpha_bisectorial_4}, we can combine it with the bound \eqref{Eq_SL_estimate} of the $S$-resolvent of $T$, to obtain for every $s\in\mathbb{R}^{n+1}\setminus(D_\varphi\cup\{0\})$ a bound of the operator norm of $g_s(T)$. To do so, we estimate the $\omega$-functional calculus by
\begin{align}
\Vert g_s(T)\Vert&=\frac{1}{2\pi}\bigg\Vert\int_{\partial D_{\varphi'}\cap\mathbb{C}_J}S_L^{-1}(\xi,T)d\xi_Jg_s(\xi)\bigg\Vert \notag \\
&\leq\frac{8C_{\varphi'}}{\pi|e^{J\varphi}-e^{J\alpha\varphi'}||1+e^{2J\varphi'}|}\int_0^\infty\frac{|s|r^2+|s|^{\frac{2}{\alpha}}r^\alpha}{r|s|(|s|+r^\alpha)(|s|^{\frac{2}{\alpha}}+r^2)}dr \notag \\
&=\frac{8C_{\varphi'}}{\pi|e^{J\varphi}-e^{J\alpha\varphi'}||1+e^{2J\varphi'}||s|}\int_0^\infty\frac{r^2+r^\alpha}{r(1+r^\alpha)(1+r^2)}dr \notag \\
&=\frac{8(1+\frac{2}{\alpha})C_{\varphi'}}{\pi|e^{J\varphi}-e^{J\alpha\varphi'}||1+e^{2J\varphi'}||s|}, \label{Eq_palpha_bisectorial_1}
\end{align}
where in the third line we substituted $r\rightarrow|s|^{\frac{1}{\alpha}}r$, and in the last we used $1+r^\alpha\geq\max\{1,r^\alpha\}$, as well as $1+r^2\geq\max\{1,r^2\}$, in order to compute the integral explicitly. Using now the bound \eqref{Eq_SL_estimate} together with \cite[Lemma 3.2 ii)]{CMS25} for the first, and \eqref{Eq_palpha_bisectorial_1} for the second term in in \eqref{Eq_palpha_bisectorial_3}, we get
\begin{align*}
\Vert S_L^{-1}(s,p_\alpha(T))\Vert&\leq|s|^{\frac{2}{\alpha}-1}\Vert Q_{|s|^{\frac{1}{\alpha}}J}[T]^{-1}\Vert+\Vert g_s(T)\Vert \\
&\leq\bigg(2C_\varphi^2+\frac{8(1+\frac{2}{\alpha})C_{\varphi'}}{\pi|e^{J\varphi}-e^{J\alpha\varphi'}||1+e^{2J\varphi'}|}\bigg)\frac{1}{|s|}.
\end{align*}
ii)\;\;In order to show that $q_\alpha(T)$ is sectorial of angle $\alpha\omega$, one can basically do the same proof as for $p_\alpha(T)$. Instead of the function $h_s$ in \eqref{Eq_palpha_bisectorial_6}, we consider for some fixed $\varphi\in(\alpha\omega,\pi)$ and $s\in\mathbb{R}^{n+1}\setminus(S_\varphi\cup\{0\})$, the function
\begin{equation*}
h_s(\xi):=\frac{1}{q_\alpha(\xi)^2-2s_0q_\alpha(\xi)+|s|^2},\qquad\xi\in\overline{S_{\varphi'}},
\end{equation*}
for every $0<\varphi'<\max\{\frac{\varphi}{\alpha},\pi\}$. This function is well defined because
\begin{equation*}
\arg(q_\alpha(\xi))=\begin{cases} \alpha\arg(\xi), & \Sc(\xi)>0, \\ \alpha(\arg(\xi)-\pi), & \Sc(\xi)<0, \end{cases} \qquad\xi\in\overline{S_{\varphi'}},
\end{equation*}
and hence $q_\alpha(\xi)\in\overline{S_{\alpha\varphi'}}\subseteq S_\varphi$, while $s\in\mathbb{R}^{n+1}\setminus S_\varphi$.
\end{proof}

\begin{prop}
Let $T\in\mathcal{K}(V)$ be bisectorial. Then for every $\alpha\in(0,1]$, there is
\begin{equation*}
\ker(p_\alpha(T))=\ker(q_\alpha(T))=\ker(T).
\end{equation*}
\end{prop}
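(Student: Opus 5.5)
We prove the two inclusions $\ker(T)\subseteq\ker(p_\alpha(T))$ and $\ker(p_\alpha(T))\subseteq\ker(T)$, and likewise for $q_\alpha$, using only the product rules of the functional calculi. Throughout we work with the regularizer $e(s)=\frac{1}{(1+s^2)^n}$ from Definition~\ref{defi_palphaT_positive}, taking $n=1$ (admissible since $\alpha\leq 1<2$). For every $f\in\{p_\alpha,q_\alpha\}$ we then have
\begin{equation*}
f(T)=(1+T^2)\,(ef)(T),
\end{equation*}
with $ef\in\mathcal{N}^0(D_\theta)$, since $ef$ decays like $|s|^\alpha$ at $0$ and like $|s|^{\alpha-2}$ at $\infty$.

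\emph{Inclusion $\ker(T)\subseteq\ker(f(T))$.} Let $Tv=0$. Since $ef$ vanishes at $0$, the function $ef$ can be factored as $(ef)(s)=\frac{s}{1+s^2}\,k(s)$, where
\begin{equation*}
k(s)=\frac{(1+s^2)ef(s)}{s}=\frac{f(s)}{s}.
\end{equation*}
This $k$ is not in $\mathcal{N}^\bnd$, because it grows like $|s|^{\alpha-1}$ at $0$. We therefore factor the other way and write $ef=k\cdot\frac{s}{1+s^2}$ with the $k$-factor applied last. The product rule \cite[Theorem~5.7]{MS24} gives the inclusion
\begin{equation*}
k(T)\,\Big(\frac{s}{1+s^2}\Big)(T)\subseteq (ef)(T).
\end{equation*}
Here $\big(\frac{s}{1+s^2}\big)(T)v=(1+T^2)^{-1}Tv=0$, and $0\in\dom(k(T))$, so $(ef)(T)v=0$. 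Hence $v\in\dom(f(T))$ and $f(T)v=(1+T^2)\cdot 0=0$. Equivalently, one can evaluate $(ef)(T)v$ directly from the Cauchy integral \eqref{Eq_Omega}: on $\ker T$ one has $S_L^{-1}(s,T)v=v\,s^{-1}$, and the integral $\frac{1}{2\pi}\int_{\partial D_\varphi\cap\mathbb{C}_J} s^{-1}\,ds_J\,(ef)(s)$ equals zero, because $ef$ vanishes at $0$ and at $\infty$ and the two sector contours enclose no singularity. I would present the latter, cleaner argument.

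\emph{Inclusion $\ker(f(T))\subseteq\ker(T)$.} Let $f(T)v=0$. Then $(ef)(T)v=(1+T^2)^{-1}f(T)v=0$. The idea is to recover $T$ from $f$ by multiplying with $g:=\frac{s}{f}$, which by \eqref{Eq_palpha_qalpha} equals $s^{1-\alpha}$ or $\pm(-s)^{1-\alpha}$ on the two sectors. This is exactly where $\alpha\leq1$ is needed: $g$ is bounded at $0$ (it vanishes there, or is $\sgn$-like when $\alpha=1$) and grows like $|s|^{1-\alpha}$ at $\infty$. We use the bounded regularizer $\tilde e(s)=\frac{1}{1+s^2}$. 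Then
\begin{equation*}
\tilde e g\cdot ef=\frac{s}{(1+s^2)^2}\in\mathcal{N}^0,
\end{equation*}
and $\tilde e g\in\mathcal{N}^{\bnd}$ for $\alpha<1$. The product rule of the extended $\omega$-calculus \cite[Theorem~4.7]{MS24} gives
\begin{equation*}
(\tilde e g)(T)\,(ef)(T)v=T(1+T^2)^{-2}v=0,
\end{equation*}
and since $(1+T^2)^{-2}$ commutes with $T$ and is injective, we conclude $Tv=0$.

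The main obstacle is the endpoint $\alpha=1$ together with the membership of $\tilde e g$ in $\mathcal{N}^\bnd$. For $q_1$ and $\alpha=1$, $g=\sgn$ has different limits at $0$ along the two sectors, so $\tilde e g\notin\mathcal{N}^\bnd$. To circumvent this we multiply by an extra factor of $s$ and use $\tilde e(s)=\frac{s}{(1+s^2)^2}$ instead. Then $\tilde e g\in\mathcal{N}^0$ for all $\alpha\in(0,1]$, since it decays like $|s|^{2-\alpha}$ at $0$. This yields $T^2(1+T^2)^{-3}v=0$, so $T^2v=0$, and we need $\ker T^2=\ker T$ for bisectorial $T$. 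This last step follows from $\ker T\cap\ran T=\{0\}$: the element $w=Tv$ satisfies both $Tw=0$ and, by the resolvent bound \eqref{Eq_SL_estimate}, $\lim_{t\to0} t^2(t^2+T^2)^{-1}w=0$ for $w\in\ran T$, while $t^2(t^2+T^2)^{-1}w=w$ for $w\in\ker T$. Hence $w=0$. The case $p_1$ is immediate from Remark~\ref{rem_Integer_powers}, since $p_1(T)=T$.
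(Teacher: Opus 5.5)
Your argument for $\ker(f(T))\subseteq\ker(T)$ is essentially the paper's. The paper applies the bounded operator $\big(\frac{sp_{-\alpha}(s)}{1+s^2}\big)(T)=\big(\frac{q_{1-\alpha}(s)}{1+s^2}\big)(T)$ directly to $p_\alpha(T)v=0$ via the $H^\infty$ product rule \cite[Theorem~5.7]{MS24}. You first pass to $(ef)(T)v=e(T)f(T)v=0$ and then use only the extended $\omega$-calculus product rule with the same multiplier $\frac{1}{1+s^2}\cdot\frac{s}{f}$. Both end at $T(1+T^2)^{-k}v=0$.

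Where you genuinely go beyond the paper is the endpoint $q_1$. The paper writes out only $p_\alpha$ and calls $q_\alpha$ ``similar''. But the verbatim analogue would use $\frac{sq_{-\alpha}(s)}{1+s^2}=\frac{p_{1-\alpha}(s)}{1+s^2}$, which for $\alpha=1$ equals $\frac{\sgn(s)}{1+s^2}\notin\mathcal{N}^{\bnd}(D_\theta)$. Your extra factor $s$ gives $T^2(1+T^2)^{-3}v=0$. Combined with $\ker(T^2)=\ker(T)$, obtained from $\ker(T)\cap\ran(T)=\{0\}$ via $\Vert t^2TQ_{tJ}[T]^{-1}\Vert\leq Ct$ and $t^2Q_{tJ}[T]^{-1}w=w$ on $\ker(T)$, this closes the case correctly. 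Some step of this kind is needed: for instance $q_1(T)q_1(T)=q_2(T)=T^2$ also only yields $v\in\ker(T^2)$. In the write-up, make the domain step explicit: with $w:=(1+T^2)^{-3}v$ one has $T^2w=0$, hence $w=(1+T^2)^{-3}w$, and injectivity gives $v=w\in\ker(T^2)$.

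For $\ker(T)\subseteq\ker(f(T))$ the paper just cites \cite[Proposition~5.6]{MS24}. Drop your first attempt. The function $k=f/s$ is unbounded at $0$ for $\alpha<1$ (and equals $\sgn$ for $q_1$), so $k(T)$ needs a regularizer tending to $0$ at the origin. Such a regularizer annihilates $\ker(T)$, so $k(T)$ is undefined exactly when $\ker(T)\neq\{0\}$.

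The Cauchy-integral version works, but ``$S_L^{-1}(s,T)v=v\,s^{-1}$'' is not what is actually needed. In $S_L^{-1}(s,T)\,ds_J\,(ef)(s)$ the Clifford number $ds_J(ef)(s)$ acts on $v$ before the resolvent does, and you have not checked that $\ker(T)$ is stable under that multiplication. Since $ef$ is intrinsic, pair $s$ with $\overline{s}$ on the contour. This shows that only $Q_s[T]^{-1}$ and $TQ_s[T]^{-1}$ with real scalar weights survive. Then on $\ker(T)$ you get $\frac{1}{2\pi}\Sc\big(\int_{\partial D_\varphi\cap\mathbb{C}_J}s^{-1}ds_J(ef)(s)\big)v=0$, as you intended.
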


\begin{proof}
We will only consider $p_\alpha$, the proof for $q_\alpha$ is similar. The inclusion $\ker(T)\subseteq\ker(p_\alpha(T))$ is already proved in \cite[Proposition~5.6]{MS24}. For the inverse inclusion let $v\in\ker(p_\alpha(T))$. Since $\alpha\in(0,1]$, there is $\frac{sp_{-\alpha}(s)}{1+s^2}\in\mathcal{N}^\bnd(D_\theta)$, for any $\theta\in(0,\frac{\pi}{2})$. This means, the operator $(\frac{sp_{-\alpha}(s)}{1+s^2})(T)$ is defined via the extended $\omega$-functional calculus \eqref{Eq_Extended}, and with the product rule \cite[Theorem~5.7]{MS24} of the $H^\infty$-functional calculus, we get
\begin{equation*}
0=\Big(\frac{sp_{-\alpha}(s)}{1+s^2}\Big)(T)p_\alpha(T)v=\Big(\frac{sp_{-\alpha}(s)p_\alpha(s)}{1+s^2}\Big)(T)v=\Big(\frac{s}{1+s^2}\Big)(T)v=T(1+T^2)^{-1}v.
\end{equation*}
Applying now the operator $(1+T^2)$ onto this equation, gives $Tv=0$.
\end{proof}

\begin{prop}\label{prop_Composition_rule_not_injective}
Let $T\in\mathcal{K}(V)$ be bisectorial of angle $\omega\in(0,\frac{\pi}{2})$, and $0<\alpha<\frac{\pi}{2\omega}$. Consider a function $f\in\mathcal{SH}_L(D_\theta)$, with $\theta\in(\alpha\omega,\frac{\pi}{2})$, which for some $f_0\in\mathbb{R}_n$ satisfies the estimate
\begin{equation}\label{Eq_Composition_rule_not_injective}
|f(s)-f_0|\leq C\max\{|s|^\gamma,|s|^\delta\},\qquad s\in D_\theta,
\end{equation}
for some $0<\gamma\leq\delta$ and $C\geq 0$.  Then  $f\circ p_\alpha\in\mathcal{SH}_L(D_{\theta'})$, for every $\theta'<\min\{\frac{\theta}{\alpha},\frac{\pi}{2}\}$ and there holds \medskip

\begin{enumerate}
\item[i)] $f(p_\alpha(T))=(f\circ p_\alpha)(T)$, \medskip

\item[ii)] $f(q_\alpha(T))=(f\circ q_\alpha)(T)$.
\end{enumerate}
\end{prop}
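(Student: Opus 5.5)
We prove the composition rule in three layers, matching the three-step construction of the $H^\infty$-functional calculus.

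\emph{Holomorphy of the composition.} Since $p_\alpha$ is intrinsic and, by the argument computation in the proof of Theorem~\ref{thm_palpha_bisectorial_positive}, maps $D_{\theta'}$ into $D_{\alpha\theta'}\subseteq D_\theta$, the composition of a left slice hyperholomorphic function with an intrinsic one is again left slice hyperholomorphic. Hence $f\circ p_\alpha\in\mathcal{SH}_L(D_{\theta'})$. Moreover, $|p_\alpha(\xi)|=|\xi|^\alpha$ turns \eqref{Eq_Composition_rule_not_injective} into
\begin{equation*}
|f(p_\alpha(\xi))-f_0|\leq C\max\{|\xi|^{\alpha\gamma},|\xi|^{\alpha\delta}\}.
\end{equation*}
By Remark~\ref{rem_Polynomially_growing_functions}~i), both sides of i) are therefore defined via the $H^\infty$-functional calculus, where $p_\alpha(T)$ is bisectorial of angle $\alpha\omega$ by Theorem~\ref{thm_palpha_bisectorial_positive}.

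\emph{Step 1: the $\omega$-functional calculus.} Let $g\in\mathcal{SH}_L^0(D_\theta)$. In the integral \eqref{Eq_Omega} defining $g(p_\alpha(T))$, I insert the resolvent identity
\begin{equation*}
S_L^{-1}(s,p_\alpha(T))=(h_s\overline{s}-p_\alpha h_s)(T),
\end{equation*}
see \eqref{Eq_palpha_bisectorial_5}. This function is given by the extended $\omega$-calculus as an integral over $\partial D_{\varphi'}$. I then apply Fubini, which is justified by the decay of $g$ and by the bound \eqref{Eq_palpha_bisectorial_4}. It remains to evaluate the inner scalar integral
\begin{equation*}
\frac{1}{2\pi}\int_{\partial D_\varphi\cap\mathbb{C}_J}(h_s(\xi)\overline{s}-p_\alpha(\xi)h_s(\xi))\,ds_J\,g(s).
\end{equation*}
On each $\mathbb{C}_J$ the integrand reduces to $(s-p_\alpha(\xi))^{-1}g(s)$, and the $S$-resolvent equation/Cauchy formula (slicewise, with $\xi$ between the contours) gives exactly $g(p_\alpha(\xi))$. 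This yields
\begin{equation*}
g(p_\alpha(T))=(g\circ p_\alpha)(T).
\end{equation*}
The point needing care is the order of the noncommuting factors $S_L^{-1}$, $ds_J$, $g$. Because $p_\alpha$ is intrinsic, $h_s$ is real on real arguments and the slice-by-slice computation is legitimate. Note also that $g\circ p_\alpha\in\mathcal{SH}^0_L(D_{\theta'})$.

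\emph{Step 2: extension to bounded functions.} For $g\in\mathcal{SH}^\bnd_L(D_\theta)$, decompose $g$ as in \eqref{Eq_Extended_decomposition}. The constant term is trivial. For $(1+s^2)^{-1}$ I need
\begin{equation*}
(1+p_\alpha(T)^2)^{-1}=\Big(\frac{1}{1+p_\alpha^2}\Big)(T),
\end{equation*}
which is the case $s=J$ of the identity $Q_s[p_\alpha(T)]^{-1}=h_s(T)$ proved in Theorem~\ref{thm_palpha_bisectorial_positive}. Hence $g(p_\alpha(T))=(g\circ p_\alpha)(T)$ for all such $g$. The function $(1+p_\alpha^2)^{-1}$ lies in $\mathcal{N}^\bnd$ rather than in $\mathcal{N}^0$, but that is exactly what the extended calculus handles.

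\emph{Step 3: the $H^\infty$-functional calculus.} Take $e(s)=(1+s^2)^{-m}$ with $m>\frac{\delta}{2}$, so that $e,ef\in\mathcal{N}^\bnd/\mathcal{SH}_L^\bnd$. Then by definition
\begin{equation*}
f(p_\alpha(T))=e(p_\alpha(T))^{-1}(ef)(p_\alpha(T)).
\end{equation*}
By Step 2 this equals $(e\circ p_\alpha)(T)^{-1}((ef)\circ p_\alpha)(T)$. Now $e\circ p_\alpha\in\mathcal{N}^\bnd(D_{\theta'})$, and the operator $(e\circ p_\alpha)(T)=e(p_\alpha(T))$ is injective. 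Thus $e\circ p_\alpha$ is an admissible regularizer for $f\circ p_\alpha$, and independence of the regularizer in Definition~\ref{defi_Hinfty} gives i).

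Part ii) is identical, with $q_\alpha$, sectors $S_\varphi$, and the sectorial statement of Theorem~\ref{thm_palpha_bisectorial_positive}~ii) in place of the bisectorial one. Here, however, $f$ is only given on $D_\theta$, while $q_\alpha(T)$ is sectorial, so its calculus uses contours $\partial S_\varphi$. I would interpret $f(q_\alpha(T))$ through the restriction to the right sector $S_\theta\subseteq D_\theta$, which is where the values of $q_\alpha$ lie.

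I expect the main obstacle to be Step 1, namely the justification of Fubini and of the inner Cauchy integral in the noncommutative setting, including the correct placement of $ds_J$ when $g$ is only left slice hyperholomorphic.
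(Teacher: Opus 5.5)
Your three-layer architecture is the paper's: the $\omega$-calculus first, then the extended calculus via $Q_J[p_\alpha(T)]^{-1}=h_J(T)$, then the $H^\infty$-calculus with regularizer $e\circ p_\alpha$. Steps 2 and 3 are fine. The gap is in Step 1.

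You assert that $(h_s\overline{s}-p_\alpha h_s)(T)$ ``is given by the extended $\omega$-calculus as an integral over $\partial D_{\varphi'}$''. It is not. As a function of $\xi$ this is $(s-p_\alpha(\xi))^{-1}$, which tends to $\frac{1}{s}\neq 0$ as $\xi\to 0$. So it belongs to the extended class but not to $\mathcal{SH}_L^0(D_{\varphi'})$. The extended calculus gives it as $(1+T^2)^{-1}\frac{1}{s}+g_s(T)$ with $g_s(\xi):=(s-p_\alpha(\xi))^{-1}-\frac{1}{(1+\xi^2)s}$, and only $g_s(T)$ is a contour integral. For the unsubtracted function, $\Vert S_L^{-1}(\xi,T)\Vert\,|s-p_\alpha(\xi)|^{-1}\approx\frac{C}{|\xi||s|}$ near $\xi=0$. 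Hence the $\xi$-integral does not converge absolutely in general (for $T=0$ it is $\int\frac{dr}{r}$ on each ray), and Fubini has nothing to act on. The bound \eqref{Eq_palpha_bisectorial_4} that you cite controls the subtracted function $g_s$ of Theorem~\ref{thm_palpha_bisectorial_positive}, not $(s-p_\alpha(\xi))^{-1}$ itself.

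The missing idea is to split off this non-decaying part and show that it contributes nothing. Since $g$ decays at $0$ and $\infty$, $g(s)/s$ is holomorphic on $D_\varphi\cap\mathbb{C}_J$ and integrable at both ends. Cauchy's theorem on the truncated double sector then gives
\begin{equation*}
\int_{\partial D_\varphi\cap\mathbb{C}_J}\frac{1}{s}\,ds_J\,g(s)=0.
\end{equation*}
This removes the term $(1+T^2)^{-1}\frac{1}{s}$ from the outer integral. Fubini is then justified by the bound
\begin{equation*}
\frac{1}{|\xi|}\,\frac{|s||\xi|^2+|\xi|^\alpha}{|s|(|s|+|\xi|^\alpha)(1+|\xi|^2)}\,\frac{|s|^\beta}{1+|s|^{2\beta}},
\end{equation*}
which is integrable in both variables. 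After Fubini, the same identity also removes the $\frac{1}{(1+\xi^2)s}$ part of the inner integral. What remains is exactly the Cauchy integral $\frac{1}{2\pi}\int_{\partial D_\varphi\cap\mathbb{C}_J}(s-p_\alpha(\xi))^{-1}ds_J\,g(s)=g(p_\alpha(\xi))$ that you use.

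The subtracted term must be holomorphic in $s$ for this to work. The term $\frac{|s|^{2/\alpha}}{(|s|^{2/\alpha}+\xi^2)s}$ subtracted in \eqref{Eq_palpha_bisectorial_4} would leave $\int(|s|^{2/\alpha}+T^2)^{-1}\frac{|s|^{2/\alpha}}{s}\,ds_J\,g(s)$, and Cauchy's theorem does not dispose of that.
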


\begin{proof}
We will only proof  i), since ii) follows similarly. We split the proof in three steps, first for the $\omega$-functional calculus \eqref{Eq_Omega}, then for the extended $\omega$-functional calculus \eqref{Eq_Extended} and finally for the $H^\infty$-functional calculus \eqref{Eq_Hinfty}. \medskip

In the \textit{first step} let us assume that $f\in\mathcal{SH}_L^0(D_\theta)$. Since we already know that $p_\alpha(T)$ is bisectorial of angle $\alpha\omega$, we can write for any $\varphi\in(\alpha\omega,\frac{\pi}{2})$ and $J\in\mathbb{S}$ the $\omega$-functional calculus as
\begin{equation}\label{Eq_Composition_positive_1}
f(p_\alpha(T))=\frac{1}{2\pi}\int_{\partial D_\varphi\cap\mathbb{C}_J}S_L^{-1}(s,p_\alpha(T))ds_Jf(s).
\end{equation}
Next, for every fixed $s\in\partial D_\varphi\cap\mathbb{C}_J$, we can write the left $S$-resolvent, similar as in \eqref{Eq_palpha_bisectorial_3}, as
\begin{equation}\label{Eq_Composition_positive_4}
S_L^{-1}(s,p_\alpha(T))=(1+T^2)^{-1}\frac{1}{s}+\Big(\underbrace{h_s(\xi)\overline{s}-p_\alpha(\xi)h_s(\xi)-\frac{1}{1+\xi^2}\frac{1}{s}}_{=:g_s(\xi)}\Big)(T).
\end{equation}
Note that we have
\begin{equation}\label{Eq_Composition_positive_3}
\int_{\partial D_\varphi\cap\mathbb{C}_J}\frac{1}{s}ds_Jf(s)=\lim\limits_{\varepsilon\rightarrow 0^+}\lim\limits_{R\rightarrow\infty}\int_{\partial(D_\varphi\cap B_\varepsilon(0)^c\cap B_R(0))\cap\mathbb{C}_J}\frac{1}{s}ds_Jf(s)=0,
\end{equation}
because the integrand $f/ s$ has no singularity inside the domain $D_\varphi\cap B_\varepsilon(0)^c\cap B_R(0)\cap\mathbb{C}_J$. The limit exists because of the decay of the function $f$ at zero and at infinity. Hence, the integral \eqref{Eq_Composition_positive_1} reduces to
\begin{align}
f(p_\alpha(T))&=\frac{1}{2\pi}\int_{\partial D_\varphi\cap\mathbb{C}_J}g_s(T)ds_Jf(s) \notag \\
&=\frac{1}{4\pi^2}\int_{\partial D_\varphi\cap\mathbb{C}_J}\bigg(\int_{\partial D_{\varphi'}\cap\mathbb{C}_J}S_L^{-1}(\xi,T)d\xi_Jg_s(\xi)\bigg)ds_Jf(s), \label{Eq_Composition_positive_2}
\end{align}
where in the second line we wrote the $\omega$-functional calculus of $g_s(T)$ with some freely chosen $\omega<\varphi'<\max\{\frac{\varphi}{\alpha},\frac{\pi}{2}\}$. Note that $\varphi'<\frac{\varphi}{\alpha}$ is needed for $g_s(\xi)$ in \eqref{Eq_Composition_positive_4} to be well defined. Using the estimate \eqref{Eq_SL_estimate} of the $S$-resolvent, a similar estimate of $h_s$ as in  \eqref{Eq_palpha_bisectorial_4}, as well as
$$
|f(s)|\leq\frac{C_\beta|s|^\beta}{1+|s|^{2\beta}},
$$
 for some $\beta>0$, $C_\beta\geq 0$, from Definition~\ref{defi_Omega}, we get
\begin{equation}\label{Eq_Composition_positive_6}
|S_L^{-1}(\xi,T)g_s(\xi)f(s)|\leq\frac{4C_\varphi C_\beta}{|e^{J\varphi}-e^{J\alpha\varphi'}||1+e^{2J\varphi'}|}\frac{1}{|\xi|}\frac{(|s||\xi|^2+|\xi|^\alpha)}{|s|(|s|+|\xi|^\alpha)(1+|\xi|^2)}\frac{|s|^\beta}{1+|s|^{2\beta}}.
\end{equation}
From this estimate, we can then conclude easily the finiteness of the double integral
\begin{equation*}
\int_{\partial D_\varphi\cap\mathbb{C}_J}\int_{\partial D_{\varphi'}\cap\mathbb{C}_J}|S_L^{-1}(\xi,T)g_s(\xi)f(s)|ds_Jd\xi_J<\infty.
\end{equation*}
Hence we are in the position to apply Fubini's theorem, change the order of integration in \eqref{Eq_Composition_positive_2}, and get
\begin{align*}
f(p_\alpha(T))&=\frac{1}{4\pi^2}\int_{\partial D_{\varphi'}\cap\mathbb{C}_J}S_L^{-1}(\xi,T)d\xi_J\bigg(\int_{\partial D_\varphi\cap\mathbb{C}_J}g_s(\xi)ds_Jf(s)\bigg) \\
&=\frac{1}{4\pi^2}\int_{\partial D_{\varphi'}\cap\mathbb{C}_J}S_L^{-1}(\xi,T)d\xi_J\bigg(\int_{\partial D_\varphi\cap\mathbb{C}_J}\frac{1}{s-p_\alpha(\xi)}ds_Jf(s)\bigg),
\end{align*}
where $s \in \mathbb{C}_J$ and where in the second line we again used that the $\frac{1}{s}$-part of $g_s$ vanishes, as shown in \eqref{Eq_Composition_positive_3}. Since we have chosen $\alpha\varphi'<\varphi$, there is $p_\alpha(\xi)\in D_\varphi\cap\mathbb{C}_J$ for every $\xi\in\partial D_{\varphi'}\cap\mathbb{C}_J$. Consequently, the Cauchy integral formula then gives
\begin{equation}\label{Eq_Composition_positive_5}
f(p_\alpha(T))=\frac{1}{2\pi}\int_{\partial D_{\varphi'}\cap\mathbb{C}_J}S_L^{-1}(\xi,T)d\xi_Jf(p_\alpha(\xi))=(f\circ p_\alpha)(T).
\end{equation}
In the \textit{second step}, we assume that
$$
\widetilde{f}(s):=f(s)-\frac{1}{1+s^2}f_0\in\mathcal{SH}_L^0(D_\theta).
$$
 Then, by \eqref{Eq_Extended}, the extended $\omega$-functional calculus of $f(p_\alpha(T))$ is given by
\begin{equation}\label{Eq_Composition_positive_8}
f(p_\alpha(T))=(1+p_\alpha(T)^2)^{-1}f_0+\widetilde{f}(p_\alpha(T)).
\end{equation}
From the first step, we already know that $\widetilde{f}(p_\alpha(T))=(\widetilde{f}\circ p_\alpha)(T)$. Moreover, by the product rule of the $H^\infty$-functional calculus in Theorem~\ref{thm_Product_rule_positive_powers} there also is
\begin{equation*}
(1+p_\alpha(T)^2)\Big(\frac{1}{1+p_\alpha^2}\Big)(T)=(1+p_\alpha^2)(T)\Big(\frac{1}{1+p_\alpha^2}\Big)(T)=\Big((1+p_\alpha^2)\frac{1}{1+p_\alpha^2}\Big)(T)=1.
\end{equation*}
Since we know that $1+p_\alpha(T)^2$ is bijective, we conclude from this equation, that
\begin{equation}\label{Eq_Composition_positive_9}
(1+p_\alpha(T)^2)^{-1}=\Big(\frac{1}{1+p_\alpha^2}\Big)(T).
\end{equation}
Plugging now \eqref{Eq_Composition_positive_9} into \eqref{Eq_Composition_positive_8}, gives
\begin{equation*}
f(p_\alpha(T))=\Big(\frac{1}{1+p_\alpha^2}\Big)(T)f_0+(\widetilde{f}\circ p_\alpha)(T)=\Big(\frac{1}{1+p_\alpha^2}f_0+\widetilde{f}\circ p_\alpha\Big)(T)=(f\circ p_\alpha)(T).
\end{equation*}
In the \textit{third step} we now combine \eqref{Eq_Composition_positive_5} and \eqref{Eq_Composition_positive_6} to get the composition rule of the $H^\infty$-functional calculus. Since $f$ satisfies the estimate \eqref{Eq_Composition_rule_not_injective}, the function $e(s)=\frac{1}{(1+s^2)^n}$, with $n>\frac{\delta}{2}$, is a possible regularizer. From the second step we already know that
$e(p_\alpha(T))=(e\circ p_\alpha)(T)$ as well as $$(ef)(p_\alpha)(T)=((ef)\circ p_\alpha)(T).$$
 Hence there is
\begin{align}
f(p_\alpha(T))&=e(p_\alpha(T))^{-1}(ef)(p_\alpha(T)) \notag \\
&=(e\circ p_\alpha)(T)^{-1}((ef)\circ p_\alpha)(T) \notag \\
&=(e\circ p_\alpha)(T)^{-1}((e\circ p_\alpha)(f\circ p_\alpha))(T) \notag \\
&=(f\circ p_\alpha)(T), \label{Eq_Composition_positive_7}
\end{align}
where in the last line we used that $e\circ p_\alpha$ is a regularizer of $f\circ p_\alpha$.
\end{proof}

\begin{thm}\label{thm_Composition_positive_powers}
Let $T\in\mathcal{K}(V)$ be bisectorial of angle $\omega\in(0,\frac{\pi}{2})$. Then for every $0<\alpha<\frac{\pi}{2\omega}$ and $\beta>0$, we get \medskip

\begin{minipage}{0.49\textwidth}
\begin{enumerate}
\item[i)] $p_\beta(p_\alpha(T))=p_{\beta\alpha}(T)$, \medskip

\item[ii)] $q_\beta(p_\alpha(T))=q_{\beta\alpha}(T)$,
\end{enumerate}
\end{minipage}
\begin{minipage}{0.49\textwidth}
\begin{enumerate}
\item[iii)] $p_\beta(q_\alpha(T))=q_{\beta\alpha}(T)$, \medskip

\item[iv)] $q_\beta(q_\alpha(T))=q_{\beta\alpha}(T)$.
\end{enumerate}
\end{minipage}
\end{thm}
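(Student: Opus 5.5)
The plan is to obtain all four identities directly from Proposition~\ref{prop_Composition_rule_not_injective}, applied with $f=p_\beta$ or $f=q_\beta$. After that, only two things remain: an elementary pointwise identity between the composed functions and the claimed powers, and, for iii) and iv), checking that the hypotheses hold for $q_\alpha(T)$.

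\emph{Hypotheses of Proposition~\ref{prop_Composition_rule_not_injective}.} Since $|p_\beta(s)|=|q_\beta(s)|=|s|^\beta$, both functions satisfy \eqref{Eq_Composition_rule_not_injective} with $f_0=0$, $\gamma=\delta=\beta$ and $C=1$. They are intrinsic and left slice hyperholomorphic on every $D_\theta$, $\theta\in(\alpha\omega,\frac{\pi}{2})$, because this holds on each half $\Sc(s)>0$ and $\Sc(s)<0$ separately. Parts i) and ii) of the proposition therefore give
\[
f(p_\alpha(T))=(f\circ p_\alpha)(T),\qquad f(q_\alpha(T))=(f\circ q_\alpha)(T).
\]
For iii) and iv) one point needs care: in these identities $p_\beta$ and $q_\beta$ are applied to $q_\alpha(T)$, which must be bisectorial. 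By Theorem~\ref{thm_palpha_bisectorial_positive}~ii), $q_\alpha(T)$ is sectorial of angle $\alpha\omega$. Since $\alpha<\frac{\pi}{2\omega}$, we have $\alpha\omega<\frac{\pi}{2}$, so its $S$-spectrum lies in $\overline{D_{\alpha\omega}}$. The resolvent estimate on the complement of $D_\varphi$ follows from the sectorial estimate, because that complement is contained in the complement of the sector $S_\varphi$. Hence $q_\alpha(T)$ is bisectorial of angle $\alpha\omega$, and $p_\beta(q_\alpha(T))$, $q_\beta(q_\alpha(T))$ are defined through Definition~\ref{defi_palphaT_positive}.

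\emph{Pointwise identities on $D_{\theta'}$, with $\alpha\theta'<\frac{\pi}{2}$.}
\begin{itemize}
\item For $\Sc(\xi)>0$ we have $\arg\xi\in(-\theta',\theta')$, so $\alpha\arg\xi\in(-\frac{\pi}{2},\frac{\pi}{2})$. Hence $\ln(\xi^\alpha)=\alpha\ln\xi$ in $\mathbb{C}_J$ with the principal branch, and $(\xi^\alpha)^\beta=\xi^{\alpha\beta}$. Also $\Sc(\xi^\alpha)>0$.
\item For $\Sc(\xi)<0$ the same reasoning applied to $-\xi$ gives $\Sc(p_\alpha(\xi))<0$, $\Sc(q_\alpha(\xi))>0$, and $((-\xi)^\alpha)^\beta=(-\xi)^{\alpha\beta}$.
\end{itemize}
From these we read off the four compositions:
\begin{itemize}
\item $p_\beta\circ p_\alpha$ equals $\xi^{\alpha\beta}$ for $\Sc(\xi)>0$ and $-(-\xi)^{\alpha\beta}$ for $\Sc(\xi)<0$, that is, $p_{\alpha\beta}$;
\item $q_\beta\circ p_\alpha=q_{\alpha\beta}$, because $q_\beta$ removes the sign;
\item $p_\beta\circ q_\alpha=q_{\alpha\beta}$ and $q_\beta\circ q_\alpha=q_{\alpha\beta}$, because $q_\alpha(\xi)$ always has positive scalar part, where $p_\beta$ and $q_\beta$ agree.
\end{itemize}
Substituting these into the identities above proves i)–iv).

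\emph{Main obstacle.} Proposition~\ref{prop_Composition_rule_not_injective}~ii) is stated under the assumption that $f$ is defined on a double sector $D_\theta$ with $\theta>\alpha\omega$, and that $q_\alpha(\xi)$ stays inside it. I would check that the proof of that part (as for part i)) only uses the contour $\partial D_\varphi$ with $\varphi\in(\alpha\omega,\frac{\pi}{2})$. The inclusion $q_\alpha(\overline{D_{\varphi'}})\subseteq\overline{S_{\alpha\varphi'}}\subseteq D_\varphi\cup\{0\}$ then replaces the corresponding inclusion for $p_\alpha$. With $\alpha\omega<\frac{\pi}{2}$ this is consistent. If the proposition's ii) were only intended for sectorial contours, I would instead rerun its three-step argument verbatim with $q_\alpha$ in place of $p_\alpha$.
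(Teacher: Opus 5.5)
Your proposal is correct and follows the paper's proof. Both apply Proposition~\ref{prop_Composition_rule_not_injective} with $f=p_\beta$ or $f=q_\beta$, which satisfy \eqref{Eq_Composition_rule_not_injective} with $f_0=0$ and $\gamma=\delta=\beta$, and then use the pointwise identities $p_\beta\circ p_\alpha=p_{\beta\alpha}$ and $q_\beta\circ p_\alpha=p_\beta\circ q_\alpha=q_\beta\circ q_\alpha=q_{\beta\alpha}$. Your extra checks make explicit what the paper leaves implicit: $q_\alpha(T)$, being sectorial of angle $\alpha\omega<\frac{\pi}{2}$, is bisectorial (so $p_\beta(q_\alpha(T))$ and $q_\beta(q_\alpha(T))$ are defined), and the branch identities require $\alpha\theta'<\frac{\pi}{2}$.
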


\begin{proof}
Since both functions $p_\beta$ and $q_\beta$ satisfy the bound \eqref{Eq_Composition_rule_not_injective}, all four identities follow immediately from Proposition~\ref{prop_Composition_rule_not_injective}, together with the identities \medskip

\begin{minipage}{0.49\textwidth}
\begin{enumerate}
\item[i)] $p_\beta(p_\alpha(s))=p_{\beta\alpha}(s)$, \medskip

\item[ii)] $q_\beta(p_\alpha(s))=q_{\beta\alpha}(s)$,
\end{enumerate}
\end{minipage}
\begin{minipage}{0.49\textwidth}
\begin{enumerate}
\item[iii)] $p_\beta(q_\alpha(s))=q_{\beta\alpha}(s)$, \medskip

\item[iv)] $q_\beta(q_\alpha(s))=q_{\beta\alpha}(s)$,\end{enumerate}
\end{minipage}

\medskip of the functions in \eqref{Eq_palpha_qalpha}.
\end{proof}

\subsection{Negative fractional powers}\label{sec_Negative_fractional_powers}

If we want to consider fractional powers with negative exponent, we are facing the problem that the functions $p_\alpha$ and $q_\alpha$ in \eqref{Eq_palpha_qalpha} do not vanish at $s=0$ anymore, since they have a polynomial singularity. For the $H^\infty$-functional calculus in Definition~\ref{defi_Hinfty} this means that we need to regularize at the origin, which by Remark~\ref{rem_Polynomially_growing_functions} means that we need $T$ to be injective.

\begin{defi}\label{defi_palphaT_negative}
Let $T\in\mathcal{K}(V)$ be injective and bisectorial. Then for every $\alpha\in\mathbb{R}$, we define the operators
\begin{equation*}
p_\alpha(T)\text{ and }q_\alpha(T)\text{ via the }H^\infty\text{-functional calculus \eqref{Eq_Hinfty}}.
\end{equation*}
A regularizer is for every integer $n>|\alpha|$ given by
\begin{equation}\label{Eq_Regularizer_negative}
e(s)=\frac{s^n}{(1+s^2)^n}.
\end{equation}
\end{defi}

In our current setting, where we assume that the operator $T$ is injective, we find a similar result as in Proposition~\ref{prop_Product_rule_positive} about when the product rule of the $H^\infty$-functional calculus holds with equality. Namely, in the injective setting we can also allow functions $g,f$ which are vanishing at $\infty$ and are polynomially growing at $0$, and for which $\frac{1}{g}$ vanishes at $0$ and polynomially grows at $\infty$.

\begin{prop}\label{prop_Product_rule_negative}
Let $T\in\mathcal{K}(V)$ be injective and bisectorial of angle $\omega\in(0,\frac{\pi}{2})$, and $f\in\mathcal{SH}_L(D_\theta)$, $g\in\mathcal{N}(D_\theta)$, for some $\theta\in(\omega,\frac{\pi}{2})$. If there exist $0<\delta\leq\gamma$, as well as constants $0<C_1\leq C_2$, such that for some $f_0\in\mathbb{R}_n$, $g_0\in\mathbb{R}$ there hold the estimates
\begin{subequations}
\begin{align}
C_1\min\Big\{\frac{1}{|s|^\delta},\frac{1}{|s|^\gamma}\Big\}\leq|g(s)-g_0|\leq C_2\max\Big\{\frac{1}{|s|^\delta},\frac{1}{|s|^\gamma}\Big\},\qquad s\in D_\theta, \label{Eq_Product_rule_negative_g} \\
|f(s)-f_0|\leq C_2\max\Big\{\frac{1}{|s|^\delta},\frac{1}{|s|^\gamma}\Big\},\qquad s\in D_\theta. \label{Eq_Product_rule_negative_f}
\end{align}
\end{subequations}
Then we have the product rule
\begin{equation*}
(gf)(T)=g(T)f(T).
\end{equation*}
\end{prop}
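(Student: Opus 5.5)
We mirror the proof of Proposition~\ref{prop_Product_rule_positive}, with the roles of $0$ and $\infty$ interchanged.

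\emph{Reduction to a domain inclusion.} By the product rule \cite[Theorem~5.7]{MS24} we already have
\begin{equation*}
(gf)(T)\supseteq g(T)f(T),\qquad\dom(g(T)f(T))=\dom((gf)(T))\cap\dom(f(T)).
\end{equation*}
So it suffices to prove $\dom((gf)(T))\subseteq\dom(f(T))$.

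\emph{Choice of regularizer.} The upper bounds \eqref{Eq_Product_rule_negative_g} and \eqref{Eq_Product_rule_negative_f} allow polynomial growth at the origin. We therefore use the regularizer
\begin{equation*}
e(s)=\frac{s^{2n}}{(1+s^2)^{2n}},\qquad\text{or simply}\qquad e(s)=\frac{s^m}{(1+s^2)^m}\ \text{ with } m>\gamma .
\end{equation*}
By Remark~\ref{rem_Polynomially_growing_functions}~ii), the function $e$ regularizes both $f$ and $g$, and $e^2$ regularizes $gf$; injectivity of $T$ makes $e(T)$ injective. Moreover
\begin{equation*}
e^2(T)=T^{2m}(1+T^2)^{-2m},
\end{equation*}
so its range is $\ran(T^{2m}(1+T^2)^{-2m})$, which lies in $\dom(T^{2m})\cap\ran(T^{2m})$. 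Take $v\in\dom((gf)(T))$. Then $(e^2gf)(T)v\in\ran(e^2(T))$, i.e.
\begin{equation*}
(e^2gf)(T)v=T^{2m}(1+T^2)^{-2m}w\qquad\text{for some } w\in V.
\end{equation*}

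\emph{Dividing by $g$.} We use the lower bound on $|g-g_0|$. Since $g$ grows like $|s|^{-\delta}$ to $|s|^{-\gamma}$ near $0$ and decays near $\infty$, $1/g$ vanishes at $0$ but grows polynomially at $\infty$. We multiply by a factor $(1+s^2)^{-m}$, which decays at $\infty$, so that $\frac{e}{g(1+s^2)^{m}}\cdot$(suitable power of $s$) lies in $\mathcal{N}^0(D_\theta)$. The natural choice is
\begin{equation*}
h:=\frac{1}{(1+s^2)^m}\,\frac{e}{g}.
\end{equation*}
One must check $h\in\mathcal{N}^0(D_\theta)$: near $0$ it behaves like $|s|^{m}/|g|\lesssim|s|^{m+\delta}$, and near $\infty$ like $|s|^{-2m}|s|^{\gamma}$. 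A subtlety here is that $g-g_0$ is bounded below, not $g$ itself. The lower bound only controls $g-g_0$, so when $g_0\neq0$ the quotient $1/g$ may be problematic near $\infty$, where $g\to g_0$. I would first handle $g_0=0$, where the estimate is clean, and then reduce the general case by writing $g=g_0+(g-g_0)$. The constant part $g_0$ commutes trivially and contributes a bounded operator, since $g_0(T)=g_0$ and $(g_0f)(T)=g_0f(T)$. Hence
\begin{equation*}
\dom((gf)(T))\supseteq\dom(f(T))\cap\dom(((g-g_0)f)(T)),
\end{equation*}
and it is enough to treat $g-g_0$ in place of $g$. This reduction step, together with verifying the membership $h\in\mathcal{N}^0$, is where I expect the main care to be needed.

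\emph{Conclusion.} Applying $h(T)$ and using the product rule of the extended $\omega$-calculus \cite[Theorem~4.7]{MS24} gives
\begin{equation*}
h(T)(e^2gf)(T)v=\Big(\frac{e^3f}{(1+s^2)^m}\Big)(T)v=\Big(\frac{e^2}{(1+s^2)^m}\Big)(T)(ef)(T)v.
\end{equation*}
By the inclusion \cite[Corollary~3.18~ii)]{MS24}, $h(T)$ maps $\ran(e^2(T))$ into a space of the form $\ran(T^{2m})\cap\dom(T^{2m})$. Hence $(ef)(T)v$ lies in $\ran(T^m)\cap\dom(T^m)$. Since $T$ is injective and we can cancel the bounded injective factor $T^m(1+T^2)^{-m}\cdot(1+T^2)^{-m}$ step by step, this equals $\ran(e(T))=\dom(e(T)^{-1})$. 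Therefore $v\in\dom(f(T))$, which is the required inclusion.
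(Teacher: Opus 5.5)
\textbf{There is a genuine gap in the decisive last step.} Your outline matches the paper's: reduce to $\dom((gf)(T))\subseteq\dom(f(T))$, regularize, divide by $g$, and transfer the range condition from $e^2$ to $e$. The problem is the multiplier you chose. With $e(s)=\frac{s^m}{(1+s^2)^m}$ and $h=\frac{e}{(1+s^2)^m g}$, all you end up knowing is
\[
\Big(\frac{e^2}{(1+s^2)^m}\Big)(T)w=T^{2m}(1+T^2)^{-3m}w\in\dom(T^{2m})\cap\ran(T^{2m}),\qquad w:=(ef)(T)v .
\]
This holds for \emph{every} $w\in V$, because $(1+T^2)^{-3m}w\in\dom(T^{6m})$. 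So nothing about $w$ can be extracted. Cancelling the injective factor $e(T)$ only returns the trivial statement $T^m(1+T^2)^{-2m}w\in\ran(e(T))$.

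If your deduction were valid, it would never use $v\in\dom((gf)(T))$, and it would prove $\dom(f(T))=V$. For example, with $f=g=p_{-1}$ it would make $T^{-1}$ everywhere defined, which is false whenever $0\in\sigma_S(T)$ (e.g.\ $T=\nabla$).

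The cause is the extra factor $e$, with its zero $s^m$ at the origin, inside $h$. It uses up exactly the power of $T$ you need to cancel.

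\textbf{How the paper does it.} The paper takes $e(s)=\frac{s^{2n}}{(1+s^2)^n}$, which is bounded at $\infty$. Then $\ran(e(T))=\ran(T^{2n})$ and $\ran(e^2(T))=\ran(T^{4n})$, with no domain conditions. It uses the multiplier $\frac{e}{s^{2n}g}=\frac{1}{(1+s^2)^ng}$. This gives $\frac{e}{s^{2n}g}\,e^2gf=\frac{e^2}{s^{2n}}\,ef$, so $T^{2n}(1+T^2)^{-2n}w\in\ran(T^{4n})$. Injectivity of $T$ then yields $(1+T^2)^{-2n}w\in\ran(T^{2n})$, hence $w\in\ran(T^{2n})=\ran(e(T))$.

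With your $e$, the analogous fix is $h=\frac{1}{(1+s^2)^\ell g}$ with $2\ell>\gamma$. This gives $T^m(1+T^2)^{-m-\ell}w\in\ran(T^{2m})$, hence $w\in\ran(T^m)$. The condition $w\in\dom(T^m)$ then still has to be checked separately.

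\textbf{Your treatment of $g_0\neq 0$ is circular.} The inclusion $\dom((gf)(T))\supseteq\dom(f(T))\cap\dom(((g-g_0)f)(T))$ points the wrong way. To pass from $v\in\dom((gf)(T))$ to $v\in\dom(((g-g_0)f)(T))$, you would already need $v\in\dom(f(T))$, which is the claim itself.

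You are right that there is a real subtlety here, and the paper does not resolve it either. It simply asserts $\frac{e}{s^{2n}g}\in\mathcal{N}^0(D_\theta)$ from the lower bound. That is immediate for $g_0=0$, which covers $g=p_\alpha,q_\alpha$, the only case used later. Otherwise it needs $g$ to have no zeros in $D_\theta$; for instance, $g(s)=g_0+s^{-1}$ vanishes at $s=-1/g_0$. So restrict to $g_0=0$, or add a zero-freeness hypothesis, rather than relying on the splitting.
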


\begin{proof}
For the same reason as in \eqref{Eq_Product_rule_positive_2}, it is sufficient to verify the domain inclusion
\begin{equation}\label{Eq_Product_rule_negative_2}
\dom((gf)(T))\subseteq\dom(f(T)).
\end{equation}
To do so, let $v\in\dom((gf)(T))$. Due to the upper bound of $g$ in \eqref{Eq_Product_rule_negative_g} and the same one for $f$ in \eqref{Eq_Product_rule_negative_f}, it is possible to consider the regularizer function
\begin{equation*}
e(s)=\frac{s^{2n}}{(1+s^2)^n},
\end{equation*}
with $n>\frac{\delta}{2}$, for both functions $g$ and $f$. Then $e^2$ is a regularizer of the product $gf$ and the condition $v\in\dom((gf)(T))$ translates into
\begin{equation*}
(e^2gf)(T)v\in\ran(e^2(T))=\ran\big(T^{4n}(1+T^2)^{-2n}\big)=\ran(T^{4n}).
\end{equation*}
Next, due to the lower bound \eqref{Eq_Product_rule_negative_g} of $g$, and by the choice $n>\frac{\delta}{2}$, there is $\frac{e}{s^{2n}g}\in\mathcal{N}^0(D_\theta)$, a function for which we can apply the $\omega$-functional calculus. From the operator inclusion in \cite[Corollary~3.18~ii)]{MS24}, it then follows that also
\begin{equation*}
\Big(\frac{e}{s^{2n}g}\Big)(T)(e^2gf)(T)v\in\ran(T^{4n}).
\end{equation*}
However, with the product rule \cite[Theorem~3.11]{MS24} of the $\omega$-functional calculus, we can rewrite this expression as
\begin{equation*}
\Big(\frac{e}{s^{2n}g}\Big)(T)(e^2gf)(T)v=\Big(\frac{e^3f}{s^{2n}}\Big)(T)v=\Big(\frac{e^2}{s^{2n}}\Big)(T)(ef)(T)v.
\end{equation*}
Altogether, this means that $$(\frac{e^2}{s^{2n}})(T)(ef)(T)v\in\ran(T^{4n}),$$ and since we can explicitly write $$(\frac{e^2}{s^{2n}})(T)=T^{2n}(1+T^2)^{-2n},$$ this implies
\begin{equation*}
(ef)(T)v\in\ran(T^{2n})=\ran(e(T)).
\end{equation*}
This proves $v\in\dom(f(T))$, and we have verified the domain inclusion \eqref{Eq_Product_rule_negative_2}.
\end{proof}

The next theorem proves the power rule of fractional powers for exponents which are both positive or both negative. The same product rule for mixed powers cannot hold because the exponent of the sum is smaller than the individual ones, and hence the domain will increase. The power rule for mixed powers will be investigated in Theorem~\ref{thm_Product_rule_mixed_powers} under the additional assumption that $T$ has dense domain and range.

\begin{thm}\label{thm_Product_rule_negative_powers}
Let $T\in\mathcal{K}(V)$ be injective and bisectorial. Then for every $\alpha,\beta\in\mathbb{R}\setminus\{0\}$ both positive or both positive,
we have the power rules \medskip

\begin{enumerate}
\item[i)] $p_{\alpha+\beta}(T)=p_\alpha(T)q_\beta(T)=q_\alpha(T)p_\beta(T)$; \medskip

\item[ii)] $q_{\alpha+\beta}(T)=p_\alpha(T)p_\beta(T)=q_\alpha(T)q_\beta(T)$.
\end{enumerate}
\end{thm}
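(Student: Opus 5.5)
The plan is to mirror the proof of Theorem~\ref{thm_Product_rule_positive_powers}. The statement clearly intends ``both positive or both negative''. The positive case is already Theorem~\ref{thm_Product_rule_positive_powers}, but I would verify it again in the injective setting: the definitions of $p_\alpha(T)$ via the regularizers \eqref{Eq_Regularizer_positive} and \eqref{Eq_Regularizer_negative} agree, since the $H^\infty$-functional calculus is independent of the chosen regularizer. So the new content is $\alpha,\beta<0$.

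First, the pointwise identities \eqref{Eq_Product_rule_positive_powers_1} hold for all real $\alpha,\beta$ and every $\Sc(s)\neq 0$. This follows directly from $s^\alpha s^\beta=s^{\alpha+\beta}$ for $\Sc(s)>0$, and from the sign bookkeeping $(-1)(-1)=1$ and $(-1)\cdot 1=-1$ when $\Sc(s)<0$. These identities are valid because $s^\alpha$ and $s^\beta$ are intrinsic functions on the same slice and therefore commute.

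Next, for $\alpha,\beta<0$, I apply Proposition~\ref{prop_Product_rule_negative} with $g\in\{p_\alpha,q_\alpha\}$ and $f\in\{p_\beta,q_\beta\}$, taking $g_0=0$ and $f_0=0$. Both $g$ and $f$ are intrinsic, so in particular $g\in\mathcal{N}(D_\theta)$. Since $|g(s)|=|s|^{\alpha}=|s|^{-|\alpha|}$ and $|f(s)|=|s|^{-|\beta|}$, I choose the exponents
\begin{equation*}
\delta:=\min\{|\alpha|,|\beta|\},\qquad\gamma:=\max\{|\alpha|,|\beta|\}.
\end{equation*}
With $C_1=C_2=1$, the function $g$ then satisfies both bounds in \eqref{Eq_Product_rule_negative_g}. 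This needs a short check: when $|\alpha|\in[\delta,\gamma]$, the value $|s|^{-|\alpha|}$ lies between $\min\{|s|^{-\delta},|s|^{-\gamma}\}$ and $\max\{|s|^{-\delta},|s|^{-\gamma}\}$ for both $|s|\le1$ and $|s|\ge1$. The same argument gives \eqref{Eq_Product_rule_negative_f} for $f$. Proposition~\ref{prop_Product_rule_negative} then yields $(gf)(T)=g(T)f(T)$, and the pointwise identities turn $gf$ into $p_{\alpha+\beta}$ or $q_{\alpha+\beta}$. In the positive case I do the same with Proposition~\ref{prop_Product_rule_positive}.

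The main obstacle is a subtle point in the proof of Proposition~\ref{prop_Product_rule_negative}. It uses a regularizer with $n>\frac{\delta}{2}$, but the regularization must handle the stronger singularity $|s|^{-\gamma}$ of $f$, $g$ and, in particular, $gf$. I would therefore apply the proposition with $n$ chosen large enough, namely $n>\gamma$, or $n>|\alpha|+|\beta|$ for the product. Independence of the regularizer makes this choice harmless. I would also check that the definitions of $p_{\alpha+\beta}(T)$ and $q_{\alpha+\beta}(T)$ from Definition~\ref{defi_palphaT_negative} coincide with the functional calculus of the product function that appears in the proposition.
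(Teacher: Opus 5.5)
Your proposal is correct and follows the paper's proof. The positive case is Theorem~\ref{thm_Product_rule_positive_powers}, and the negative case follows from Proposition~\ref{prop_Product_rule_negative} with $g_0=f_0=0$, combined with the pointwise identities \eqref{Eq_Product_rule_positive_powers_1}. Your remark about the regularizer is also right. In the proof of Proposition~\ref{prop_Product_rule_negative}, the condition $n>\frac{\delta}{2}$ should read $n>\frac{\gamma}{2}$ (the larger exponent), and your choice of a larger $n$ is harmless because the calculus does not depend on the regularizer.
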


\begin{proof}
If $\alpha,\beta$ are both positive, the statement is already proven in Theorem~\ref{thm_Product_rule_positive_powers}. If $\alpha,\beta$ are both negative, the functions $p_\alpha$, $q_\alpha$, $p_\beta$ and $q_\beta$ satisfy the bounds \eqref{Eq_Product_rule_negative_g} and \eqref{Eq_Product_rule_negative_f}. Since for every $\Sc(s)\neq 0$, there also hold the function identities \eqref{Eq_Product_rule_positive_powers_1}, the stated product rules are a consequence of the product rule in Proposition~\ref{prop_Product_rule_negative}.
\end{proof}

The following corollary is a direct consequence of the product rule of fractional powers in Theorem~\ref{thm_Product_rule_negative_powers}.

\begin{cor}\label{cor_Operator_domain_negative_powers}
Let $T\in\mathcal{K}(V)$ be injective and bisectorial, and $\alpha,\beta\in\mathbb{R}\setminus\{0\}$ both positive or both negative. If $|\beta|>|\alpha|$, then \medskip

\begin{enumerate}
\item[i)] $\dom(p_\beta(T))\subseteq\dom(p_\alpha(T))\cap\dom(q_\alpha(T))$; \medskip

\item[ii)] $\dom(q_\beta(T))\subseteq\dom(p_\alpha(T))\cap\dom(q_\alpha(T))$.
\end{enumerate}
\end{cor}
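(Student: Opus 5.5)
The plan is to deduce the corollary from the power rules of Theorem~\ref{thm_Product_rule_negative_powers}. Fix $\alpha,\beta$ of the same sign with $|\beta|>|\alpha|$ and set $\gamma:=\beta-\alpha$. Then $\gamma$ has the same sign as $\alpha$ and $\beta$, and $\gamma\neq0$. Hence the pair $(\alpha,\gamma)$ consists of two exponents that are both positive or both negative, which is exactly the hypothesis of Theorem~\ref{thm_Product_rule_negative_powers}.

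First I write $\beta=\gamma+\alpha$ and apply the theorem with the roles chosen so that the operator acting first is the fractional power of exponent $\alpha$. Part i) gives
\begin{equation*}
p_\beta(T)=p_{\gamma+\alpha}(T)=p_\gamma(T)q_\alpha(T)=q_\gamma(T)p_\alpha(T).
\end{equation*}
Part ii) gives
\begin{equation*}
q_\beta(T)=q_{\gamma+\alpha}(T)=p_\gamma(T)p_\alpha(T)=q_\gamma(T)q_\alpha(T).
\end{equation*}

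Next I use the definition of the domain of a product of unbounded operators, $\dom(AB)=\{v\in\dom(B)\,|\,Bv\in\dom(A)\}\subseteq\dom(B)$. From the two factorizations of $p_\beta(T)$ I read off $\dom(p_\beta(T))\subseteq\dom(q_\alpha(T))$ and $\dom(p_\beta(T))\subseteq\dom(p_\alpha(T))$. Together these give i). In the same way, the two factorizations of $q_\beta(T)$ give $\dom(q_\beta(T))\subseteq\dom(p_\alpha(T))\cap\dom(q_\alpha(T))$, which is ii).

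There is no real obstacle here, since the theorem asserts equality of operators, and equal operators have equal domains. The one point to check carefully is the order of the factors: the $\alpha$-power must stand on the right, so that it acts first. Both factorization orders are available from the symmetric statement of Theorem~\ref{thm_Product_rule_negative_powers} by relabeling, since $p_{\gamma+\alpha}=p_{\alpha+\gamma}$. The positive case could equally be taken from Theorem~\ref{thm_Product_rule_positive_powers}; in that case the same argument also reproves Corollary~\ref{cor_Operator_domain_positive_powers} and needs no injectivity.
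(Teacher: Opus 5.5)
Your proposal is correct and is exactly the argument the paper intends when it calls the corollary a direct consequence of Theorem~\ref{thm_Product_rule_negative_powers}. You split off $\gamma=\beta-\alpha$, which has the same sign as $\alpha$ and $\beta$. You then take the power rules with the $\alpha$-power as the right factor and read off the domain inclusions from $\dom(AB)\subseteq\dom(B)$.
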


\begin{rem}
Note that in the domain inclusions of Corollary~\ref{cor_Operator_domain_negative_powers} we really need $|\beta|>|\alpha|$. In the case $\alpha=\beta\in\mathbb{R}\setminus\{0\}$, we in general have neither inclusion
\begin{equation*}
\dom(p_\alpha(T))\substack{\not\subset \\ \not\supset}\dom(q_\alpha(T)).
\end{equation*}
\end{rem}

The next theorem is the counterpart to Theorem~\ref{thm_palpha_bisectorial_positive} for negative powers. Also the special case $\alpha=0$ is covered in this setting. Compared to Theorem~\ref{thm_palpha_bisectorial_positive} we need to assume that $T$ is injective so that $p_\alpha(T)$ is defined in the first place.

\begin{thm}
Let $T\in\mathcal{K}(V)$ be injective and bisectorial of angle $\omega\in(0,\frac{\pi}{2})$.

\begin{enumerate}
\item[i)] If $\alpha\in(-\frac{\pi}{2\omega},\frac{\pi}{2\omega})\setminus\{0\}$, then $p_\alpha(T)$ is bisectorial of angle $|\alpha|\omega$. \medskip

\item[ii)] If $\alpha\in(-\frac{\pi}{\omega},\frac{\pi}{\omega})\setminus\{0\}$, then $q_\alpha(T)$ is sectorial of angle $|\alpha|\omega$.
\end{enumerate}
\end{thm}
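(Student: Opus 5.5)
The positive case $\alpha>0$ is Theorem~\ref{thm_palpha_bisectorial_positive}, so only $\alpha<0$ remains. Two reductions suggest themselves. One is to show $p_\alpha(T)=p_{|\alpha|}(T^{-1})$ and apply the positive theorem to $T^{-1}$. The other is to repeat the proof of Theorem~\ref{thm_palpha_bisectorial_positive} directly with $p_\alpha$, $\alpha<0$. I would take the direct route, since it does not need the a priori fact that $T^{-1}$ is bisectorial, and it only changes which regularizer is used.

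Fix $\varphi\in(|\alpha|\omega,\frac{\pi}{2})$ and $s\in\mathbb{R}^{n+1}\setminus(D_\varphi\cup\{0\})$, and define $h_s(\xi)=(p_\alpha(\xi)^2-2s_0p_\alpha(\xi)+|s|^2)^{-1}$ on $\overline{D_{\varphi'}}$ with $\omega<\varphi'<\min\{\frac{\varphi}{|\alpha|},\frac{\pi}{2}\}$.
\begin{enumerate}
\item[(a)] For $\alpha<0$ one has $\arg p_\alpha(\xi)=\alpha\arg\xi$ on the right sector, with the analogous formula on the left sector. Hence $p_\alpha(\xi)\in\overline{D_{|\alpha|\varphi'}}$, and the denominator of $h_s$ never vanishes.
\item[(b)] Because $|p_\alpha(\xi)|=|\xi|^{-|\alpha|}$, we get $h_s(\xi)\to 0$ as $\xi\to 0$ and $h_s(\xi)\to|s|^{-2}$ as $\xi\to\infty$. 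So $h_s\in\mathcal{N}^\bnd(D_{\varphi'})$ with $h_{s,0}=0$ and $h_{s,\infty}=|s|^{-2}$, and $h_s(T)$ is bounded by the extended $\omega$-functional calculus.
\item[(c)] Proposition~\ref{prop_Product_rule_negative} (in the form of Theorem~\ref{thm_Product_rule_negative_powers}) gives $Q_s[p_\alpha(T)]=(p_\alpha^2-2s_0p_\alpha+|s|^2)(T)$. The same two product-rule identities as in the positive case then show $Q_s[p_\alpha(T)]^{-1}=h_s(T)$. Consequently $\sigma_S(p_\alpha(T))\subseteq\overline{D_{|\alpha|\omega}}$.
\end{enumerate}

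For the resolvent estimate I write $S_L^{-1}(s,p_\alpha(T))=(h_s\overline{s}-p_\alpha h_s)(T)$ and subtract an explicit term with the same limits as in \eqref{Eq_palpha_bisectorial_3}. The limits are now reversed: $1/s$ at $\infty$ and $0$ at $0$. So the model term should be $\frac{\xi^2}{|s|^{2/\alpha}+\xi^2}\frac{1}{s}$, i.e.\ $T^2(|s|^{2/\alpha}+T^2)^{-1}s^{-1}=(1-|s|^{2/\alpha}Q_{|s|^{1/\alpha}J}[T]^{-1})s^{-1}$. Its norm is bounded by $C/|s|$ using \eqref{Eq_SL_estimate} and \cite[Lemma 3.2 ii)]{CMS25}, exactly as before.

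The remainder $g_s$ is estimated as in \eqref{Eq_palpha_bisectorial_4}, with the geometric lower bounds \eqref{Eq_palpha_bisectorial_7} and \eqref{Eq_palpha_bisectorial_8} for $|s-p_\alpha(\xi)|$ and $||s|^{2/\alpha}+\xi^2|$. After the substitution $r\to|s|^{1/\alpha}r$, the integral $\int_0^\infty\frac{dr}{r}|g_s|$ should reduce to $|s|^{-1}$ times a finite constant. This gives $\Vert S_L^{-1}(s,p_\alpha(T))\Vert\le C/|s|$.

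The main obstacle is verifying that this integral converges at both $r\to0$ and $r\to\infty$ once the exponent is negative, i.e.\ that $g_s$ really decays like $|\xi|^{\pm\epsilon}$ at both ends. I would first check this by writing $g_s=\frac{1}{s-p_\alpha(\xi)}-\frac{\xi^2}{(|s|^{2/\alpha}+\xi^2)s}$ and combining the fractions, as in the positive case.

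For $q_\alpha$ the same steps work, with $\overline{S_{\varphi'}}$ replacing the double sector. The case $\alpha=0$ is excluded, as stated.
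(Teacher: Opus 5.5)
Your proposal is essentially the paper's proof: the same $h_s$ with reversed limits ($0$ at $0$, $|s|^{-2}$ at $\infty$), the product rule of Proposition~\ref{prop_Product_rule_negative} for $Q_s[p_\alpha(T)]$, and the same model term $\frac{\xi^2}{|s|^{2/\alpha}+\xi^2}\frac{1}{s}$, rewritten as $\big(1-|s|^{2/\alpha}Q_{|s|^{1/\alpha}J}[T]^{-1}\big)\frac{1}{s}$. The convergence you flag as the main obstacle does hold. Combining the fractions gives the numerator $|s|^{2/\alpha}s+\xi^2p_\alpha(\xi)$, which is bounded by $|s|^{1+2/\alpha}+|\xi|^{2+\alpha}$. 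After $r\to|s|^{1/\alpha}r$, the integral becomes $\frac{1}{|s|}\int_0^\infty\frac{1+r^{2+\alpha}}{r(1+r^\alpha)(1+r^2)}\,dr\leq\frac{1}{|s|}\big(1+\frac{2}{|\alpha|}\big)$, which is finite at both ends for every $\alpha<0$.
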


\begin{proof}
For positive powers $\alpha>0$, the statement is already proven in Theorem~\ref{thm_palpha_bisectorial_positive}. Hence it is left to consider $\alpha<0$. \medskip

i)\;\;The proof of the $S$-spectrum being included in the double sector
\begin{equation*}
\sigma_S(p_\alpha(T))\subseteq\overline{D_{|\alpha|\omega}},
\end{equation*}
is the same as in the proof of Theorem~\ref{thm_palpha_bisectorial_positive}. The main difference is that, for negative powers $\alpha<0$, the function $h_s\overline{s}-p_\alpha h_s$, which represents $S_L^{-1}(s,p_\alpha(T))$ in \eqref{Eq_palpha_bisectorial_5}, has the limits $0$ as $\xi\rightarrow 0$ and $\frac{1}{s}$ as $\xi\rightarrow\infty$. Hence we need to extract the limit $\frac{1}{s}$ at $\xi=\infty$ from this function, in order to apply the $\omega$-functional calculus. This means, we decompose the $S$-resolvent operator into
\begin{equation}\label{Eq_palpha_bisectorial_negative_2}
S_L^{-1}(s,p_\alpha(T))=T^2(|s|^{\frac{2}{\alpha}}+T^2)^{-1}\frac{1}{s}+\Big(\underbrace{h_s(\xi)\overline{s}-p_\alpha(\xi)h_s(\xi)-\frac{\xi^2}{|s|^{\frac{2}{\alpha}}+\xi^2}\frac{1}{s}}_{=:g_s(\xi)}\Big)(T).
\end{equation}
The remaining task is now to estimate the operator $g_s(T)$, defined via the $\omega$-functional calculus of Definition~\ref{defi_Omega}. First, for every $\xi\in\overline{D_{\varphi'}}$, we can estimate
\begin{align*}
|g_s(\xi)|&=\bigg|\frac{1}{s-p_\alpha(\xi)}-\frac{\xi^2}{s(|s|^{\frac{2}{\alpha}}+\xi^2)}\bigg|\leq\frac{|s|^{1+\frac{2}{\alpha}}+|\xi|^{2+\alpha}}{|s||s-p_\alpha(\xi)|||s|^{\frac{2}{\alpha}}+\xi^2|} \\
&\leq\frac{4}{|e^{J\varphi}-e^{J|\alpha|\varphi'}||1+e^{2J\varphi'}|}\frac{|s|^{1+\frac{2}{\alpha}}+|\xi|^{2+\alpha}}{|s|(|s|+|\xi|^\alpha)(|s|^{\frac{2}{\alpha}}+|\xi|^2)},
\end{align*}
where in the first equality we used that $p_\alpha(\xi)\in\mathbb{C}_J$ is in the same complex plane as $s$ and $\xi \in \mathbb{C}_J$, and hence the term $h_s(\xi)\overline{s}-p_{-\alpha}(\xi)h_s(\xi)$ reduces to $(s-p_\alpha(\xi))^{-1}$. Moreover, in the second line, we used the same estimates as in \eqref{Eq_palpha_bisectorial_7} and \eqref{Eq_palpha_bisectorial_8}. Now that we have estimated the function $g_s$, we can combine it with the bound \eqref{Eq_SL_estimate} of the $S$-resolvent of $T$, to get an upper bound of the $\omega$-functional calculus
\begin{align}
\Vert g_s(T)\Vert&=\frac{1}{2\pi}\bigg\Vert\int_{\partial D_{\varphi'}\cap\mathbb{C}_J}S_L^{-1}(\xi,T)d\xi_Jg_s(\xi)\bigg\Vert \notag \\
&\leq\frac{8C_{\varphi'}}{\pi|e^{J\varphi}-e^{J|\alpha|\varphi'}||1+e^{2J\varphi'}|}\int_0^\infty\frac{|s|^{1+\frac{2}{\alpha}}+r^{2+\alpha}}{r|s|(|s|+r^\alpha)(|s|^{\frac{2}{\alpha}}+r^2)}dr \notag \\
&=\frac{8C_{\varphi'}}{\pi|e^{J\varphi}-e^{J|\alpha|\varphi'}||1+e^{2J\varphi'}||s|}\int_0^\infty\frac{1+r^{2+\alpha}}{r(1+r^\alpha)(1+r^2)}dr \notag \\
&=\frac{8(1+\frac{2}{|\alpha|})C_{\varphi'}}{\pi|e^{J\varphi}-e^{J|\alpha|\varphi'}||1+e^{2J\varphi'}||s|}, \label{Eq_palpha_bisectorial_negative_1}
\end{align}
where in the third line we substituted $r\rightarrow|s|^{\frac{1}{\alpha}}r$, and in the last we used $1+r^\alpha\geq\max\{1,r^\alpha\}$ as well as $1+r^2\geq\max\{1,r^2\}$ in order to compute the integral explicitly. Finally, since we can use the pseudo resolvent operator \eqref{Eq_Qs}, to write
\begin{equation*}
T^2(|s|^{\frac{2}{\alpha}}+T^2)^{-1}=1-|s|^{\frac{2}{\alpha}}Q_{|s|^{\frac{1}{\alpha}}J}[T]^{-1},
\end{equation*}
we can combine the estimate \cite[Lemma 3.2 ii)]{CMS25} of the operator $Q_{|s|^{\frac{1}{\alpha}}J}[T]^{-1}$ with the estimate \eqref{Eq_palpha_bisectorial_negative_1}, to estimate the full $S$-resolvent operator \eqref{Eq_palpha_bisectorial_negative_2} by
\begin{align*}
\Vert S_L^{-1}(s,p_\alpha(T))\Vert&\leq\frac{1}{|s|}\Big\Vert 1-|s|^{\frac{2}{\alpha}}Q_{|s|^{\frac{1}{\alpha}}J}[T]^{-1}\Big\Vert+\Vert g_s(T)\Vert \\
&\leq\bigg(1+2C_\varphi^2+\frac{8(1+\frac{2}{|\alpha|})C_{\varphi'}}{\pi|e^{J\varphi}-e^{J|\alpha|\varphi'}||1+e^{2J\varphi'}|}\bigg)\frac{1}{|s|}.
\end{align*}
ii)\;\;The proof of $q_\alpha(T)$ being sectorial follows the same arguments, with the minor changes mentioned in the proof of Theorem~\ref{thm_palpha_bisectorial_positive}~ii).
\end{proof}

\begin{prop}
Let $T\in\mathcal{K}(V)$ be injective and bisectorial. Then for every $\alpha\in\mathbb{R}$, $p_\alpha(T)$ and $q_\alpha(T)$ are injective, with
\begin{equation*}
p_\alpha(T)^{-1}=p_{-\alpha}(T)\qquad\text{and}\qquad q_\alpha(T)^{-1}=q_{-\alpha}(T).
\end{equation*}
\end{prop}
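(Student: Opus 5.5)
The plan is to combine the product rule of the $H^\infty$-functional calculus, \cite[Theorem~5.7]{MS24}, with the pointwise identities
\begin{equation*}
p_\alpha(s)p_{-\alpha}(s)=\sgn(s)^2=1,\qquad q_\alpha(s)q_{-\alpha}(s)=1,\qquad\Sc(s)\neq 0,
\end{equation*}
which follow directly from \eqref{Eq_palpha_qalpha}. Indeed, for $\Sc(s)<0$ one has $(-(-s)^\alpha)(-(-s)^{-\alpha})=1$. We treat only $p_\alpha$; the case of $q_\alpha$ is identical. The case $\alpha=0$ is included, with $p_0=\sgn$ satisfying $p_0^2=1$.

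\textbf{Step 1: $p_{-\alpha}(T)p_\alpha(T)\subseteq 1$.} By Definition~\ref{defi_palphaT_negative} both $p_\alpha(T)$ and $p_{-\alpha}(T)$ are defined via the $H^\infty$-calculus, with regularizer $e(s)=\frac{s^n}{(1+s^2)^n}$ for $n>|\alpha|$. The product rule \cite[Theorem~5.7]{MS24} gives
\begin{equation*}
p_{-\alpha}(T)p_\alpha(T)\subseteq(p_{-\alpha}p_\alpha)(T)=1(T)=1,
\end{equation*}
where the constant function $1$ is handled by the extended $\omega$-functional calculus \eqref{Eq_Extended} and yields the identity. Consequently, for every $v\in\dom(p_\alpha(T))$ we have $p_\alpha(T)v\in\dom(p_{-\alpha}(T))$ and $p_{-\alpha}(T)p_\alpha(T)v=v$. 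This already proves that $p_\alpha(T)$ is injective and that $p_\alpha(T)^{-1}\subseteq p_{-\alpha}(T)$.

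\textbf{Step 2: reverse inclusion.} Applying Step 1 with $-\alpha$ in place of $\alpha$ gives $p_\alpha(T)p_{-\alpha}(T)v=v$ for all $v\in\dom(p_{-\alpha}(T))$. Hence every $v\in\dom(p_{-\alpha}(T))$ lies in $\ran(p_\alpha(T))$, and $p_\alpha(T)^{-1}v=p_{-\alpha}(T)v$. This shows $\dom(p_{-\alpha}(T))\subseteq\ran(p_\alpha(T))=\dom(p_\alpha(T)^{-1})$. Combined with Step 1 we obtain equality of domains and actions, that is, $p_\alpha(T)^{-1}=p_{-\alpha}(T)$.

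\textbf{Main obstacle.} The delicate point is the exact form of the product rule \cite[Theorem~5.7]{MS24}. We need the inclusion $g(T)f(T)\subseteq(gf)(T)$ for $f=p_\alpha$ and $g=p_{-\alpha}$, which are two unbounded functions growing at opposite ends ($0$ resp.\ $\infty$). We also need it in the regularized setting, where the common regularizer $e^2$ serves for the product. I would first check that the hypotheses of that theorem only require $f$, $g$ and $gf$ to be regularizable, which holds here by Remark~\ref{rem_Polynomially_growing_functions}~ii) and the injectivity of $T$.

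If that theorem needs more, I would argue directly from the definitions. Write $p_\alpha(T)v=e(T)^{-1}(ep_\alpha)(T)v$. Using the identities $(ep_{-\alpha})(T)\,e(T)^{-1}\subseteq e(T)^{-1}(ep_{-\alpha})(T)$ and the product rule of the extended $\omega$-calculus, one computes $(e^2p_{-\alpha}p_\alpha)(T)v=e(T)^2v$. Injectivity of $e(T)$ then gives the claim.
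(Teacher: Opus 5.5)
Your proof is correct and is essentially the paper's argument. The paper unfolds the product rule by hand, showing $(ep_{-\alpha})(T)p_\alpha(T)\subseteq e(T)$ directly from the definition with the regularizer \eqref{Eq_Regularizer_negative}, which is exactly your fallback, and it then runs the same two inclusions $p_\alpha(T)^{-1}\subseteq p_{-\alpha}(T)$ and its converse. The one point to make explicit is that your ``Consequently'' in Step~1 needs the domain identity $\dom(p_{-\alpha}(T)p_\alpha(T))=\dom(1)\cap\dom(p_\alpha(T))=\dom(p_\alpha(T))$ from \cite[Theorem~5.7]{MS24}; the bare operator inclusion $p_{-\alpha}(T)p_\alpha(T)\subseteq 1$ would by itself only give injectivity.
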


\begin{proof}
We will only consider $p_\alpha(T)$, the proof for $q_\alpha(T)$ is analog. For the inclusion $\text{\grqq}\subseteq\text{\grqq}$ we know that by the definition of the $H^\infty$-functional calculus, with the regularizer $e$ from \eqref{Eq_Regularizer_negative}, there is
\begin{equation}\label{Eq_palpha_injective_1}
(ep_{-\alpha})(T)p_\alpha(T)=(ep_{-\alpha})(T)e(T)^{-1}(ep_\alpha)(T)\subseteq e(T)^{-1}(ep_{-\alpha}ep_\alpha)(T)=e(T).
\end{equation}
Since $e(T)$ is injective, this equation shows on the one hand that $p_\alpha(T)$ is injective. This equation also shows that $p_\alpha(T)v\in\dom(p_{-\alpha}(T))$, for every $v\in\dom(p_\alpha(T))$, and $p_{-\alpha}(T)p_\alpha(T)v=v$. In other words this proves the inclusion $p_\alpha(T)^{-1}\subseteq p_{-\alpha}(T)$. \medskip

For the inverse inclusion $\text{\grqq}\supseteq\text{\grqq}$, we obtain
\begin{equation*}
(ep_\alpha)(T)p_{-\alpha}(T)\subseteq e(T),
\end{equation*}
in the same way as in \eqref{Eq_palpha_injective_1}. This proves that $p_{-\alpha}(T)v\in\dom(p_\alpha(T))$ for every $v\in\dom(p_{-\alpha}(T))$, and $p_\alpha(T)p_{-\alpha}(T)v=v$. Consequently, $v\in\dom(p_{\alpha}(T)^{-1})$.
\end{proof}

Next, under the additional assumption that $T$ is injective, we extend the composition rule of Proposition~\ref{prop_Composition_rule_not_injective} also to negative values of $\alpha$ and $\beta$.

\begin{prop}\label{prop_Composition_rule_injective}
Let $T\in\mathcal{K}(V)$ be injective and bisectorial of angle $\omega\in(0,\frac{\pi}{2})$, and $\alpha\in(-\frac{\pi}{2\omega},\frac{\pi}{2\omega})\setminus\{0\}$. Consider a function $f\in\mathcal{SH}_L(D_\theta)$, with $\theta\in(\alpha\omega,\frac{\pi}{2}))$, which satisfies the estimate
\begin{equation}\label{Eq_Composition_rule_injective}
|f(s)|\leq C\Big(\frac{1}{|s|^\gamma}+|s|^\gamma\Big),\qquad s\in D_\theta,
\end{equation}
for some $\gamma>0$ and $C\geq 0$. Then $f\circ p_\alpha\in\mathcal{SH}_L(D_{\theta'})$, for every $\theta'<\min\{\frac{\theta}{|\alpha|},\frac{\pi}{2}\}$ and there holds \medskip

\begin{enumerate}
\item[i)] $f(p_\alpha(T))=(f\circ p_\alpha)(T)$, \medskip

\item[ii)] $f(q_\alpha(T))=(f\circ q_\alpha)(T)$.
\end{enumerate}
\end{prop}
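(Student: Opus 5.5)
The plan is to mirror the three-step proof of Proposition~\ref{prop_Composition_rule_not_injective}, with the regularizer at the origin adapted to the injective setting. Only i) is treated; ii) follows identically with $q_\alpha$ and sectors in place of double sectors. By the preceding theorem, $p_\alpha(T)$ is bisectorial of angle $|\alpha|\omega$ and, by the preceding proposition, injective. Hence $f(p_\alpha(T))$ is well defined via Remark~\ref{rem_Polynomially_growing_functions}~ii).

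Holomorphy of $f\circ p_\alpha$ on $D_{\theta'}$ follows because $p_\alpha$ maps $D_{\theta'}$ into $D_{|\alpha|\theta'}\subseteq D_\theta$ and is intrinsic. The composition of a left slice hyperholomorphic function with an intrinsic one is again left slice hyperholomorphic. The estimate $|p_\alpha(\xi)|=|\xi|^\alpha$ then gives $|(f\circ p_\alpha)(\xi)|\leq C(|\xi|^{-|\alpha|\gamma}+|\xi|^{|\alpha|\gamma})$, so $(f\circ p_\alpha)(T)$ is also defined.

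\emph{Step 1} treats $f\in\mathcal{SH}_L^0(D_\theta)$. For $\alpha>0$ this is exactly the first step of Proposition~\ref{prop_Composition_rule_not_injective}. For $\alpha<0$ I will use the decomposition \eqref{Eq_palpha_bisectorial_negative_2} of $S_L^{-1}(s,p_\alpha(T))$ into $T^2(|s|^{2/\alpha}+T^2)^{-1}\frac1s+g_s(T)$, normalized with $|s|$ replaced by $1$ for fixed $s$, that is, $T^2(1+T^2)^{-1}\frac1s+g_s(T)$. The first term integrates against $ds_Jf(s)$ to zero by \eqref{Eq_Composition_positive_3}. The second term is written as an $\omega$-calculus integral over $\partial D_{\varphi'}\cap\mathbb{C}_J$. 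The pointwise bound on $g_s$ from the negative-power bisectoriality proof, combined with the decay of $f$, gives absolute integrability of the double integral, so Fubini applies. The subtracted term $\frac{\xi^2}{1+\xi^2}\frac1s$ again integrates to zero in $s$. Since $p_\alpha(\xi)\in D_\varphi\cap\mathbb{C}_J$ for $|\alpha|\varphi'<\varphi$, the Cauchy formula yields $\frac{1}{2\pi}\int_{\partial D_\varphi\cap\mathbb{C}_J}(s-p_\alpha(\xi))^{-1}ds_Jf(s)=f(p_\alpha(\xi))$. I have not verified that the Cauchy formula holds with this sign and orientation on the unbounded boundary for $\alpha<0$. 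The orientation is unaffected because $p_\alpha$ is applied inside, and the contribution at infinity vanishes by the decay of $f$.

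\emph{Step 2} covers the extended calculus. The injective setting suggests using the basis $1$, $\frac{1}{1+s^2}$, $\frac{s^2}{1+s^2}$ and handling constants trivially. The needed identity is $(1+p_\alpha(T)^2)^{-1}=(\frac{1}{1+p_\alpha^2})(T)$. For $\alpha>0$ this is \eqref{Eq_Composition_positive_9}. For $\alpha<0$ I will prove it the same way, using the product rule of Theorem~\ref{thm_Product_rule_negative_powers} together with \cite[Theorem~5.7]{MS24}. Here $\frac{1}{1+p_\alpha^2}$ is bounded, tends to $1$ at $0$ and to $0$ at $\infty$ when $\alpha<0$, so it lies in $\mathcal{N}^\bnd$.

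\emph{Step 3} is the general case. Take the regularizer $e(s)=\frac{s^n}{(1+s^2)^n}$ with $n>\gamma$. Then $e$ and $ef$ belong to $\mathcal{N}^\bnd$ and $\mathcal{SH}_L^\bnd$ respectively, and Step 2 gives $e(p_\alpha(T))=(e\circ p_\alpha)(T)$ and $(ef)(p_\alpha(T))=((ef)\circ p_\alpha)(T)$. One must check that $e\circ p_\alpha$ is an admissible regularizer for $f\circ p_\alpha$. It is in $\mathcal{N}^\bnd$, and $(e\circ p_\alpha)(T)=e(p_\alpha(T))$ is injective because $p_\alpha(T)$ is injective. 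The chain \eqref{Eq_Composition_positive_7} then concludes the proof.

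The main obstacle is Step 1 for $\alpha<0$: the inner/outer integrability estimate. The bound on $g_s(\xi)$ mixes $|s|$ with $|\xi|^\alpha$ for negative $\alpha$, so the behaviour near $\xi\to0$ and $\xi\to\infty$ swaps roles compared with the positive case. The decay of $f$ at both ends is what has to compensate for this.
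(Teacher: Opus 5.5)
Your proposal is correct and follows the paper's proof: an $\omega$-calculus step for $\alpha<0$ via $S_L^{-1}(s,p_\alpha(T))=T^2(1+T^2)^{-1}\frac{1}{s}+g_s(T)$, Fubini and the Cauchy formula, followed by the regularization chain \eqref{Eq_Composition_positive_7} with $e(s)=\frac{s^n}{(1+s^2)^n}$, $n>\gamma$. Your Step~2 is valid but unnecessary, because $e$ and $ef$ already lie in $\mathcal{N}^0(D_\theta)$ and $\mathcal{SH}_L^0(D_\theta)$, so Step~1 feeds directly into Step~3; this is why the paper's proof has only two steps.
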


\begin{proof}
The case $\alpha>0$ is already considered in Proposition~\ref{prop_Composition_rule_not_injective}. Hence we will only consider the case $\alpha<0$. We will also only prove i), then ii) follows similarly. We will do the proof in two steps, first for the $\omega$-functional calculus \eqref{Eq_Omega}, and then for the $H^\infty$-functional calculus \eqref{Eq_Hinfty}. \medskip

In the \textit{first step} let us assume that $f\in\mathcal{SH}_L^0(D_\theta)$. Then the proof of
\begin{equation*}
f(p_\alpha(T))=(f\circ p_\alpha)(T)
\end{equation*}
works in the same way as in the proof of Proposition~\ref{prop_Composition_rule_not_injective}, with the only difference that we use the representation
\begin{equation*}
S_L^{-1}(s,p_\alpha(T))=T^2(1+T^2)^{-1}\frac{1}{s}+\Big(\underbrace{h_s(\xi)\overline{s}-p_\alpha(\xi)h_s(\xi)-\frac{\xi^2}{1+\xi^2}\frac{1}{s}}_{=:g_s(\xi)}\Big)(T),
\end{equation*}
of the $S$-resolvent operator, instead of \eqref{Eq_Composition_positive_4}, which then leads to the estimate
\begin{equation*}
|S_L^{-1}(\xi,T)g_s(\xi)f(s)|\leq\frac{4C_\varphi C_\beta}{|e^{J\varphi}-e^{J|\alpha|\varphi'}||1+e^{2J\varphi'}|}\frac{1}{|\xi|}\frac{|s|+|\xi|^{2+\alpha}}{|s|(|s|+|\xi|^\alpha)(1+|\xi|^2)}\frac{|s|^\beta}{1+|s|^{2\beta}},
\end{equation*}
instead of \eqref{Eq_Composition_positive_6}. \medskip

In the \textit{second step}, we will now prove the composition rule for the $H^\infty$-functional calculus. To do so, let $e$ be as in \eqref{Eq_Regularizer_negative} with $n>\gamma$, a regularizer of $f$. Then we know from the first step that $e(p_\alpha(T))=(e\circ p_\alpha)(T)$ and $(ef)(p_\alpha(T))=((ef)\circ p_\alpha)(T)$, and as in \eqref{Eq_Composition_positive_7}, we then prove
\begin{equation*}
f(p_\alpha(T))=(f\circ p_\alpha)(T). \qedhere
\end{equation*}
\end{proof}

\begin{thm}
Let $T\in\mathcal{K}(V)$ be injective and bisectorial of angle $\omega\in(0,\frac{\pi}{2})$. Then for every $\alpha\in(-\frac{\pi}{2\omega},\frac{\pi}{2\omega})\setminus\{0\}$, and every $\beta\in\mathbb{R}$, we get \medskip

\begin{minipage}{0.49\textwidth}
\begin{enumerate}
\item[i)] $p_\beta(p_\alpha(T))=p_{\beta\alpha}(T)$, \medskip

\item[ii)] $q_\beta(p_\alpha(T))=q_{\beta\alpha}(T)$,
\end{enumerate}
\end{minipage}
\begin{minipage}{0.49\textwidth}
\begin{enumerate}
\item[iii)] $p_\beta(q_\alpha(T))=q_{\beta\alpha}(T)$, \medskip

\item[iv)] $q_\beta(q_\alpha(T))=q_{\beta\alpha}(T)$.
\end{enumerate}
\end{minipage}
\end{thm}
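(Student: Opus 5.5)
The plan follows the proof of Theorem~\ref{thm_Composition_positive_powers}: apply the general composition rule, now in the injective setting, to the functions $f=p_\beta$ and $f=q_\beta$, and then use scalar identities between these functions.

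\emph{Step 1 (admissibility).} We check that $f=p_\beta$ and $f=q_\beta$ satisfy the hypotheses of Proposition~\ref{prop_Composition_rule_injective}. Both are intrinsic and slice hyperholomorphic on every $D_\theta$, with $\theta\in(|\alpha|\omega,\frac{\pi}{2})$. Since $|p_\beta(s)|=|q_\beta(s)|=|s|^\beta$, estimate \eqref{Eq_Composition_rule_injective} holds with $\gamma=|\beta|$ if $\beta\neq0$. For $\beta=0$ it holds with any $\gamma>0$, because then the functions are bounded.

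\emph{Step 2 (well-posedness of the left-hand side).} The expressions $p_\beta(p_\alpha(T))$ and $q_\beta(p_\alpha(T))$ must make sense via Definition~\ref{defi_palphaT_negative}. For this, $p_\alpha(T)$ and $q_\alpha(T)$ must be injective and bisectorial, respectively sectorial. Injectivity is given by the proposition stating $p_\alpha(T)^{-1}=p_{-\alpha}(T)$. Bisectoriality of angle $|\alpha|\omega<\frac{\pi}{2}$ for $\alpha\in(-\frac{\pi}{2\omega},\frac{\pi}{2\omega})\setminus\{0\}$ is given by the preceding theorem on negative powers.

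\emph{Step 3 (the case $q_\alpha(T)$).} For items iii) and iv), $q_\alpha(T)$ is only sectorial. I would argue as in the earlier theorem: a sectorial operator of angle $|\alpha|\omega<\frac{\pi}{2}$ has spectrum inside $\overline{D_{|\alpha|\omega}}$. Its left $S$-resolvent bound on $\mathbb{R}^{n+1}\setminus D_\varphi$ follows from the sectorial bound on the larger complement region. Hence $q_\alpha(T)$ can be treated as bisectorial, and Proposition~\ref{prop_Composition_rule_injective}~ii) applies.

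\emph{Step 4 (scalar identities).} The proposition then gives $f(p_\alpha(T))=(f\circ p_\alpha)(T)$ and $f(q_\alpha(T))=(f\circ q_\alpha)(T)$. It remains to verify, for $\Sc(s)\neq0$ and all real $\alpha,\beta$ (negative ones included), the identities
\begin{gather*}
p_\beta\circ p_\alpha=p_{\beta\alpha},\qquad q_\beta\circ p_\alpha=q_{\beta\alpha},\\
p_\beta\circ q_\alpha=q_\beta\circ q_\alpha=q_{\beta\alpha}.
\end{gather*}
The key facts are the following. For $\Sc(s)>0$ we have $\Sc(s^\alpha)>0$, since $|\alpha\arg s|<\frac{\pi}{2}$ by the angle restriction, so $(s^\alpha)^\beta=s^{\alpha\beta}$ because the logarithm in \eqref{Eq_ln} is additive along $\mathbb{C}_J$. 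For $\Sc(s)<0$, the sign is carried through by $p$ and removed by $q$. Negative $\alpha$ changes nothing, because $\ln(s^\alpha)=\alpha\ln s$ still lies in the principal strip.

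\emph{Main obstacle.} The hardest point is the application of Proposition~\ref{prop_Composition_rule_injective} at $\beta=0$ and the exact angle bookkeeping, ensuring $\theta\in(|\alpha|\omega,\frac{\pi}{2})$ and that $p_\alpha(\xi)\in D_\theta$. At $\beta=0$ I would exploit the fact that $p_0(T)$ is interpreted through the injective regularizer \eqref{Eq_Regularizer_negative}, which the proposition already covers.
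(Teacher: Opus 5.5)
Your proposal is correct and takes the same route as the paper: apply Proposition~\ref{prop_Composition_rule_injective} to $f=p_\beta$ and $f=q_\beta$, which satisfy \eqref{Eq_Composition_rule_injective}, and combine it with the scalar identities i)--iv) from the proof of Theorem~\ref{thm_Composition_positive_powers}. Your Steps~2--3 make explicit what the paper leaves implicit, namely that $p_\alpha(T)$ is injective and bisectorial and that the sectorial operator $q_\alpha(T)$, of angle $|\alpha|\omega<\frac{\pi}{2}$, may be treated as bisectorial; one wording fix is that the scalar identities hold only on $D_{\theta'}$ with $|\alpha|\theta'<\frac{\pi}{2}$ and not for every $\Sc(s)\neq0$ (they fail when $|\alpha|>1$), but that restricted domain is exactly what the composition rule needs.
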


\begin{proof}
Since both functions $p_\beta$ and $q_\beta$ satisfy the bound \eqref{Eq_Composition_rule_injective}, all four identities follow immediately from Proposition~\ref{prop_Composition_rule_injective} as well as the function identities i)--iv) in the proof of Theorem~\ref{thm_Composition_positive_powers}.
\end{proof}

While the product rules for fractional powers in Theorem~\ref{thm_Product_rule_negative_powers} only considers two positive or two negative powers, a respective product rule for mixed powers is missing. Since by Corollary~\ref{cor_Operator_domain_negative_powers} the operator domains of $p_\alpha(T)$ and $q_\alpha(T)$ become smaller when $|\alpha|$ increases, a product rule of the form in Theorem~\ref{thm_Product_rule_negative_powers} cannot hold true for arbitrary $\alpha,\beta\in\mathbb{R}$. However, we will show in Theorem~\ref{thm_Product_rule_mixed_powers} that there is a closure needed on the right hand side to make it an equality. But first, we need the following preparatory lemma.

\begin{lem}\label{lem_Core}
Let $T\in\mathcal{K}(V)$ be injective and bisectorial of angle $\omega\in(0,\frac{\pi}{2})$, with $\overline{\dom}(T)=\overline{\ran}(T)=V$. Consider also a function $f\in\mathcal{N}(D_\theta)$, $\theta\in(\omega,\frac{\pi}{2})$, which is bounded by
\begin{equation}\label{Eq_Core_bound}
|f(s)|\leq C\Big(\frac{1}{|s|^k}+|s|^k\Big),\qquad s\in D_\theta,
\end{equation}
for some $k\in\mathbb{N}$, $C\geq 0$. Then, for every integer $m\geq k+1$, we have
\begin{equation*}
\dom(T^m)\cap\ran(T^m)\subseteq\dom(f(T)),
\end{equation*}
and for every element $v\in\dom(f(T))$, there exists a sequence $(v_n)_n\in\dom(T^m)\cap\ran(T^m)$, such that
\begin{equation*}
v=\lim\limits_{n\rightarrow\infty}v_n\qquad\text{and}\qquad f(T)v=\lim\limits_{n\rightarrow\infty}f(T)v_n.
\end{equation*}
\end{lem}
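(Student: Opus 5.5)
For the domain inclusion I will use the regularizer $e(s)=\frac{s^m}{(1+s^2)^m}$. Since $m\geq k+1>k$, Remark~\ref{rem_Polynomially_growing_functions}~ii) makes it admissible for $f$. Indeed, $|e(s)f(s)|\lesssim\frac{|s|^{m-k}+|s|^{m+k}}{|1+s^2|^m}$, which decays like $|s|^{m-k}\geq|s|$ at $0$ and like $|s|^{k-m}\leq|s|^{-1}$ at $\infty$. Hence $ef\in\mathcal{N}^0(D_\theta)\subseteq\mathcal{N}^\bnd(D_\theta)$, and $(ef)(T)$ is given by the $\omega$-functional calculus and is bounded.

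Now let $v\in\dom(T^m)\cap\ran(T^m)$, say $v=T^mu$ with $u\in\dom(T^m)$, so that $u\in\dom(T^{2m})$. I want to show $(ef)(T)v\in\ran(e(T))=\ran(T^m(1+T^2)^{-m})=\dom(T^m)\cap\ran(T^m)$; this set equality holds because $(1+T^2)^{-m}$ commutes with $T$. The key point is that the bounded operator $(ef)(T)$ commutes with $T$ and with $(1+T^2)^{-1}$ on the appropriate domains (by \cite[Corollary~3.18]{MS24}), and hence it preserves both $\dom(T^m)$ and $\ran(T^m)$. This gives $(ef)(T)v\in\dom(T^m)\cap\ran(T^m)$, and therefore $v\in\dom(f(T))$ with $f(T)v=e(T)^{-1}(ef)(T)v$. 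This first step is routine.

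For the approximation I will use the cut-off operators $\chi_n(T)$ with
\[
\chi_n(s):=\Big(\frac{n^2s^2}{(1+n^2s^2)(1+\frac{s^2}{n^2})}\Big)^{m},
\]
which equals $(n^2T^2(1+n^2T^2)^{-1})^m(1+\frac{T^2}{n^2})^{-m}$. Each factor is bounded. Moreover, $\chi_n(T)$ maps $V$ into $\dom(T^{2m})\cap\ran(T^{2m})\subseteq\dom(T^m)\cap\ran(T^m)$: the factor $(1+T^2/n^2)^{-m}$ supplies the domain property, and the factor $T^{2m}(n^{-2}+T^2)^{-m}$ supplies the range property. I set $v_n:=\chi_n(T)v$.

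The main step is to show that $\chi_n(T)\to1$ strongly. The operators $\chi_n(T)$ are uniformly bounded by the bisectorial estimate \eqref{Eq_SL_estimate} (the standard bound $\sup_\lambda\|\lambda^2(\lambda^2+T^2)^{-1}\|<\infty$, cf.\ \cite[Lemma~3.2]{CMS25}), so it suffices to prove convergence on a dense set. For $w\in\dom(T^2)$ one has $(1+T^2/n^2)^{-1}w-w=-n^{-2}(1+T^2/n^2)^{-1}T^2w\to0$. For $w=T^2z\in\ran(T^2)$ one has $n^2T^2(1+n^2T^2)^{-1}w-w=-(1+n^2T^2)^{-1}w=-n^{-2}(n^{-2}+T^2)^{-1}T^2z\to0$. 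Here I need that $\dom(T^2)$ and $\ran(T^2)$ are dense, which follows from $\overline{\dom}(T)=\overline{\ran}(T)=V$ via the usual bisectorial argument: $(1+n^{-2}T^2)^{-1}\to1$ strongly on $\overline{\dom(T)}$, and similarly for the range. Getting these density and uniform-boundedness facts cleanly in the Clifford setting is the main obstacle I expect.

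Granting this, $v_n\to v$. Finally, $\chi_n$ is intrinsic and its operator is bounded, so the product rule \cite[Theorem~5.7]{MS24} holds with equality when the bounded factor stands on the right; also $\chi_n(T)$ commutes with $f(T)$. Hence $f(T)v_n=f(T)\chi_n(T)v=\chi_n(T)f(T)v\to f(T)v$ for $v\in\dom(f(T))$, by the same strong convergence.
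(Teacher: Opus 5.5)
Your proof is correct and follows the paper's route: the same regularizer $e(s)=\frac{s^m}{(1+s^2)^m}$ with $\ran(e(T))=\dom(T^m)\cap\ran(T^m)$, and your cut-offs $\chi_n(T)$ are exactly the paper's $r_n[T]^{\widetilde m}$ with $\widetilde m=m$; the paper simply cites their strong convergence from \cite[Eq.(3.17)]{CMS25}, while you sketch it via uniform boundedness and density. Your final step $f(T)\chi_n(T)v=\chi_n(T)f(T)v$ is precisely what the statement needs; the paper's text writes this commutation with $T$ in place of $f(T)$, which is a slip.
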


\begin{proof}
Let us choose the function $e(s)=\frac{s^m}{(1+s^2)^m}$, which is, since $m\geq k+1$, a regularizer of the function $f$. Then we know that $e(T)=T^m(1+T^2)^{-m}$ by \cite[Theorem~3.16]{MS24}, and since we explicitly know the range of this operator, we get
\begin{equation*}
\dom(T^m)\cap\ran(T^m)=\ran(e(T))\subseteq\dom(f(T)),
\end{equation*}
where the last inclusion follows from $e(T)(ef)(T)=(ef)(T)e(T)$, see \cite[Corollary~3.18~iii)]{MS24}. Next, consider for every $n\in\mathbb{N}$ the bounded operator
\begin{equation*}
r_n[T]:=n^2T^2(T^2+n^2)^{-1}\Big(T^2+\frac{1}{n^2}\Big)^{-1}.
\end{equation*}
Choosing now any integer $\widetilde{m}\geq\frac{m}{2}$, we can prove as in \cite[Eq.(3.17)]{CMS25}, that there converges
\begin{equation}\label{Eq_Core_2}
\lim\limits_{n\rightarrow\infty}r_n[T]^{\widetilde{m}}v=v,\qquad v\in V,
\end{equation}
where $\overline{\dom}(T)\cap\overline{\ran}(T)=V$ by the assumption of our lemma. Since we can write the $\widetilde{m}$-th power of $r_n[T]$ as
\begin{equation*}
r_n[T]^{\widetilde{m}}=n^{2\widetilde{m}}T^{2\widetilde{m}}(T^2+n^2)^{-\widetilde{m}}\Big(T^2+\frac{1}{n^2}\Big)^{-\widetilde{m}},
\end{equation*}
it is obvious that
\begin{equation*}
\ran(r_n[T]^{\widetilde{m}})=\dom(T^{2\widetilde{m}})\cap\ran(T^{2\widetilde{m}})\subseteq\dom(T^m)\cap\ran(T^m).
\end{equation*}
Let now $v\in\dom(f(T))$. If we choose
\begin{equation*}
v_n:=r_n[T]^{\widetilde{m}}v\in\dom(T^m)\cap\ran(T^m),
\end{equation*}
then by \eqref{Eq_Core_2} we obtain the convergence
\begin{equation*}
v=\lim\limits_{n\rightarrow\infty}v_n.
\end{equation*}
Moreover, since there commutes $Tr_n[T]\supseteq r_n[T]T$, we can use the convergence \eqref{Eq_Core_2} with the vector $Tv$ instead of $v$, to get also the convergence
\begin{equation*}
Tv=\lim\limits_{n\rightarrow\infty}r_n[T]^{\widetilde{m}}Tv=\lim\limits_{n\rightarrow\infty}Tr_n[T]^{\widetilde{m}}v=\lim\limits_{n\rightarrow\infty}Tv_n. \qedhere
\end{equation*}
\end{proof}

The next theorem is another improvement of the classical product rule \cite[Theorem~5.7]{MS24} of the $H^\infty$-functional calculus, in the sense that the operator inclusion holds with equality when one takes the closure of the right hand side. Compared to Proposition~\ref{prop_Product_rule_positive} and Proposition~\ref{prop_Product_rule_negative}, we do not need $f$ and $g$ to be vanishing either at $0$ or at $\infty$. The improvement comes from the additional assumption that $T$ has dense domain and range.

\begin{prop}\label{prop_Product_rule_mixed}
Let $T\in\mathcal{K}(V)$ be injective and bisectorial of angle $\omega\in(0,\frac{\pi}{2})$, with $\overline{\dom}(T)=\overline{\ran}(T)=V$. Let $f,g\in\mathcal{N}(D_\theta)$, for some $\theta\in(\omega,\frac{\pi}{2})$, both satisfy \eqref{Eq_Core_bound}. Then $g(T)f(T)$ is closable, and there holds the product rule
\begin{equation}\label{Eq_Product_rule_mixed}
(gf)(T)=\overline{g(T)f(T)}.
\end{equation}
\end{prop}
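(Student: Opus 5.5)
We will prove the two inclusions $\overline{g(T)f(T)}\subseteq(gf)(T)$ and $(gf)(T)\subseteq\overline{g(T)f(T)}$; closability of $g(T)f(T)$ comes out of the first one.

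For the first inclusion we use the product rule \cite[Theorem~5.7]{MS24} of the $H^\infty$-functional calculus. It gives $g(T)f(T)\subseteq(gf)(T)$. Since $(gf)(T)$ is a closed operator (the $H^\infty$-functional calculus produces closed operators, being of the form $e(T)^{-1}(egf)(T)$ with $e(T)^{-1}$ closed and $(egf)(T)$ bounded), the operator $g(T)f(T)$ has a closed extension. Hence it is closable, and $\overline{g(T)f(T)}\subseteq(gf)(T)$.

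For the reverse inclusion we use Lemma~\ref{lem_Core}. The product $gf$ satisfies the bound \eqref{Eq_Core_bound} with exponent $2k$, since $(|s|^{-k}+|s|^k)^2\leq 2(|s|^{-2k}+|s|^{2k})$. We therefore choose an integer $m\geq 2k+1$. Let $v\in\dom((gf)(T))$. Lemma~\ref{lem_Core}, applied to $gf$, gives a sequence $v_n\in\dom(T^m)\cap\ran(T^m)$ with $v_n\to v$ and $(gf)(T)v_n\to(gf)(T)v$. It then suffices to show $v_n\in\dom(g(T)f(T))$. Indeed, the product rule gives $g(T)f(T)v_n=(gf)(T)v_n$, so $(v_n,g(T)f(T)v_n)\to(v,(gf)(T)v)$, which places $v$ in $\dom(\overline{g(T)f(T)})$ with the correct value.

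The main step is therefore to show $\dom(T^m)\cap\ran(T^m)\subseteq\dom(g(T)f(T))$. First, since $m\geq k+1$, Lemma~\ref{lem_Core} gives $v_n\in\dom(f(T))$. Next write $v_n=e(T)w$ with $e(s)=\frac{s^m}{(1+s^2)^m}$ and $w\in V$; this is possible because $\ran(e(T))=\dom(T^m)\cap\ran(T^m)$. Split $e=e_1e_2$ with $e_1(s)=\frac{s^{k+1}}{(1+s^2)^{k+1}}$ and $e_2(s)=\frac{s^{m-k-1}}{(1+s^2)^{m-k-1}}$, so that $e_2$ has exponent $m-k-1\geq k$. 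Then
\begin{equation*}
f(T)v_n=f(T)e_1(T)e_2(T)w=(fe_1)(T)e_2(T)w=e_2(T)(fe_1)(T)w.
\end{equation*}
Here $fe_1\in\mathcal{N}^0(D_\theta)$, so $(fe_1)(T)$ is bounded, and the bounded product rule and commutation \cite[Corollary~3.18]{MS24} justify the last two equalities. Consequently $f(T)v_n\in\ran(e_2(T))=\dom(T^{m-k-1})\cap\ran(T^{m-k-1})$.

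This is where the choice of $m$ matters. We need $m-k-1\geq k+1$, so that Lemma~\ref{lem_Core}, now applied to $g$, yields $\ran(e_2(T))\subseteq\dom(g(T))$. This forces $m\geq 2k+2$, and we fix $m=2k+2$ from the start; this is compatible with the requirement $m\geq 2k+1$ above. The delicate points are verifying that $e_2(T)$ commutes with $(fe_1)(T)$ on all of $V$, and that $f(T)e(T)=(fe_1)(T)e_2(T)$ holds as an equality on all of $V$ rather than only as an inclusion. Both should follow because all the operators involved are bounded, except $f(T)$, which is applied to elements of $\ran(e(T))\subseteq\dom(f(T))$.
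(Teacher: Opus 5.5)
Your proof is correct and follows essentially the paper's route: closability and $\overline{g(T)f(T)}\subseteq(gf)(T)$ come from the product rule and the closedness of $(gf)(T)$. The reverse inclusion comes from approximating $v\in\dom((gf)(T))$ in the graph norm by elements of $\ran(e(T))$ via Lemma~\ref{lem_Core}, since on $\ran(e(T))$ the product $g(T)f(T)$ is defined and agrees with $(gf)(T)$; the paper packages this step as the identity $\overline{(fe)(T)e(T)^{-1}}=f(T)$, applied to $f$ and to $gf$.

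The only difference is how you show $\ran(e(T))\subseteq\dom(g(T)f(T))$. The paper gets it directly from the fact that $g(T)(fe)(T)=(gfe)(T)$ is everywhere defined: both $(fe)(T)$ and $(gfe)(T)$ are bounded, so the domain formula of the product rule gives equality. This already works with exponent $2k+1$, which makes your splitting $e=e_1e_2$ and the choice $m=2k+2$ unnecessary. The same domain formula, $\dom(f(T)e_1(T))=\dom((fe_1)(T))\cap\dom(e_1(T))=V$, also settles the ``delicate points'' you flag.
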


\begin{proof}
According to \cite[Theorem~5.7]{MS24}, there holds the inclusion
\begin{equation*}
(gf)(T)\supseteq g(T)f(T).
\end{equation*}
Since $(gf)(T)$ is also closed by \cite[Lemma~5.5~i)]{MS24}, we have proven $\text{\grqq}\supseteq\text{\grqq}$ in \eqref{Eq_Product_rule_mixed}. \medskip

For the inclusion $\text{\grqq}\subseteq\text{\grqq}$, we will first prove that with the function
$$
e(s)=\frac{s^{2k+1}}{(1+s^2)^{2k+1}},
$$
 which is a regularizer of $f$, $g$ and of the product $gf$, there is
\begin{equation}\label{Eq_Product_rule_mixed_1}
\overline{(fe)(T)e(T)^{-1}}=f(T).
\end{equation}
By the commutation property \cite[Corollary~3.18~iii)]{MS24} of $e(T)$ and $(fe)(T)$, we obtain the operator inclusion
\begin{equation}\label{Eq_Product_rule_mixed_2}
(fe)(T)e(T)^{-1}=e(T)^{-1}(fe)(T)e(T)e(T)^{-1}\subseteq e(T)^{-1}(fe)(T)=f(T).
\end{equation}
Since $f(T)$ is also closed by \cite[Lemma~5.5]{MS24}, we conclude the first inclusion in \eqref{Eq_Product_rule_mixed_1}. \medskip

For the inverse inclusion, let $v\in\dom(f(T))$. Then by Lemma~\ref{lem_Core}, there exists a sequence $(v_n)_n\in\ran(e(T))=\dom(T^{2k+1})\cap\ran(T^{2k+1})$, such that
\begin{equation*}
\lim\limits_{n\rightarrow\infty}v_n=v\qquad\text{and}\qquad\lim\limits_{n\rightarrow\infty}f(T)v_n=f(T)v.
\end{equation*}
Since $v_n\in\ran(e(T))$, we can apply it to \eqref{Eq_Product_rule_mixed_2}, and write the second limit as
\begin{equation*}
\lim\limits_{n\rightarrow\infty}(fe)(T)e(T)^{-1}v_n=\lim\limits_{n\rightarrow\infty}f(T)v_n=f(T)v.
\end{equation*}
This shows that $v\in\dom\big(\overline{(fe)(T)e(T)^{-1}}\big)$ and $f(T)v=\overline{(fe)(T)e(T)^{-1}}v$. \medskip

Now that we have proven \eqref{Eq_Product_rule_mixed_1}, we can combine this result with the operator $g(T)$. First, since $(gfe)(T)$ is a bounded operator, we get
\begin{equation*}
(gfe)(T)e(T)^{-1}=g(T)(fe)(T)e(T)^{-1}\subseteq g(T)\overline{(fe)(T)e(T)^{-1}}=g(T)f(T),
\end{equation*}
where in the last equality we used \eqref{Eq_Product_rule_mixed_1}. If we now take the closure of this inclusion, and use again \eqref{Eq_Product_rule_mixed_1} with $gf$ instead of $f$, we obtain the remaining operator inclusion
\begin{equation*}
(gf)(T)= \overline{(gfe)(T)e(T)^{-1}}\subseteq\overline{g(T)f(T)}. \qedhere
\end{equation*}
\end{proof}

The next theorem is the remaining third one, finalizing together with Theorem~\ref{thm_Product_rule_positive_powers} and Theorem~\ref{thm_Product_rule_negative_powers} the power rule of the fractional powers. It gives the power rule for all possible exponents $\alpha,\beta\in\mathbb{R}$, but since for $\alpha$ and $\beta$ both being positive or both being negative we have already stronger results in Theorem~\ref{thm_Product_rule_positive_powers} and Theorem~\ref{thm_Product_rule_negative_powers}, the real application for the following result is for mixed positive and negative powers.

\begin{thm}\label{thm_Product_rule_mixed_powers}
Let $T\in\mathcal{K}(V)$ be injective and bisectorial, with $\overline{\dom}(T)=\overline{\ran}(T)=V$. Then for all $\alpha,\beta\in\mathbb{R}$ there hold the power rules \medskip

\begin{enumerate}
\item[i)] $p_{\alpha+\beta}(T)=\overline{p_\alpha(T)q_\beta(T)}=\overline{q_\alpha(T)p_\beta(T)}$ \medskip

\item[ii)] $q_{\alpha+\beta}(T)=\overline{p_\alpha(T)p_\beta(T)}=\overline{q_\alpha(T)q_\beta(T)}$
\end{enumerate}
\end{thm}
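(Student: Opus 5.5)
The plan is to reduce everything to Proposition~\ref{prop_Product_rule_mixed}, applied to $g$ and $f$ taken from the family $\{p_\alpha,q_\alpha,p_\beta,q_\beta\}$, combined with the pointwise identities \eqref{Eq_Product_rule_positive_powers_1}. The key observation is that those identities hold for every $\Sc(s)\neq 0$ and all real exponents $\alpha,\beta$, not only for positive ones. This is immediate from the definitions \eqref{Eq_palpha_qalpha}: on $\Sc(s)>0$ all the functions equal $s^\alpha$ or $s^\beta$. On $\Sc(s)<0$ the factor $-1$ appears exactly once in each of $p_\alpha q_\beta$ and $q_\alpha p_\beta$, and either zero times or twice in each of $p_\alpha p_\beta$ and $q_\alpha q_\beta$. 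The remaining input is that $(-s)^\alpha(-s)^\beta=(-s)^{\alpha+\beta}$ and $s^\alpha s^\beta=s^{\alpha+\beta}$, which hold because $\ln$ in \eqref{Eq_ln} is intrinsic and all these quantities lie in a common plane $\mathbb{C}_J$.

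Next I will check the hypotheses of Proposition~\ref{prop_Product_rule_mixed}. The operator $T$ is injective and bisectorial with dense domain and range, exactly as assumed there. The functions $p_\alpha,q_\alpha,p_\beta,q_\beta$ are intrinsic, so they lie in $\mathcal{N}(D_\theta)$ for every $\theta\in(\omega,\frac{\pi}{2})$, and they are slice hyperholomorphic on $D_\theta$ because $D_\theta$ avoids the imaginary hyperplane. Moreover $|p_\alpha(s)|=|q_\alpha(s)|=|s|^\alpha$, so the bound \eqref{Eq_Core_bound} holds with any integer $k\geq|\alpha|$ and $C=1$, and likewise for $\beta$. Hence Proposition~\ref{prop_Product_rule_mixed} applied with $g=p_\alpha$, $f=q_\beta$ gives $(p_\alpha q_\beta)(T)=\overline{p_\alpha(T)q_\beta(T)}$, and the left-hand side equals $p_{\alpha+\beta}(T)$ by the pointwise identity. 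The remaining three equalities follow in the same way from the pairs $(q_\alpha,p_\beta)$, $(p_\alpha,p_\beta)$ and $(q_\alpha,q_\beta)$.

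One consistency point needs attention. Here $p_\gamma(T)$ for arbitrary $\gamma\in\mathbb{R}$ means the $H^\infty$-functional calculus of Definition~\ref{defi_palphaT_negative}, and for $\gamma>0$ this should agree with Definition~\ref{defi_palphaT_positive}. This agreement follows from the independence of the regularizer in Definition~\ref{defi_Hinfty}. The case $\alpha+\beta=0$ also needs a comment. There $p_0=\sgn$ and $q_0=1$, and both are covered by Definition~\ref{defi_palphaT_negative} with any regularizer of order $n\geq 1$. In particular $q_0(T)=1$, and statement ii) then reads $1=\overline{q_\alpha(T)q_{-\alpha}(T)}$. This is consistent with the earlier proposition $q_\alpha(T)^{-1}=q_{-\alpha}(T)$ together with the density of $\dom(q_{-\alpha}(T))$.

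I expect no real obstacle, since all analytic work is contained in Lemma~\ref{lem_Core} and Proposition~\ref{prop_Product_rule_mixed}. The only delicate step is verifying the pointwise identities \eqref{Eq_Product_rule_positive_powers_1} for negative and mixed exponents with the sign conventions on $\Sc(s)<0$.
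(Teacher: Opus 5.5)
Your proposal is correct and follows the paper's proof: the paper also obtains the theorem by applying Proposition~\ref{prop_Product_rule_mixed} to the pairs drawn from $p_\alpha,q_\alpha,p_\beta,q_\beta$, together with the pointwise identities \eqref{Eq_Product_rule_positive_powers_1} for $\Sc(s)\neq 0$. Your checks of those identities for exponents of mixed sign, and of the bound \eqref{Eq_Core_bound} with an integer $k\geq|\alpha|$, supply details that the paper leaves implicit.
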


\begin{proof}
Since for every $\Sc(s)\neq 0$, there hold the function identities \eqref{Eq_Product_rule_positive_powers_1}, the stated product rules are a consequence of the product rule in Proposition~\ref{prop_Product_rule_mixed}.
\end{proof}

\begin{rem}
If the Banach module $V$ is $\mathbb{R}$-reflexive (or even a Hilbert space), the fact that $T$ is injective and bisectorial, already implies that $\overline{\dom}(T)=\overline{\ran}(T)=V$, see \cite[Theorem~3.3]{CMS25}. In particular, if $V$ is $\mathbb{R}$-reflexive, the statements of Lemma~\ref{lem_Core}, Proposition~\ref{prop_Product_rule_mixed} and Theorem~\ref{thm_Product_rule_mixed_powers} hold without the explicit assumption of dense domain and range.
\end{rem}

\subsection{Connection to fractional powers of $T^2$}

In this subsection, we will find a connection between the two versions $p_\alpha(T)$ and $q_\alpha(T)$ of fractional powers, to the classical fractional powers of the squared operator $T^2$. As classical fractional power we understand in this context the function $f(s)=s^\alpha$, defined for every $s\in\mathbb{R}^{n+1}\setminus(-\infty,0]$, and the corresponding operator $f(T)$ defined via the $H^\infty$-functional calculus of sectorial operators. Consequently, we first show that for any bisectorial operator $T$, the squared operator $T^2$ is sectorial.

\begin{lem}\label{lem_Spectrum_T2}
For every $T\in\mathcal{K}(V)$ with $\rho_S(T)\neq\emptyset$. Then $T^2\in\mathcal{K}(V)$ and its $S$-spectrum is given by
\begin{equation}\label{Eq_Spectrum_T2}
\sigma_S(T^2)=\sigma_S(T)^2.
\end{equation}
Moreover, for every $s\in\mathbb{R}^{n+1}$ with $s^2\in\rho_S(T^2)$, we have
\begin{subequations}
\begin{align}
2S_L^{-1}(s^2,T^2)s&=S_L^{-1}(s,T)-S_L^{-1}(-s,T),\qquad\text{and} \label{Eq_SL_T2} \\
2sS_R^{-1}(s^2,T^2)&=S_R^{-1}(s,T)-S_R^{-1}(-s,T). \label{Eq_SR_T2}
\end{align}
\end{subequations}
\end{lem}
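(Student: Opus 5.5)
The plan rests on an algebraic factorization of the pseudo-resolvent of $T^2$ into two pseudo-resolvents of $T$. For $s\in\mathbb{R}^{n+1}$ with $s\in\mathbb{C}_J$, the real polynomial $x^2-2\Sc(s^2)x+|s|^4$ has roots $s^2,\overline{s}^2$, so $x^4-2\Sc(s^2)x^2+|s|^4=(x-s)(x-\overline{s})(x+s)(x+\overline{s})$. Regrouping gives the identity of real polynomials
\begin{equation*}
X^4-2\Sc(s^2)X^2+|s|^4=(X^2-2s_0X+|s|^2)(X^2+2s_0X+|s|^2).
\end{equation*}
Substituting $T$ yields
\begin{equation*}
Q_{s^2}[T^2]=Q_s[T]\,Q_{-s}[T]=Q_{-s}[T]\,Q_s[T]\quad\text{on }\dom(T^4).
\end{equation*}
This holds because all coefficients are real and commute with $T$. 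It should be the key to every statement in the lemma.

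\textbf{Closedness of $T^2$.} Pick $s\in\rho_S(T)$. Then $Q_s[T]$ is closed with bounded inverse, and $T^2=Q_s[T]+2s_0T-|s|^2$ on $\dom(T^2)$. Since $TQ_s[T]^{-1}$ is bounded (it is $\overline{s}Q_s[T]^{-1}-S_L^{-1}(s,T)$ up to ordering of real scalars), $T$ is $Q_s[T]$-bounded with relative bound $0$. A standard graph-norm comparison then gives closedness of $T^2$. Alternatively, I can take $x_n\to x$, $T^2x_n\to y$ and apply the bounded $Q_s[T]^{-1}$ and $TQ_s[T]^{-1}$ to recover $Tx_n$ convergent, using that $T$ is closed.

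\textbf{Spectral identity \eqref{Eq_Spectrum_T2}.} I would show that $s^2\in\rho_S(T^2)$ iff $s,-s\in\rho_S(T)$. Since $\sigma_S(T)$ is axially symmetric, $-s\in\sigma_S(T)$ iff $-\overline{s}\in\sigma_S(T)$. Every paravector $p$ has square roots $\pm s$, and $-s$ is conjugate-equivalent to $\pm$ the other root. So $p\notin\sigma_S(T)^2$ iff both square roots lie in $\rho_S(T)$, which gives the identity.

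For the direction "$\Leftarrow$": the factorization shows $Q_{s^2}[T^2]^{-1}=Q_s[T]^{-1}Q_{-s}[T]^{-1}$, which is bounded. Here I use that the two inverses commute and that $Q_{-s}[T]^{-1}$ maps into $\dom(T^2)$ and $Q_s[T]^{-1}$ maps $\dom(T^2)$ into $\dom(T^4)$. This last point follows from commutation of $T$ with $Q_s[T]^{-1}$ on $\dom(T)$.

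For the direction "$\Rightarrow$": if $Q_{s^2}[T^2]^{-1}$ is bounded, then $Q_{-s}[T]Q_{s^2}[T^2]^{-1}$ is a right inverse of $Q_s[T]$ and $Q_{s^2}[T^2]^{-1}Q_{-s}[T]$ is a left inverse. Boundedness of the former comes from $T^kQ_{s^2}[T^2]^{-1}$ being bounded for $k\le 2$, by closed-graph arguments. Injectivity and surjectivity of $Q_s[T]$ follow, and symmetrically for $Q_{-s}[T]$.

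\textbf{Resolvent formulas.} I expect \eqref{Eq_SL_T2} and \eqref{Eq_SR_T2} to be the main computational obstacle, since $s$ does not commute with $T$-expressions written on the right. Write $A=Q_s[T]^{-1}$ and $B=Q_{-s}[T]^{-1}$, which commute. Then $S_L^{-1}(s,T)-S_L^{-1}(-s,T)=A\overline{s}+B\overline{s}-TA+TB$, while
\begin{equation*}
2S_L^{-1}(s^2,T^2)s=2AB\,\overline{s}^2s-2T^2AB\,s.
\end{equation*}
Using $A=BQ_{-s}[T]A$ and $B=AQ_s[T]B$, I can write $A+B=AB(Q_s[T]+Q_{-s}[T])=2AB(T^2+|s|^2)$ and $B-A=AB(Q_s[T]-Q_{-s}[T])=-4s_0TAB$. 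Then the difference equals $2AB(T^2+|s|^2)\overline{s}-4s_0T^2AB$. Comparing coefficients with real operator coefficients, I need the identities $|s|^2\overline{s}=\overline{s}^2s$ and $\overline{s}-2s_0=-s$, both of which hold. The right version follows identically with scalars placed on the left.
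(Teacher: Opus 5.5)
Your proposal is correct and follows the paper's proof essentially step for step. It uses the same factorization $Q_{s^2}[T^2]=Q_s[T]Q_{-s}[T]=Q_{-s}[T]Q_s[T]$, the resulting equivalence $s^2\in\rho_S(T^2)\Leftrightarrow s,-s\in\rho_S(T)$, and the same manipulation of $Q_s[T]^{-1}\pm Q_{-s}[T]^{-1}$ leading to \eqref{Eq_SL_T2}. The only difference is that you prove the closedness of $T^2$ and the transfer of bounded invertibility by hand, where the paper cites \cite{ADJOINT}; for closedness your sequence argument suffices, and the ``relative bound $0$'' claim is unneeded.

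One detail to tidy up: $Q_{s^2}[T^2]^{-1}Q_{-s}[T]$ is only a left inverse of $Q_s[T]$ on $\dom(T^4)$. For injectivity on all of $\dom(T^2)$, add the one-line bootstrap: $Q_s[T]x=0$ gives $T^2x=2s_0Tx-|s|^2x$, hence $x\in\dom(T^4)$.
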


\begin{proof}
First of all, since $\rho_S(T)\neq\emptyset$, it follows from \cite[Proposition~2.7]{ADJOINT} with $s=0$ that $T^2\in\mathcal{K}(V)$. Furthermore, for every $s\in\mathbb{R}^{n+1}$, the operator \eqref{Eq_Qs} satisfies
\begin{align}
Q_s[T]Q_{-s}[T]&=(T^2-2s_0T+|s|^2)(T^2+2s_0T+|s|^2) \notag \\
&=T^4+2(|s|^2-2s_0^2)T^2+|s|^4=Q_{s^2}[T^2]. \label{Eq_Spectrum_T2_1}
\end{align}
Using this equation with $s$ and once with $s$ replaced by $-s$, we have
\begin{equation*}
Q_{s^2}[T^2]\text{ is bijective}\quad\Leftrightarrow\quad Q_s[T],Q_{-s}[T]\text{ are bijective}.
\end{equation*}
By \cite[Remark~2.6]{ADJOINT} this equivalence can now also be written as
\begin{equation*}
s^2\in\rho_S(T^2)\quad\Leftrightarrow\quad s,-s\in\rho_S(T).
\end{equation*}
This proves the accordance of the $S$-spectra in \eqref{Eq_Spectrum_T2}. In order to show the relation between the $S$-resolvents \eqref{Eq_SL_T2}, let $s\in\mathbb{R}^{n+1}$ with $s^2\in\rho_S(T^2)$. Then there is $s,-s\in\rho_S(T)$ and all the pseudo-resolvents in \eqref{Eq_Spectrum_T2_1} are invertible, with
\begin{equation*}
Q_{-s}[T]^{-1}Q_s[T]^{-1}=Q_{s^2}[T^2]^{-1}.
\end{equation*}
Then, by the definition \eqref{Eq_SL_SR} of the left $S$-resolvent, we get
\begin{align*}
S_L^{-1}(s,T)-S_L^{-1}(-s,T)&=\big(Q_s[T]^{-1}+Q_{-s}[T]^{-1}\big)\overline{s}-T\big(Q_s[T]^{-1}-Q_{-s}[T]^{-1}\big) \\
&=2Q_s[T]^{-1}(T^2+|s|^2)Q_{-s}[T]^{-1}\overline{s}-TQ_s[T]^{-1}4s_0TQ_{-s}[T]^{-1} \\
&=2(T^2+|s|^2)Q_{s^2}[T^2]^{-1}\overline{s}-4s_0T^2Q_{s^2}[T^2]^{-1} \\
&=2|s|^2Q_{s^2}[T^2]^{-1}\overline{s}-2T^2Q_{s^2}[T^2]^{-1}s \\
&=\big(Q_{s^2}[T^2]^{-1}\overline{s}^2-T^2Q_{s^2}[T^2]^{-1}\big)2s=S_L^{-1}(s^2,T^2)2s.
\end{align*}
The similar relation \eqref{Eq_SR_T2} for the right $S$-resolvent operator follows analogously.
\end{proof}

With Lemma~\ref{lem_Spectrum_T2} we are now in the position to prove that $T^2$ is sectorial, if $T$ is bisectorial.

\begin{prop}
Let $T\in\mathcal{K}(V)$ be bisectorial of angle $\omega\in(0,\frac{\pi}{2})$. Then $T^2\in\mathcal{K}(V)$ is sectorial of angle $2\omega$.
\end{prop}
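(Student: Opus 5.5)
We derive both defining properties of sectoriality of angle $2\omega$ from Lemma~\ref{lem_Spectrum_T2}: the spectral inclusion comes from $\sigma_S(T^2)=\sigma_S(T)^2$, and the resolvent estimate from the identity \eqref{Eq_SL_T2}. Throughout, sectoriality of angle $2\omega$ means two things. First, $\sigma_S(T^2)\subseteq\overline{S_{2\omega}}$, where $S_\varphi$ is the open sector $\{re^{J\phi}\mid r>0,\,J\in\mathbb{S},\,|\phi|<\varphi\}$. Second, for every $\psi\in(2\omega,\pi)$ we have $\Vert S_L^{-1}(\sigma,T^2)\Vert\leq C_\psi/|\sigma|$ for all $\sigma\notin S_\psi\cup\{0\}$. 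Since $T$ is bisectorial, $\rho_S(T)\neq\emptyset$: any nonzero $s$ outside $\overline{D_\omega}$ will do. Lemma~\ref{lem_Spectrum_T2} therefore gives $T^2\in\mathcal{K}(V)$.

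\emph{Spectral inclusion.} Let $\sigma\in\sigma_S(T^2)$. By \eqref{Eq_Spectrum_T2} we can write $\sigma=s^2$ with $s\in\sigma_S(T)\subseteq\overline{D_\omega}$. Writing $s=re^{J\phi}$ with $\phi\in[-\omega,\omega]\cup[\pi-\omega,\pi+\omega]$, we get $s^2=r^2e^{2J\phi}$ and $2\phi\in[-2\omega,2\omega]$ modulo $2\pi$. Hence $s^2\in\overline{S_{2\omega}}$. Since $2\omega<\pi$, this sector is a proper sector.

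\emph{Resolvent estimate.} Fix $\psi\in(2\omega,\pi)$ and set $\varphi:=\psi/2\in(\omega,\frac{\pi}{2})$. Take $\sigma\notin S_\psi\cup\{0\}$ and write $\sigma\in\mathbb{C}_J$ as $\sigma=|\sigma|e^{J\chi}$ with $\chi\in[\psi,2\pi-\psi]$; for real negative $\sigma$ any $J$ works. Choose the square root $s:=|\sigma|^{1/2}e^{J\chi/2}$. Then $\chi/2\in[\varphi,\pi-\varphi]$, so $s\notin D_\varphi\cup\{0\}$. The same holds for $-s$, because $D_\varphi$ is symmetric under $s\mapsto-s$. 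Moreover $s^2=\sigma\in\rho_S(T^2)$ by the spectral inclusion, so \eqref{Eq_SL_T2} applies. Multiplying it on the right by $s^{-1}$, which is legitimate because $S_L^{-1}$ is right $\mathbb{R}_n$-linear in the scalar factor and $s$ is an invertible paravector, we obtain
\begin{equation*}
S_L^{-1}(\sigma,T^2)=\tfrac12\big(S_L^{-1}(s,T)-S_L^{-1}(-s,T)\big)s^{-1}.
\end{equation*}
The bisectorial estimate \eqref{Eq_SL_estimate} then gives
\begin{equation*}
\Vert S_L^{-1}(\sigma,T^2)\Vert\leq\tfrac12\cdot\frac{2C_\varphi}{|s|}\cdot\frac{1}{|s|}=\frac{C_\varphi}{|\sigma|}.
\end{equation*}
This is the sectorial estimate with $C_\psi:=C_\varphi$, and it completes the argument.

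The main obstacle is only bookkeeping, and it has two parts. The first is to check that the chosen square root $s$ together with $-s$ both avoid $D_\varphi$, including the boundary case of negative real $\sigma$; there $s$ is purely imaginary and hence outside $D_\varphi$. The second is to justify the norm estimate $\Vert A s^{-1}\Vert\leq\Vert A\Vert|s^{-1}|$ for paravectors in the Clifford module setting, which holds because right multiplication by a paravector is isometric up to the factor $|s|$.
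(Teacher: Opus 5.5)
Your proof is correct and follows the same route as the paper. The paper also gets the spectral inclusion from $\sigma_S(T^2)=\sigma_S(T)^2$, and the resolvent bound from \eqref{Eq_SL_T2} applied to a square root $s\notin D_\varphi\cup\{0\}$ (with $-s$ likewise) and divided by $|s|$. Your choice $\psi\in(2\omega,\pi)$, $\varphi=\psi/2$ is in fact the consistent form of the paper's angle bookkeeping, which as written takes $\varphi\in(2\omega,\pi)$ together with $S_{2\varphi}$.
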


\begin{proof}
Since $T$ is bisectorial, there is in particular $\rho_S(T)\neq\emptyset$ and hence $T^2\in\mathcal{K}(V)$ by Lemma~\ref{lem_Spectrum_T2}.
Since $\sigma_S(T)\subseteq\overline{D_\omega}$, we deduce from Lemma~\ref{lem_Spectrum_T2}, that
\begin{equation*}
\sigma_S(T^2)=\sigma_S(T)^2\subseteq(\overline{D_\omega})^2=\overline{S_{2\omega}}.
\end{equation*}
Moreover, let $\varphi\in(2\omega,\pi)$ and consider $q\in\mathbb{R}^{n+1}\setminus(S_{2\varphi}\cup\{0\})$. Then $q=s^2$, for some $s\in\mathbb{R}^{n+1}\setminus(D_\varphi\cup\{0\})$, and using the representation \eqref{Eq_SL_T2} of the left $S$-resolvent operator of $T^2$, we get the estimate
\begin{equation*}
\Vert S_L^{-1}(q,T^2)\Vert\leq\frac{\Vert S_L^{-1}(s,T)\Vert+\Vert S_L^{-1}(-s,T)\Vert}{2|s|}\leq\frac{C_\varphi}{|s|^2}=\frac{C_\varphi}{|q|}.
\end{equation*}
This proves that $T^2$ is a sectorial operator of angle $2\omega$.
\end{proof}

\begin{thm}\label{thm_palpha_qalpha_T2}
Let $T\in\mathcal{K}(V)$ be injective and bisectorial of angle $\omega\in(0,\frac{\pi}{2})$ and $\alpha\in\mathbb{R}$. Then we have the relations
\begin{equation*}
q_\alpha(T)=(T^2)^{\frac{\alpha}{2}}\qquad\text{and}\qquad p_\alpha(T)=\overline{\sgn(T)(T^2)^{\frac{\alpha}{2}}}
\end{equation*}
between $p_\alpha(T)$ and $q_\alpha(T)$ from Definition~\ref{defi_palphaT_negative} and the fractional power $(T^2)^{\frac{\alpha}{2}}$ understood as the sectorial $H^\infty$-functional calculus of the operator $T^2$ and the function $f(s)=s^{\frac{\alpha}{2}}$, see \cite[Definition~7.2.5]{FJBOOK},
where the sign-function is understood as the sign of the scalar part.
\end{thm}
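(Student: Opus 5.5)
The first identity $q_\alpha(T)=(T^2)^{\frac{\alpha}{2}}$ is a composition rule: since $q_\alpha(s)=(s^2)^{\frac{\alpha}{2}}$ for $\Sc(s)\neq 0$, we have $q_\alpha=f\circ g$ with $g(s)=s^2$ and $f(s)=s^{\frac{\alpha}{2}}$. I will prove a sectorial–bisectorial composition rule $f(T^2)=(f\circ g)(T)$ in the same three-step scheme as Proposition~\ref{prop_Composition_rule_not_injective} and Proposition~\ref{prop_Composition_rule_injective}.

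\textbf{Step 1: decaying $f$.} Take $f$ decaying at $0$ and $\infty$ on a sector $S_\theta$ with $\theta>2\omega$, and write
\begin{equation*}
f(T^2)=\frac{1}{2\pi}\int_{\partial S_\varphi\cap\mathbb{C}_J}S_L^{-1}(q,T^2)dq_Jf(q).
\end{equation*}
Substitute $q=s^2$, with $s$ running over the right half $\partial D_{\varphi/2}\cap\{\Sc>0\}\cap\mathbb{C}_J$; then $dq_J=2s\,ds_J$. Using \eqref{Eq_SL_T2}, and noting that $f(s^2)$ commutes with $s$ in $\mathbb{C}_J$ because $f$ is intrinsic, the integrand becomes $\bigl(S_L^{-1}(s,T)-S_L^{-1}(-s,T)\bigr)ds_J f(s^2)$. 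The term with $-s$, after the reflection $s\mapsto -s$, traces the left half of $\partial D_{\varphi/2}$ with the correct orientation. The two halves therefore combine to the $\omega$-functional calculus integral of $f\circ g$ over $\partial D_{\varphi/2}\cap\mathbb{C}_J$. I need to check the orientation and sign carefully here, but no Fubini argument is required; this is the main technical point of the step.

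\textbf{Step 2: extended calculus.} The constants carry over directly, and
\begin{equation*}
(1+T^2)^{-1}=\Big(\frac{1}{1+s^2}\Big)(T)
\end{equation*}
holds with the same symbol on the sectorial side.

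\textbf{Step 3: $H^\infty$ calculus.} For the sectorial function $s^{\frac{\alpha}{2}}$ (which requires injectivity of $T^2$, guaranteed by injectivity of $T$), use the sectorial regularizer $\tilde e(q)=\frac{q^m}{(1+q)^{2m}}$. Its composition is $\tilde e(s^2)=\frac{s^{2m}}{(1+s^2)^{2m}}$, a bisectorial regularizer of $q_\alpha$. By Steps 1–2, $\tilde e(T^2)=(\tilde e\circ g)(T)$ and $(\tilde e f)(T^2)=((\tilde e f)\circ g)(T)$. Inverting gives $(T^2)^{\frac{\alpha}{2}}=q_\alpha(T)$, exactly as in \eqref{Eq_Composition_positive_7}. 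This assumes the sectorial $H^\infty$-calculus of \cite{FJBOOK} is built on the same contour and regularizer scheme, which I take for granted.

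\textbf{Step 4: the identity for $p_\alpha(T)$.} Pointwise, $p_\alpha(s)=\sgn(s)q_\alpha(s)$ for $\Sc(s)\neq0$, where $\sgn=p_0\in\mathcal{N}(D_\theta)$ is bounded by a constant and hence satisfies \eqref{Eq_Core_bound}. Since $T$ is injective on a Hilbert module, it has dense domain and range, so Proposition~\ref{prop_Product_rule_mixed} applies with $g=\sgn$ and $f=q_\alpha$:
\begin{equation*}
p_\alpha(T)=\overline{\sgn(T)q_\alpha(T)}=\overline{\sgn(T)(T^2)^{\frac{\alpha}{2}}}.
\end{equation*}
If $V$ is only a Banach module, the density of domain and range must be assumed or supplied, since the closure is genuinely needed when $\sgn(T)$ is unbounded.

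I expect the main obstacle to be Step 1: the change of variables $q=s^2$ that splits the sectorial contour into the two halves of the bisectorial contour, together with matching the sectorial and bisectorial regularizers in Step 3.
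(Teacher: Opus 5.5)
Your proposal is correct and follows essentially the paper's proof. The paper performs the same contour computation: it splits $\partial D_\varphi\cap\mathbb{C}_J$ into its two halves, applies \eqref{Eq_SL_T2} and substitutes $\xi=s^2$, but it runs this once directly on the regularized function $e\,q_\alpha$ with $e(s)=\frac{s^{2n}}{(1+s^2)^{2n}}$ rather than as a general composition rule. It then inverts the matching regularizers $e(T)=\widetilde{e}(T^2)$ and obtains $p_\alpha(T)$ from the mixed power rule with $p_0=\sgn$, exactly as in your Steps 3–4. Your Step 2 is superfluous, since for $m>\frac{|\alpha|}{2}$ both $\widetilde{e}$ and $\widetilde{e}\,q^{\frac{\alpha}{2}}$ already decay at $0$ and $\infty$, so Step 1 alone suffices.
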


\begin{proof}
Let us first consider the operator $q_\alpha(T)$. Consider the regularizer $e(s)=\frac{s^{2n}}{(1+s^2)^{2n}}$, for some integer $n>\frac{|\alpha|}{2}$. Then the $\omega$-functional calculus \eqref{Eq_Omega} of the product $(eq_\alpha)(T)$ can for any $J\in\mathbb{S}$ and $\varphi\in(\omega,\frac{\pi}{2})$ be written as
\begin{align*}
(eq_\alpha)(T)&=\frac{1}{2\pi}\int_{\partial D_\varphi\cap\mathbb{C}_J}S_L^{-1}(s,T)ds_Je(s)q_\alpha(s) \\
&=\frac{1}{2\pi}\int_{\partial S_\varphi\cap\mathbb{C}_J}S_L^{-1}(s,T)ds_Je(s)q_\alpha(s)-\frac{1}{2\pi}\int_{\partial S_\varphi\cap\mathbb{C}_J}S_L^{-1}(-s,T)ds_Je(-s)q_\alpha(-s) \\
&=\frac{1}{2\pi}\int_{\partial S_\varphi\cap\mathbb{C}_J}\big(S_L^{-1}(s,T)-S_L^{-1}(-s,T)\big)ds_Je(s)q_\alpha(s),
\end{align*}
where in the last line we used that $e(-s)=e(s)$ and $q_\alpha(-s)=q_\alpha(s)$. Using now the connection \eqref{Eq_SL_T2} between the $S$-resolvents of $T$ and $T^2$, we can substitute $\xi=s^2$ to rewrite this integral as
\begin{align*}
(eq_\alpha)(T)&=\frac{1}{\pi}\int_{\partial S_\varphi\cap\mathbb{C}_J}S_L^{-1}(s^2,T^2)sds_J\frac{s^{2n}s^\alpha}{(1+s^2)^{2n}} \\
&=\frac{1}{2\pi}\int_{\partial S_{2\varphi}\cap\mathbb{C}_J}S_L^{-1}(\xi,T^2)d\xi_J\frac{\xi^n\xi^{\frac{\alpha}{2}}}{(1+\xi)^{2n}}=\big(\widetilde{e}\,\xi^{\frac{\alpha}{2}}\big)(T^2),
\end{align*}
where in the last equation we interpreted the integral as the sectorial $\omega$-functional calculus \cite[Definition~7.1.5]{FJBOOK} with the regularizer function $\widetilde{e}(\xi):=\frac{\xi^n}{(1+\xi)^{2n}}$ and the fractional power $\xi^{\frac{\alpha}{2}}$. \medskip

Using now the $H^\infty$-functional calculus \eqref{Eq_Hinfty}, we get the stated result
\begin{equation*}
q_\alpha(T)=e(T)^{-1}(eq_\alpha)(T)=\widetilde{e}(T^2)^{-1}\big(\widetilde{e}\,\xi^{\frac{\alpha}{2}}\big)(T^2)=(\xi^{\frac{\alpha}{2}})(T^2)=(T^2)^{\frac{\alpha}{2}}.
\end{equation*}
The second identity $p_\alpha(T)=\overline{\sgn(T)(T^2)^{\frac{\alpha}{2}}}$ now follows immediately from the power rule Theorem~\ref{thm_Product_rule_mixed_powers}~i) and the obvious fact that $p_0(s)=\sgn(s)$.
\end{proof}

\section{The gradient with nonconstant coefficients}\label{sec_Gradient}

In this section we want to apply the abstract results of the previous Section~\ref{sec_Fractional_powers} to the specific case of the gradient operator with non-constant coefficients in $n\geq 3$ dimensions. Precisely, if $e_1,\dots,e_n$ are the imaginary units of the Clifford algebra $\mathbb{R}_n$, we consider the hypercomplex representation of the gradient
\begin{equation}\label{Eq_Gradient}
\nabla_a:=\sum_{i=1}^ne_ia_i(x)\frac{\partial}{\partial x_i},\qquad\text{with }\dom(\nabla_a):=H^1(\mathbb{R}^n).
\end{equation}
This is an operator in the Clifford module of square integrable functions with values in $\mathbb{R}_n$,
\begin{equation*}
L^2(\mathbb{R}^n)=\Big\{f:\mathbb{R}^n\to\mathbb{R}_n\;\Big|\;\int_{\mathbb{R}^n}|f(x)|^2dx<\infty\Big\}.
\end{equation*}
Our goal is to define the $H^\infty$-functional calculus of $\nabla_a$, and in particular apply the fractional powers of Section~\ref{sec_Fractional_powers}. \medskip

For the coefficients $a_1,\dots,a_n$ of $\nabla_a$ in \eqref{Eq_Gradient}, we require the following assumptions.

\begin{ass}\label{ass_Coefficients}
The coefficients $a_1,\dots,a_n:\mathbb{R}^n\rightarrow\mathbb{R}$ of the gradient \eqref{Eq_Gradient} are continuously differentiable and belong to one of following two cases:

\begin{itemize}
\item[I)]  If the functions $a_i$ depend on all variables $x_1,\dots,x_n$, we assume that
\begin{subequations}
\begin{align}
m_a:=&\min\limits_{i\in\{1,\dots,n\}}\inf\limits_{x\in\mathbb{R}^n}a_i(x)>0, \label{Eq_ma} \\
M_a:=&\Big(\sum\nolimits_{i=1}^n\Vert a_i\Vert_{L^\infty}^2\Big)^{\frac{1}{2}}<\infty, \label{Eq_Ma} \\
M_a':=&\Big(\sum\nolimits_{i,j=1}^n\Big\Vert a_j\frac{\partial a_i}{\partial x_j}\Big\Vert^2_{L^n}\Big)^{\frac{1}{2}}<\infty, \label{Eq_Maprime} \\
M_a'':=&\Big(\sum\nolimits_{i,j=1}^n\Big\Vert\frac{\partial a_i}{\partial x_j}\Big\Vert_{L^\infty}^2\Big)^{\frac{1}{2}}<\infty, \label{Eq_Maprimeprime}
\end{align}
\end{subequations}
with bounds which satisfy the condition
\begin{equation}\label{Eq_Coefficient_inequality}
m_a^2>C_SM_a',
\end{equation}
with $$C_S=\frac{1}{\sqrt{\pi n(n-2)}}\Big(\frac{\Gamma(n)}{\Gamma(\frac{n}{2})}\Big)^{\frac{1}{n}}$$ the constant from Sobolev embedding theorem, see \cite{T76}.

\item[II)] If $a_i(x)=a_i(x_i)$ only depends on the variable $x_i$, for every $i\in\{1,\dots,n\}$, we only assume the bounds \eqref{Eq_ma}, \eqref{Eq_Ma}, and \eqref{Eq_Maprimeprime}.
\end{itemize}
\end{ass}

\begin{rem}
In case I) of Assumption~\ref{ass_Coefficients}, the conditions \eqref{Eq_ma}, \eqref{Eq_Ma} and \eqref{Eq_Maprime} are needed for the existence of a weak solution of the spectral problem, while the condition \eqref{Eq_Maprimeprime} leads to $H^2$-regularity of this solution. The inequality \eqref{Eq_Coefficient_inequality} additionally ensures that the $S$-spectrum is contained in a double sector. Case II) on the other hand will be reduced to a gradient with constant coefficients using a transformation of variables.
\end{rem}

In order to derive the $S$-spectrum of the operator $\nabla_a$, we need to investigate the bounded invertibility of the operator $Q_s[\nabla_a]$ in \eqref{Eq_Qs}. As calculated in \cite[Eq.(1.7)]{Gradient}, the corresponding form to this operator is given by
\begin{equation}\label{Eq_qs}
q_s(u,v):=\sum_{i=1}^n\Big\langle\frac{\partial u}{\partial x_i},a_i^2\frac{\partial v}{\partial x_i}+(2s_0a_i-B_i)e_iv\Big\rangle_{L^2}+|s|^2\langle u,v\rangle_{L^2},
\end{equation}
with $\dom(q_s):=H^1(\mathbb{R}^n)$, and where we have set
\begin{equation}\label{Eq_Bi}
B_i:=\sum\limits_{j=1}^ne_ja_j\frac{\partial a_i}{\partial x_j},\qquad i\in\{1,\dots,n\}.
\end{equation}
Under the Assumption~\ref{ass_Coefficients} on the coefficients, we now prove the unique solvability of the weak formulation.

\begin{thm}\label{thm_Weak_solution}
Let the coefficients $a_1,\dots,a_n$ satisfy the Assumption~\ref{ass_Coefficients} case I) or case II), and let us set
\begin{equation}\label{Eq_Ka}
K_a:=\begin{cases} \frac{M_a}{\sqrt{m_a^2-C_SM_a'}}, & \text{in case I)}, \\ 1, & \text{in  case II)}. \end{cases}
\end{equation}
Then, for every $s\in\mathbb{R}^{n+1}$ with
\begin{equation*}
|s|>K_a|s_0|,
\end{equation*}
and every $f\in L^2(\mathbb{R}^n)$, there exists a unique $u_f\in H^1(\mathbb{R}^n)$, such that
\begin{equation*}
q_s(u_f,v)=\langle f,v\rangle_{L^2},\qquad v\in H^1(\mathbb{R}^n).
\end{equation*}
Moreover, with the notion $$\Vert u\Vert_D^2:=\sum_{j=1}^n\Vert\frac{\partial u}{\partial x_j}\Vert_{L^2}^2,$$ this solution satisfies the bounds
\begin{equation}\label{Eq_Estimate_weak_solution}
\Vert u_f\Vert_{L^2}\leq\Big(\frac{M_a}{m_a\sqrt{n}}\Big)^{\frac{n}{2}}\frac{\Vert f\Vert_{L^2}}{|s|^2-K_a^2s_0^2},\quad\text{and}\quad\Vert u_f\Vert_D\leq\Big(\frac{M_a}{m_a\sqrt{n}}\Big)^{\frac{n}{2}}\frac{K_a|s|\Vert f\Vert_{L^2}}{m_a(|s|^2-K_a^2s_0^2)}.
\end{equation}
\end{thm}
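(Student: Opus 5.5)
The plan is to apply the Lax--Milgram lemma, in its real Hilbert space version, to the form $q_s$ on $H^1(\mathbb{R}^n)$. The form is $\mathbb{R}_n$-valued, so coercivity will be tested on the scalar part $\Sc q_s(u,u)$. First I will check boundedness. Using \eqref{Eq_Ma} and \eqref{Eq_Maprimeprime}, the terms $\langle\partial_iu,a_i^2\partial_iv\rangle$ and $2s_0\langle\partial_iu,a_ie_iv\rangle$ are bounded by $\Vert u\Vert_{H^1}\Vert v\Vert_{H^1}$. In case I the term with $B_i$ is handled by Hölder with exponents $(2,n,\frac{2n}{n-2})$ and the Sobolev inequality $\Vert v\Vert_{L^{2n/(n-2)}}\le C_S\Vert v\Vert_D$, which is where \eqref{Eq_Maprime} and $n\ge3$ enter. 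Hence $q_s$ is a bounded form on $H^1$.

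Second, I will establish coercivity. For real-valued test functions (or in general, taking scalar parts) I would estimate
\begin{equation*}
\Sc q_s(u,u)\ge m_a^2\Vert u\Vert_D^2-2|s_0|M_a\Vert u\Vert_D\Vert u\Vert_{L^2}-C_SM_a'\Vert u\Vert_D^2+|s|^2\Vert u\Vert_{L^2}^2 .
\end{equation*}
The $B_i$-term is controlled by Cauchy--Schwarz over the index pair $(i,j)$, together with Hölder and the Sobolev inequality. Absorbing the cross term with Young's inequality, $2|s_0|M_a xy\le(m_a^2-C_SM_a')x^2+\frac{M_a^2s_0^2}{m_a^2-C_SM_a'}y^2$, gives a coercivity-type lower bound $\Sc q_s(u,u)\ge(|s|^2-K_a^2s_0^2)\Vert u\Vert_{L^2}^2$. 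This bound is positive exactly when $|s|>K_a|s_0|$, and the condition \eqref{Eq_Coefficient_inequality} makes the constant $K_a$ finite. To get full $H^1$-coercivity I will use a convex combination of the two forms of the estimate, keeping a fraction of the $\Vert u\Vert_D^2$ term. Lax--Milgram then yields existence and uniqueness of $u_f$.

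In case II the $B_i$ are not small in $L^n$. Here I would instead use the change of variables $y_i=\int_0^{x_i}a_i(t)^{-1}dt$, under which $a_i\partial_{x_i}=\partial_{y_i}$. This turns $\nabla_a$ into the constant-coefficient gradient on $L^2$ with the weight $w(y)=\prod_i a_i$. That weight is bounded above and below by \eqref{Eq_ma} and \eqref{Eq_Ma}, which explains the constant $(M_a/(m_a\sqrt n))^{n/2}$ coming from the comparison of norms under the Jacobian. For the constant-coefficient gradient the form is coercive for $|s|>|s_0|$, so $K_a=1$.

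Third, the a priori bounds \eqref{Eq_Estimate_weak_solution} come from testing with $v=u_f$ and taking scalar parts. Using $|\langle f,u_f\rangle|\le\Vert f\Vert\Vert u_f\Vert$, the $L^2$ bound follows from the lower bound above. The $D$-norm bound follows by keeping the gradient term in the Young step and optimizing, which gives a factor $K_a|s|/m_a$.

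I expect the main obstacle to be the noncommutative bookkeeping: verifying that the scalar part of the Clifford-valued terms $\langle\partial_iu,e_iu\rangle$ and $\langle\partial_iu,B_iu\rangle$ behaves as claimed, and handling a genuinely $\mathbb{R}_n$-valued $u$ by working componentwise. The other delicate step is tracking the weight constant in the transformation of case II.
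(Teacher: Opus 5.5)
\paragraph{Case I.} Your argument here is sound. It applies Lax--Milgram to the real form $\Sc q_s$ on $H^1(\mathbb{R}^n)$ and bounds the $B_i$-term by H\"older--Sobolev as $C_SM_a'\Vert u\Vert_D^2$. It then uses Young's inequality with the constant $m_a^2-C_SM_a'$, with an $\varepsilon$-loss to keep $H^1$-coercivity. Finally, $\mathbb{R}_n$-linearity of $q_s$ in $v$ upgrades scalar parts to the full identity. The paper does not reprove this case; it cites \cite[Theorem~3.2]{Gradient}. Your bounds imply the stated ones because $(M_a/(m_a\sqrt n))^{n/2}\geq 1$.

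\paragraph{The gap: Case II.} After the substitution $y_i=\int_0^{x_i}a_i^{-1}$, the weak problem still lives in $L^2(dx)=L^2(w\,dy)$. In the new variables, $q_s(u,u)=\widetilde q_s(\widetilde u,w\widetilde u)$, not $\widetilde q_s(\widetilde u,\widetilde u)$, where $\widetilde q_s$ is the unweighted constant-coefficient form. So coercivity of $\widetilde q_s$ says nothing about $q_s$, and in fact $q_s$ is in general not coercive for all $|s|>|s_0|$. In Case II one has $-B_ie_i=a_ia_i'$, so for $s_0=0$ and real-valued $u$,
\[
\Sc q_s(u,u)=\sum_{i=1}^n\Vert a_i\partial_iu\Vert_{L^2}^2+\frac12\sum_{i=1}^n\int_{\mathbb{R}^n}a_ia_i'\,\partial_i(u^2)\,dx+|s|^2\Vert u\Vert_{L^2}^2 .
\]
Take $a_2,\dots,a_n$ constant, $a_1$ nonconstant, and $u=a_1(x_1)^{-1/2}\chi$. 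A direct computation gives
\[
\Sc q_s(u,u)=\int_{\mathbb{R}^n}\Big(a_1(\partial_1\chi)^2+\sum_{i=2}^n\frac{a_i^2}{a_1}(\partial_i\chi)^2-\frac{(a_1')^2}{4a_1}\chi^2+\frac{|s|^2}{a_1}\chi^2\Big)dx .
\]
This is negative when $\chi\equiv 1$ on a large box and $|s|$ is small. Hence neither Lax--Milgram applied to $q_s$ nor your third step (testing with $v=u_f$) can give the result with $K_a=1$.

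\paragraph{The missing idea.} This is the heart of the paper's Case II proof: transform the test function differently from the solution. Set $\widetilde u(y)=u(x)$ and $\widehat v(y)=a_1(x_1)\cdots a_n(x_n)v(x)$. Then $q_s(u,v)=\widetilde q_s(\widetilde u,\widehat v)$ and $\langle f,v\rangle_{L^2,dx}=\langle\widetilde f,\widehat v\rangle_{L^2,dy}$. The factor $w$ in $\widehat v$ absorbs both the Jacobian $dx=w\,dy$ and the weight produced by the $L^2(dx)$-adjoint of $a_i\partial_{x_i}$.

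Since $w,1/w\in W^{1,\infty}$ by \eqref{Eq_ma}, \eqref{Eq_Ma}, \eqref{Eq_Maprimeprime}, the map $v\mapsto\widehat v$ is a bijection of $H^1(\mathbb{R}^n)$. So the $q_s$-problem is equivalent to the unweighted problem for $\widetilde q_s$, where Lax--Milgram applies. The a priori bounds are then derived for $\widetilde u$ in $L^2(dy)$ and only afterwards transported back. That is where $(M_a/(m_a\sqrt n))^{n/2}$ and the extra $1/m_a$ in the $D$-bound come from.

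\paragraph{Justifying $K=1$.} This does not follow from your Case I estimate with $a_i\equiv 1$: that gives $K=\sqrt n$, since then $M_a=\sqrt n$. You need $\Vert\nabla\widetilde u\Vert_{L^2}=\Vert\widetilde u\Vert_D$, i.e.\ that the mixed terms cancel because $e_ie_j+e_je_i=0$ (by integration by parts or Fourier transform). This yields
\[
\Sc\widetilde q_s(\widetilde u,\widetilde u)\geq\Vert\widetilde u\Vert_D^2-2|s_0|\Vert\widetilde u\Vert_D\Vert\widetilde u\Vert_{L^2}+|s|^2\Vert\widetilde u\Vert_{L^2}^2 ,
\]
and hence coercivity for $|s|>|s_0|$.
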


\begin{proof}
In the case where the coefficients $a_i$ satisfy the Assumption~\ref{ass_Coefficients} case~I), this result is already proven in \cite[Theorem~3.2]{Gradient}. Note that there is always $M_a\geq\sqrt{n}\,m_a$ for the constants in \eqref{Eq_ma} and \eqref{Eq_Ma}. In the case the coefficients satisfy the Assumption~\ref{ass_Coefficients} case~II), let us define for every $i\in\{1,\dots,n\}$ the function
\begin{equation*}
\xi_i(x_i):=\int_0^{x_i}\frac{1}{a_i(u)}du,\qquad x_i\in\mathbb{R}.
\end{equation*}
Since $\frac{1}{a_i(u)}\leq\frac{1}{m_a}$ is bounded from above by \eqref{Eq_ma}, the integral exists. Since $\frac{1}{a_i(u)}\geq\frac{1}{M_a}$ is also bounded from below by \eqref{Eq_Ma}, the function $\xi_i$ is strictly monotone increasing, with $\lim_{x_i\rightarrow\pm\infty}\xi_i(x_i)=\pm\infty$. Hence $\xi_i:\mathbb{R}\rightarrow\mathbb{R}$ is bijective, and differentiable with derivative
\begin{equation*}
\xi_i'(x_i)=\frac{1}{a_i(x_i)}.
\end{equation*}
We now use the change of variables
\begin{equation}\label{Eq_Transformation_of_variables}
y_i=\xi_i(x_i),\qquad i\in\{1,\dots,n\},
\end{equation}
to reduce the gradient $\nabla_a$ with nonconstant coefficients, to the gradient with all coefficients equal to one
\begin{equation*}
\nabla_a=\sum_{i=1}^ne_ia_i(x_i)\frac{\partial}{\partial x_i}=\sum_{i=1}^ne_ia_i(x_i)\xi_i'(x_i)\frac{\partial}{\partial y_i}=\sum_{i=1}^ne_i\frac{\partial}{\partial y_i}=:\nabla.
\end{equation*}
The form $\widetilde{q}_s$, which corresponds to the operator $Q_s[\nabla]$, is given by \eqref{Eq_qs} when all coefficients $a_i$ are set to $1$, i.e.
\begin{equation*}
\widetilde{q}_s(u,v):=\sum_{i=1}^n\Big\langle\frac{\partial u}{\partial y_i},\frac{\partial v}{\partial y_i}\Big\rangle_{L^2}+|s|^2\langle u,v\rangle_{L^2},\qquad u,v\in H^1(\mathbb{R}^n).
\end{equation*}
For functions $u,v\in H^1(\mathbb{R}^n)$ let us now consider the functions
\begin{equation}\label{Eq_utilde}
\widetilde{u}(y):=u(x),\qquad\text{and}\qquad\widehat{v}(y):=a_1(x_1)\dots a_n(x_n)v(x),
\end{equation}
with the connection \eqref{Eq_Transformation_of_variables} between the coordinates $y$ and $x$. Then there is
\begin{equation}\label{Eq_u_utilde_connection}
u\in H^1(\mathbb{R}^n)\Leftrightarrow\widetilde{u}\in H^1(\mathbb{R}^n)\qquad\text{as well as}\qquad v\in H^1(\mathbb{R}^n)\Leftrightarrow\widetilde{v}\in H^1(\mathbb{R}^n).
\end{equation}
Moreover, the partial derivatives of these functions are given by
\begin{align*}
\frac{\partial}{\partial y_i}\widetilde{u}(y)&=a_i(x_i)\frac{\partial}{\partial x_i}u(x), \\
\frac{\partial}{\partial y_i}\widehat{v}(y)&=a_i(x_i)\frac{\partial}{\partial x_i}\big(a_1(x_1)\dots a_n(x_n)v(x)\big)=a_1(x_1)\dots a_n(x_n)\frac{\partial}{\partial x_i}\big(a_i(x_i)v(x)\big).
\end{align*}
With this notion of $\widetilde{u}$ and $\widehat{v}$ there exists the following connection between the forms $q_s$ and $\widetilde{q}_s$
\begin{align*}
q_s(u,v)&=\sum_{i=1}^n\Big\langle\frac{\partial u}{\partial x_i},a_i\Big(\frac{\partial}{\partial x_i}(va_i)+2s_0e_iv\Big)\Big\rangle_{L^2,dx}+|s|^2\langle u,v\rangle_{L^2,dx} \\
&=\sum_{i=1}^n\Big\langle\frac{1}{a_i}\frac{\partial\widetilde{u}}{\partial y_i},\frac{a_i}{a_1\dots a_n}\Big(\frac{\partial\widehat{v}}{\partial y_i}+2s_0e_i\widehat{v}\Big)\Big\rangle_{L^2,dx}+|s|^2\Big\langle\widetilde{u},\frac{\widehat{v}}{a_1\dots a_n}\Big\rangle_{L^2,dx} \\
&=\sum_{i=1}^n\Big\langle\frac{\partial\widetilde{u}}{\partial y_i},\frac{\partial\widehat{v}}{\partial y_i}+2s_0e_i\widehat{v}\Big\rangle_{L^2,dy}+|s|^2\langle\widetilde{u},\widehat{v}\rangle_{L^2,dy}=\widetilde{q}_s(\widetilde{u},\widehat{v}).
\end{align*}

Note that in the index of the $L^2$-scalar products we indicate by $dx$ or $dy$ with respect to what variable the $L^2$-integral is understood. This connection between the forms means that for $f\in L^2(\mathbb{R}^n)$, the problem
\begin{equation}\label{Eq_qs_qstilde_equivalence_1}
q_s(u,v)=\langle f,v\rangle_{L^2,dx},\qquad v\in H^1(\mathbb{R}^n),
\end{equation}
admits a unique weak solution $u_f\in H^1(\mathbb{R}^n)$ if and only if the problem
\begin{equation}\label{Eq_qs_qstilde_equivalence_2}
\widetilde{q}_s(\widetilde{u},\widehat{v})=\langle\widetilde{f},\widehat{v}\rangle_{L^2,dy},\qquad\widehat{v}\in H^1(\mathbb{R}^n),
\end{equation}
admits a weak solution $\widetilde{u}_{\widetilde{f}}\in H^1(\mathbb{R}^n)$ with respect to the function
\begin{equation}\label{Eq_ftilde}
\widetilde{f}(y):=f(x).
\end{equation}
However, since the constant coefficients of the form $\widetilde{q}_s$ are covered by Assumption~\ref{ass_Coefficients} case~I), there exists for every $\widetilde{f}\in L^2(\mathbb{R}^n)$ a unique solution $\widetilde{u}_{\widetilde{f}}$, which satisfies the estimate
\begin{equation*}
\Vert\widetilde{u}_{\widetilde{f}}\Vert_{L^2,dy}\leq\frac{\Vert\widetilde{f}\Vert_{L^2,dy}}{|\Im(s)|^2},\qquad\text{and}\qquad\Vert\widetilde{u}_{\widetilde{f}}\Vert_{D,dy}\leq\frac{|s|\Vert\widetilde{f}\Vert_{L^2,dy}}{|\Im(s)|^2}.
\end{equation*}
Plugging in the connection $\widetilde{u}_{\widetilde{f}}(y)=u_f(x)$ from \eqref{Eq_utilde}, the respective connection \eqref{Eq_ftilde} between $f$ and $\widetilde{f}$ as well as using the transformation $dx=a_1\dots a_ndy$ between the $dy$- and the $dx$-norms, gives
\begin{align*}
\Vert u_f\Vert_{L^2,dx}&=\big\Vert\sqrt{a_1\dots a_n}\,\widetilde{u}_{\widetilde{f}}\big\Vert_{L^2,dy}\leq\Big(\frac{M_a}{\sqrt{n}}\Big)^{\frac{n}{2}}\Vert\widetilde{u}_{\widetilde{f}}\Vert_{L^2,dy}\leq\Big(\frac{M_a}{\sqrt{n}}\Big)^{\frac{n}{2}}\frac{\Vert\widetilde{f}\Vert_{L^2,dy}}{|\Im(s)|^2} \\
&=\Big(\frac{M_a}{\sqrt{n}}\Big)^{\frac{n}{2}}\frac{1}{|\Im(s)|^2}\Big\Vert\frac{f}{\sqrt{a_1\dots a_n}}\Big\Vert_{L^2,dx}\leq\Big(\frac{M_a}{m_a\sqrt{n}}\Big)^{\frac{n}{2}}\frac{\Vert f\Vert_{L^2,dx}}{|\Im(s)|^2},
\end{align*}
where in the first line we used the inequality of arithmetic and geometric means, to estimate the term $\sqrt{a_1\dots a_n}$. Similarly, for the derivatives, the estimate on $\widetilde{u}_{\widetilde{f}}$ transforms into the estimate for $u_f$ as
\begin{equation*}
\Vert u_f\Vert_{D,dx}\leq\Big(\frac{M_a}{m_a\sqrt{n}}\Big)^{\frac{n}{2}}\frac{|s|\Vert f\Vert_{L^2,dx}}{m_a|\Im(s)|^2}. \qedhere
\end{equation*}
\end{proof}

Next we prove that the unique weak solution of the form $q_s$ from Theorem~\ref{thm_Weak_solution} is actually a strong solution of the operator $Q_s[\nabla_a]$. The proof of this statement is highly motivated by \cite[Theorem~8.8]{GT01}.

\begin{thm}\label{thm_Strong_solution}
Let the coefficients $a_1,\dots,a_n$ satisfy Assumption~\ref{ass_Coefficients}. If for some $s\in\mathbb{R}^{n+1}$ and $f\in L^2(\mathbb{R}^n)$, the function $u\in H^1(\mathbb{R}^n)$ is a solution of the weak problem
\begin{equation}\label{Eq_Strong_solution}
q_s(u,v)=\langle f,v\rangle_{L^2},\qquad v\in H^1(\mathbb{R}^n),
\end{equation}
with the form $q_s$ in \eqref{Eq_qs}, then $u\in H^2(\mathbb{R}^n)$ is a solution of
\begin{equation*}
Q_s[\nabla_a]u=f.
\end{equation*}
\end{thm}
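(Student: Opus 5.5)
The approach is the difference-quotient method of \cite[Theorem~8.8]{GT01}, adapted to the Clifford-valued form $q_s$ in \eqref{Eq_qs}. First I rewrite the form so that its principal part is visibly elliptic. Using \eqref{Eq_Bi}, the form equals $\sum_i\langle\partial_iu,a_i^2\partial_iv\rangle$ plus first-order terms $\sum_i\langle\partial_iu,(2s_0a_i-B_i)e_iv\rangle$ plus the zero-order term $|s|^2\langle u,v\rangle$. The principal part is the diagonal, uniformly elliptic operator $-\sum_i\partial_i(a_i^2\partial_i\,\cdot)$, since $a_i^2\geq m_a^2$ by \eqref{Eq_ma}. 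Its coefficients $a_i^2$ are Lipschitz, because $a_i\in L^\infty$ by \eqref{Eq_Ma} and $\partial_ja_i\in L^\infty$ by \eqref{Eq_Maprimeprime}.

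Next I transfer everything except the principal part to the right-hand side. The first-order coefficients $2s_0a_i-B_i$ are bounded, since $B_i$ is a sum of products of $a_j$ and $\partial_ja_i$, which are in $L^\infty$ by \eqref{Eq_Ma} and \eqref{Eq_Maprimeprime}. Because $u\in H^1$, the weak problem \eqref{Eq_Strong_solution} becomes
\begin{equation*}
\sum_{i=1}^n\langle\partial_iu,a_i^2\partial_iv\rangle_{L^2}=\langle F,v\rangle_{L^2},\qquad v\in H^1(\mathbb{R}^n),
\end{equation*}
with $F\in L^2(\mathbb{R}^n)$. To get $F$, I move the term $\langle\partial_iu,(2s_0a_i-B_i)e_iv\rangle$ onto $v$ by using the adjoint of multiplication by the bounded function $(2s_0a_i-B_i)e_i$, which is again a bounded Clifford-valued multiplication. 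Working componentwise in the real basis of $\mathbb{R}_n$, this is a system of uncoupled scalar equations with the same principal part. Clifford multiplication only mixes the lower-order terms, and those are already in $F$.

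For the regularity step I test with $v=-\Delta_k^{-h}\Delta_k^hu$, where $\Delta_k^h$ is the difference quotient in direction $x_k$. The discrete product rule together with the Lipschitz bound on $a_i^2$ gives
\begin{equation*}
m_a^2\Vert\nabla\Delta_k^hu\Vert_{L^2}^2\leq C\big(\Vert u\Vert_{H^1}+\Vert F\Vert_{L^2}\big)\Vert\nabla\Delta_k^hu\Vert_{L^2},
\end{equation*}
uniformly in $h$. It follows that $\partial_ku\in H^1$, so $u\in H^2(\mathbb{R}^n)$. Since the domain is all of $\mathbb{R}^n$, no cutoff is needed, and the bounds are global because the coefficient bounds are global.

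Finally, with $u\in H^2$ I integrate by parts in \eqref{Eq_Strong_solution}. The form $q_s$ was derived in \cite{Gradient} precisely as $\langle Q_s[\nabla_a]u,v\rangle$ for smooth $u$; by density and continuity this identity extends to $u\in H^2$. Hence $\langle Q_s[\nabla_a]u-f,v\rangle=0$ for all $v\in H^1$, which gives $Q_s[\nabla_a]u=f$. For this to make sense, $u$ must lie in $\dom(\nabla_a^2)$, i.e. $\nabla_au\in H^1$. This holds because $a_i\partial_iu\in H^1$ whenever $u\in H^2$, given that $a_i$ and $\nabla a_i$ are bounded.

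I expect the main obstacle to be the first two steps. The form is not written as a standard divergence-form operator with real symmetric coefficients: $B_i$ and the units $e_i$ act noncommutatively on the right, so one has to check carefully that the lower-order terms really produce an $L^2$ right-hand side and that the difference-quotient estimate survives the Clifford-valued inner product. In case II), \eqref{Eq_Maprime} is not assumed, but only boundedness of $B_i$ is used, and that follows from \eqref{Eq_Maprimeprime}. So the same argument covers both cases.
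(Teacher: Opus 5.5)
Your proposal is correct and follows essentially the same route as the paper. The paper also moves the lower-order terms into the $L^2$ function $g=f-|s|^2u+\sum_{i=1}^ne_i(2s_0a_i+B_i)\frac{\partial u}{\partial x_i}$, which is exactly your $F$ once $\overline{B_i}=-B_i$ is used. It then tests with difference quotients and combines $\Vert\Delta_ha_i^2\Vert_{L^\infty}\leq 2M_aM_a''$ with $a_i^2\geq m_a^2$ to bound $\Vert\Delta_hu\Vert_D$ uniformly in $h$. From this it gets $u\in H^2(\mathbb{R}^n)$ by the Gilbarg--Trudinger difference-quotient lemma, and finally undoes the integration by parts to obtain $Q_s[\nabla_a]u=f$.

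The only differences are cosmetic. The paper uses difference quotients in arbitrary directions $h\in\mathbb{R}^n$, testing with $\Delta_{-h}v$ and then setting $v=\Delta_hu$, where you use coordinate quotients $-\Delta_k^{-h}\Delta_k^hu$. Your explicit checks that only \eqref{Eq_ma}, \eqref{Eq_Ma} and \eqref{Eq_Maprimeprime} are needed, and that $H^2(\mathbb{R}^n)\subseteq\dom(\nabla_a^2)$, are points the paper leaves implicit.
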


\begin{proof}
In order to show that $u\in H^2(\mathbb{R}^n)$, we have to show that $\frac{\partial u}{\partial x_i}\in H^1(\mathbb{R}^n)$, for every $i\in\{1,\dots,n\}$. However, by \cite[Lemma~7.24]{GT01} it is sufficient to verify
\begin{equation}\label{Eq_H2_regularity_4}
\sup\limits_{h\in\mathbb{R}^n\setminus\{0\}}\Big\Vert\Delta_h\frac{\partial u}{\partial x_i}\Big\Vert_{L^2}<\infty,\qquad i\in\{1,\dots,n\},
\end{equation}
where for every $h\in\mathbb{R}^n\setminus\{0\}$, the difference quotient operator $\Delta_h:L^2(\mathbb{R}^n)\rightarrow L^2(\mathbb{R}^n)$ is defined as
\begin{equation*}
\Delta_hv(x):=\frac{v(x+h)-v(x)}{|h|},\qquad v\in L^2(\mathbb{R}^n),\ \ x\in\mathbb{R}^n.
\end{equation*}
Since $u$ is a solution of \eqref{Eq_Strong_solution}, we can in particular test \eqref{Eq_Strong_solution} with $\Delta_{-h}v$, for some arbitrary $v\in H^1(\mathbb{R}^n)$. Plugging in the explicit form \eqref{Eq_qs} of $q_s$, then gives
\begin{align*}
\langle f,\Delta_{-h}v\rangle_{L^2}&=\sum\limits_{i=1}^n\Big\langle\frac{\partial u}{\partial x_i},a_i^2\frac{\partial\Delta_{-h}v}{\partial x_i}+(2s_0a_i-B_i)e_i\Delta_{-h}v\Big\rangle_{L^2}+|s|^2\langle u,\Delta_{-h}v\rangle_{L^2} \\
&=\sum\limits_{i=1}^n\Big\langle a_i^2\frac{\partial u}{\partial x_i},\frac{\partial\Delta_{-h}v}{\partial x_i}\Big\rangle_{L^2}+\Big\langle|s|^2u-\sum\limits_{i=1}^ne_i(2s_0a_i+B_i)\frac{\partial u}{\partial x_i},\Delta_{-h}v\Big\rangle_{L^2},
\end{align*}
where in the second line we used $\overline{B_i}=-B_i$, see \eqref{Eq_Bi}. This equation can be rearranged to
\begin{equation}\label{Eq_H2_regularity_5}
\Big\langle\underbrace{f-|s|^2u+\sum\limits_{i=1}^ne_i(2s_0a_i+B_i)\frac{\partial u}{\partial x_i}}_{=:g},\Delta_{-h}v\Big\rangle_{L^2}=\sum\limits_{i=1}^n\Big\langle a_i^2\frac{\partial u}{\partial x_i},\frac{\partial\Delta_{-h}v}{\partial x_i}\Big\rangle_{L^2}.
\end{equation}
It is straight forward to verify that the adjoint difference quotient is given by $\Delta_{-h}^*=-\Delta_h$. Using also the product rule
\begin{equation*}
\Delta_h(uv)(x)=u(x+h)\Delta_hv(x)+\Delta_hu(x)v(x),
\end{equation*}
allows us to rewrite \eqref{Eq_H2_regularity_5} as
\begin{equation*}
\langle g,\Delta_{-h}v\rangle_{L^2}=-\sum\limits_{i=1}^n\Big\langle\Delta_h\Big(a_i^2\frac{\partial u}{\partial x_i}\Big),\frac{\partial v}{\partial x_i}\Big\rangle_{L^2}=-\sum\limits_{i=1}^n\Big\langle a_i(\,\cdot\,+h)^2\frac{\partial\Delta_hu}{\partial x_i}+\Delta_ha_i^2\frac{\partial u}{\partial x_i},\frac{\partial v}{\partial x_i}\Big\rangle_{L^2}.
\end{equation*}
Rewriting this equation once more, and using the inequalities
\begin{align*}
\Vert\Delta_{-h}v\Vert_{L^2}&\leq\frac{1}{|h|}\Big\Vert\sum\limits_{j=1}^nh_j\frac{\partial v}{\partial x_j}\Big\Vert_{L^2}\leq\Vert v\Vert_D,\qquad\text{and} \\
\Vert\Delta_ha_i^2\Vert_{L^\infty}&\leq\frac{1}{|h|}\Big\Vert\sum\limits_{j=1}^nh_j\frac{\partial a_i^2}{\partial x_j}\Big\Vert_{L^\infty}\leq\frac{2}{|h|}\Big\Vert\sum\limits_{j=1}^nh_ja_i\frac{\partial a_i}{\partial x_j}\Big\Vert_{L^\infty}\leq 2M_aM_a'',
\end{align*}
from \cite[Lemma~7.23]{GT01}, we can now estimate
\begin{align}
\Big|\sum\limits_{i=1}^n\Big\langle a_i(\,\cdot\,+h)^2\frac{\partial\Delta_hu}{\partial x_i},\frac{\partial v}{\partial x_i}\Big\rangle_{L^2}\Big|&\leq\sum\limits_{i=1}^n\Vert\Delta_ha_i^2\Vert_{L^\infty}\Big\Vert\frac{\partial u}{\partial x_i}\Big\Vert_{L^2}\Big\Vert\frac{\partial v}{\partial x_i}\Big\Vert_{L^2}+\Vert g\Vert_{L^2}\Vert\Delta_{-h}v\Vert_{L^2} \notag \\
&\leq 2M_aM_a''\sum\limits_{i=1}^n\Big\Vert\frac{\partial u}{\partial x_i}\Big\Vert_{L^2}\Big\Vert\frac{\partial v}{\partial x_i}\Big\Vert_{L^2}+\Vert g\Vert_{L^2}\Vert\Delta_{-h}v\Vert_{L^2} \notag \\
&\leq 2M_aM_a''\Vert u\Vert_D\Vert v\Vert_D+\Vert g\Vert_{L^2}\Vert v\Vert_D.\label{Eq_H2_regularity_1}
\end{align}
Next, we will further estimate the $L^2$-norm of the function $g$ in \eqref{Eq_H2_regularity_5}, namely
\begin{align}
\Vert g\Vert_{L^2}&\leq\Vert f\Vert_{L^2}+|s|^2\Vert u\Vert_{L^2}+\sum\limits_{i=1}^n\Big(2|s_0|\Vert a_i\Vert_{L^\infty}+\Vert B_i\Vert_{L^\infty}\Big)\Big\Vert\frac{\partial u}{\partial x_i}\Big\Vert_{L^2} \notag \\
&\leq\Vert f\Vert_{L^2}+|s|^2\Vert u\Vert_{L^2}+\bigg(2|s_0|\Big(\sum\limits_{i=1}^n\Vert a_i\Vert_{L^\infty}^2\Big)^{\frac{1}{2}}+\Big(\sum\limits_{i=1}^n\Vert B_i\Vert_{L^\infty}^2\Big)^{\frac{1}{2}}\bigg)\Big(\sum\limits_{i=1}^n\Big\Vert\frac{\partial u}{\partial x_i}\Big\Vert_{L^2}^2\Big)^{\frac{1}{2}} \notag \\
&\leq\Vert f\Vert_{L^2}+|s|^2\Vert u\Vert_{L^2}+(2|s_0|M_a+M_aM_a'')\Vert u\Vert_D, \label{Eq_H2_regularity_3}
\end{align}
where in the last line we used that the bounds \eqref{Eq_Ma} and \eqref{Eq_Maprimeprime}, to estimate the $B_i$-term, given in \eqref{Eq_Bi}, by
\begin{equation*}
\Vert B_i\Vert_{L^\infty}\leq\sum_{j=1}^n\Vert a_j\Vert_{L^\infty}\Big\Vert\frac{\partial a_i}{\partial x_j}\Big\Vert_{L^\infty}\leq\Big(\sum_{j=1}^n\Vert a_j\Vert_{L^\infty}^2\Big)^{\frac{1}{2}}\Big(\sum\limits_{j=1}^n\Big\Vert\frac{\partial a_i}{\partial x_j}\Big\Vert_{L^\infty}^2\Big)^{\frac{1}{2}}=M_a\Big(\sum\limits_{j=1}^n\Big\Vert\frac{\partial a_i}{\partial x_j}\Big\Vert_{L^\infty}^2\Big)^{\frac{1}{2}}.
\end{equation*}
Plugging now \eqref{Eq_H2_regularity_3} back into \eqref{Eq_H2_regularity_1}, gives
\begin{equation*}
\Big|\sum\limits_{i=1}^n\Big\langle a_i(\,\cdot\,+h)^2\frac{\partial\Delta_hu}{\partial x_i},\frac{\partial v}{\partial x_i}\Big\rangle_{L^2}\Big|\leq\Big(\Vert f\Vert_{L^2}+|s|^2\Vert u\Vert_{L^2}+(3M_a''+2|s_0|)M_a\Vert u\Vert_D\Big)\Vert v\Vert_D.
\end{equation*}
With the special choice $v=\Delta_hu$, this inequality then becomes
\begin{equation*}
\sum\limits_{i=1}^n\Big\langle a_i(\,\cdot\,+h)^2\frac{\partial\Delta_hu}{\partial x_i},\frac{\partial\Delta_hu}{\partial x_i}\Big\rangle_{L^2}\leq\Big(\Vert f\Vert_{L^2}+|s|^2\Vert u\Vert_{L^2}+(3M_a''+2|s_0|)M_a\Vert u\Vert_D\Big)\Vert\Delta_hu\Vert_D.
\end{equation*}
Note that we were allowed to remove the absolute value on the left hand side, since every inner product is nonnegative. Using now also the lower bound \eqref{Eq_ma} of the coefficients $a_i$, then gives
\begin{equation*}
m_a^2\Vert\Delta_hu\Vert_D^2\leq\Big(\Vert f\Vert_{L^2}+|s|^2\Vert u\Vert_{L^2}+(3M_a''+2|s_0|)M_a\Vert u\Vert_D\Big)\Vert\Delta_hu\Vert_D.
\end{equation*}
Simplifying $\Vert\Delta_hu\Vert_D$ on both sides of this inequality, we have the desired uniform boundedness \eqref{Eq_H2_regularity_4}. By \cite[Lemma~7.24]{GT01} this then implies $u\in H^2(\mathbb{R}^n)$. \medskip

Now that we know that $u\in H^2(\mathbb{R}^n)$, we can undo the integration by parts in \cite[Eq.(1.8)]{Gradient}, to rewrite the weak solution as
\begin{equation*}
\langle f,v\rangle_{L^2}=q_s(u,v)=\langle Q_s[\nabla_a]u,v\rangle_{L^2},\qquad v\in H^1(\mathbb{R}^n).
\end{equation*}
Since $v\in H^1(\mathbb{R}^n)$ is arbitrary, we conclude that $u$ is a strong solution of $Q_s[\nabla_a]u=f$.
\end{proof}

\begin{cor}
Let the coefficients $a_1,\dots,a_n$ satisfy the Assumption~\ref{ass_Coefficients}. Then
\begin{equation*}
\dom(\nabla_a^2)=H^2(\mathbb{R}^n),
\end{equation*}
and $\nabla_a^2$ acts as on any $u\in\dom(\nabla_a^2)$ as
\begin{equation}\label{Eq_Action_nabla_squared}
\nabla_a^2u=\sum\limits_{i,j=1}^ne_ie_ja_i\frac{\partial a_j}{\partial x_i}\frac{\partial u}{\partial x_j}-\sum\limits_{i=1}^na_i^2\frac{\partial^2u}{\partial x_i^2}.
\end{equation}
\end{cor}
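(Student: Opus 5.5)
We prove the two inclusions $\dom(\nabla_a^2)\subseteq H^2(\mathbb{R}^n)$ and $H^2(\mathbb{R}^n)\subseteq\dom(\nabla_a^2)$ separately, and read off the action \eqref{Eq_Action_nabla_squared} from the direct computation in the second inclusion.

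For the easy inclusion $H^2(\mathbb{R}^n)\subseteq\dom(\nabla_a^2)$, let $u\in H^2(\mathbb{R}^n)$. Then $u\in H^1(\mathbb{R}^n)=\dom(\nabla_a)$ and
\begin{equation*}
\nabla_au=\sum_{j=1}^ne_ja_j\frac{\partial u}{\partial x_j}.
\end{equation*}
Since $a_j\in C^1$ with $a_j\in L^\infty$ by \eqref{Eq_Ma} and $\frac{\partial a_j}{\partial x_i}\in L^\infty$ by \eqref{Eq_Maprimeprime}, and $\frac{\partial u}{\partial x_j}\in H^1$, the product rule gives $a_j\frac{\partial u}{\partial x_j}\in H^1(\mathbb{R}^n)$. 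Hence $\nabla_au\in H^1=\dom(\nabla_a)$, so $u\in\dom(\nabla_a^2)$. Applying $\nabla_a$ once more and differentiating with the product rule yields
\begin{equation*}
\nabla_a^2u=\sum_{i,j}e_ie_ja_i\frac{\partial a_j}{\partial x_i}\frac{\partial u}{\partial x_j}+\sum_{i,j}e_ie_ja_ia_j\frac{\partial^2u}{\partial x_i\partial x_j}.
\end{equation*}
In the second sum, the terms with $i\neq j$ cancel in pairs: $e_ie_j=-e_je_i$, while $a_ia_j$ and the mixed partials are symmetric in $i,j$. The diagonal terms contribute $e_i^2=-1$. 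This gives \eqref{Eq_Action_nabla_squared}.

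The main step is the inclusion $\dom(\nabla_a^2)\subseteq H^2(\mathbb{R}^n)$, and I would obtain it from the regularity result rather than by direct differentiation. Pick $s=J t$ purely imaginary with $t>0$, so $s_0=0$. Then $|s|>K_a|s_0|=0$, and Theorem~\ref{thm_Weak_solution} applies. Let $u\in\dom(\nabla_a^2)$ and set $f:=Q_s[\nabla_a]u=\nabla_a^2u+t^2u\in L^2$. It remains to show that $u$ solves the weak problem $q_s(u,v)=\langle f,v\rangle_{L^2}$ for all $v\in H^1$.

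For this, use $\nabla_au\in H^1$ and move one $\nabla_a$ onto $v$ via the adjoint. The relation $\langle\nabla_a w,v\rangle=\langle w,\nabla_a^*v\rangle$ holds for $w,v\in H^1$ after integrating by parts, and it produces exactly the form \eqref{Eq_qs} as computed in \cite[Eq.(1.7)]{Gradient}. The pairing $\langle\nabla_a^2u,v\rangle$ therefore equals the $\nabla_a^2$ part of $q_s(u,v)$ for $u\in H^1$ with $\nabla_au\in H^1$; this may need a density argument, approximating $v$ by test functions. This identification is the delicate point, since the integration-by-parts identity in \cite{Gradient} is derived for smooth or $H^2$ functions. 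I would justify it by approximating $v\in C_c^\infty$, where only one derivative falls on $u$ and one on $\nabla_au\in H^1$, so no $H^2$ regularity of $u$ is required.

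Once $u$ is known to be a weak solution, Theorem~\ref{thm_Strong_solution} gives $u\in H^2(\mathbb{R}^n)$. (Uniqueness from Theorem~\ref{thm_Weak_solution} is not even needed here.) This completes the inclusion and hence the equality of domains.
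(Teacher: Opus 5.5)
Your proposal follows the paper's proof: $H^2(\mathbb{R}^n)\subseteq\dom(\nabla_a^2)$ and formula \eqref{Eq_Action_nabla_squared} come from applying $\nabla_a$ twice with the product rule, and the reverse inclusion comes from showing that $u\in\dom(\nabla_a^2)$ weakly solves $q_s(u,v)=\langle Q_s[\nabla_a]u,v\rangle_{L^2}$ and then invoking Theorem~\ref{thm_Strong_solution}. (The paper simply takes $s=0$; since Theorem~\ref{thm_Strong_solution} holds for every $s$, Theorem~\ref{thm_Weak_solution} plays no role.) For the step you flag as delicate, density in $v$ alone does not settle it: the cross terms $i\neq j$ of $\langle\nabla_au,\nabla_a^*v\rangle_{L^2}$ are absent from $q_s$ and cancel only via $\partial_i\partial_ju=\partial_j\partial_iu$, so you should approximate $u$ instead, by $u_k\in H^2(\mathbb{R}^n)$ with $u_k\to u$ in $H^1(\mathbb{R}^n)$, for which $\langle\nabla_au_k,\nabla_a^*v\rangle_{L^2}=\langle\nabla_a^2u_k,v\rangle_{L^2}=q_0(u_k,v)$ by the computation of \cite{Gradient}, and then pass to the limit using that both ends are $H^1$-continuous in $u$ (the paper does the equivalent step in the sense of distributions via the representation \cite[Eq.(1.6)]{CMS24}).
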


\begin{proof}
It is clear that $H^2(\mathbb{R})\subseteq\dom(\nabla_a^2)$. For the inverse inclusion, let $u\in\dom(\nabla_a^2)$. Then with the form \eqref{Eq_qs} with $s=0$, we can write for every test function $v\in D(\mathbb{R}^n)$
\begin{align}
q_0(u,v)&=\sum_{i=1}^n\Big\langle\frac{\partial u}{\partial x_i},a_i^2\frac{\partial v}{\partial x_i}-B_ie_iv\Big\rangle_{L^2} \notag \\
&=\sum_{i=1}^n\Big\langle a_i\frac{\partial u}{\partial x_i},\frac{\partial}{\partial x_i}(a_iv)\Big\rangle_{L^2}-\sum_{i=1}^n\Big\langle\frac{\partial u}{\partial x_i},\Big(B_i-e_ia_i\frac{\partial a_i}{\partial x_i}\Big)e_iv\Big\rangle_{L^2} \notag \\
&=-\sum_{i=1}^n\Big\langle a_i\frac{\partial}{\partial x_i}\Big(a_i\frac{\partial u}{\partial x_i}\Big),v\Big\rangle_{D'\times D}-\sum_{i=1}^n\Big\langle e_i\Big(B_i-e_ia_i\frac{\partial a_i}{\partial x_i}\Big)\frac{\partial u}{\partial x_i},v\Big\rangle_{D'\times D}, \label{Eq_dom_nabla2_1}
\end{align}
where in the second last line we used integration by parts in the sense of distributions, since we a priori do not know if $a_i\frac{\partial}{\partial x_i}(a_i\frac{\partial u}{\partial x_i})\in L^2(\mathbb{R}^n)$. Using now also the explicit representation
\begin{equation*}
\nabla_a^2u=-\sum_{i=1}^na_i\frac{\partial}{\partial x_i}\Big(a_i\frac{\partial u}{\partial x_i}\Big)-\sum_{i=1}^ne_i\Big(B_i-e_ia_i\frac{\partial a_i}{\partial x_i}\Big)\frac{\partial u}{\partial x_i}\in L^2(\mathbb{R}^n),
\end{equation*}
from \cite[Eq.(1.6)]{CMS24}, we can rewrite \eqref{Eq_dom_nabla2_1} as
\begin{equation*}
q_0(u,v)=\langle\nabla_a^2u,v\rangle_{D'\times D}=\langle\nabla_a^2u,v\rangle_{L^2},\qquad v\in D(\mathbb{R}^n),
\end{equation*}
Note that in the second equation we again replaced the $D'\times D$-dual pairing by the $L^2$-inner product, since we know that $\nabla_a^2u\in L^2(\mathbb{R}^n)$. However, since $D(\mathbb{R}^n)$ is dense in $H^1(\mathbb{R}^n)$, we can extend this identity to
\begin{equation*}
q_0(u,v)=\langle\nabla_a^2u,v\rangle_{L^2},\qquad v\in H^1(\mathbb{R}^n).
\end{equation*}
This means, the function $u$ is a weak solution of \eqref{Eq_Strong_solution}, with the function $f=\nabla^2_au\in L^2(\mathbb{R}^n)$. Consequently, Theorem~\ref{thm_Strong_solution} then states $u\in H^2(\mathbb{R}^n)$. \medskip

In order to show the explicit action of $\nabla_a^2$ in \eqref{Eq_Action_nabla_squared}, let $u\in\dom(\nabla_a^2)=H^2(\mathbb{R}^n)$. Applying \eqref{Eq_Gradient} twice onto this function, and using the product rule gives
\begin{align*}
\nabla_a^2u&=\sum\limits_{i=1}^ne_ia_i\frac{\partial}{\partial x_i}\sum\limits_{j=1}^ne_ja_j\frac{\partial u}{\partial x_j}=\sum\limits_{i,j=1}^ne_ie_ja_i\Big(\frac{\partial a_j}{\partial x_i}\frac{\partial u}{\partial x_j}+a_j\frac{\partial^2u}{\partial x_i\partial x_j}\Big) \\
&=\sum\limits_{i,j=1}^ne_ie_ja_i\frac{\partial a_j}{\partial x_i}\frac{\partial u}{\partial x_j}-\sum\limits_{i=1}^na_i^2\frac{\partial^2u}{\partial x_i^2},
\end{align*}
where in the last line we used that all summands with $i\neq j$ cancel because of the anticommutativity $e_ie_j=-e_je_i$ of the Clifford imaginary units.
\end{proof}

The last main result which is missing to apply the $H^\infty$-functional calculus to the gradient, is the injectivity.

\begin{prop}\label{prop_Gradient_injective}
Let the coefficients $a_1,\dots,a_n$ satisfy the Assumption~\ref{ass_Coefficients}. Then, the gradient operator $\nabla_a$ in \eqref{Eq_Gradient} is injective.
\end{prop}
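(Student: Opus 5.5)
Suppose $u\in\dom(\nabla_a)=H^1(\mathbb{R}^n)$ satisfies $\nabla_au=0$; we must show $u=0$. The plan is to exploit the Clifford structure: the imaginary units $e_1,\dots,e_n$ are linearly independent over the Clifford algebra acting from the left on components. Write $u=\sum_A u_Ae_A$ with real-valued $u_A\in H^1(\mathbb{R}^n)$, where $A$ ranges over subsets of $\{1,\dots,n\}$. Then $\nabla_au=\sum_i\sum_A a_i\frac{\partial u_A}{\partial x_i}e_ie_A$. Since the coefficients $a_i$ and the derivatives $\partial_{x_i}u_A$ are real, I would regroup this sum by basis elements $e_B$. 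For fixed $A$, the products $e_ie_A$, $i=1,\dots,n$, are $\pm e_{B}$ with pairwise distinct $B=A\triangle\{i\}$. Conversely, a given $e_B$ arises from the pairs $(i,A)$ with $A=B\triangle\{i\}$, one for each $i$.

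This regrouping alone does not immediately decouple the components. A cleaner route is to avoid it entirely and use the scalar part of a suitable product. Because $\overline{e_i}=-e_i$ and $\Sc(\overline{e_i}e_j)=\delta_{ij}$, the Clifford-valued inner product gives
\begin{equation*}
0=\|\nabla_au\|_{L^2}^2=\sum_{i,j}\int\Sc\Big(\overline{a_i\partial_iu}\,\overline{e_i}e_j\,a_j\partial_ju\Big)dx .
\end{equation*}
This is not diagonal either, because $e_A$ sits inside. The cleanest approach is therefore componentwise: $\nabla_au=0$ means $\sum_i e_i a_i\partial_iu=0$ in $L^2(\mathbb{R}^n;\mathbb{R}_n)$. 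Multiply from the left by $e_k$ and take the real part of $\overline{u}$-weighted expressions — or, more simply, take the scalar part after left multiplication by $\overline{e_k}$. Each term then contributes $\delta_{ik}$ times the real part of $a_i\partial_iu$, plus parts that are not scalar.

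Since that still mixes components, I would instead proceed analytically. Pick the key fact that $|\sum_ie_iw_i|^2=\sum_i|w_i|^2$ holds whenever the $w_i\in\mathbb{R}_n$ are such that the cross terms vanish, which fails in general. The robust alternative is this: from $\nabla_au=0$ we get $\nabla_a^2u=0$, so $u\in\dom(\nabla_a^2)=H^2(\mathbb{R}^n)$ by the preceding Corollary, and $Q_0[\nabla_a]u=0$. Now take any $s=J t$ with $t>0$ real, so that $s_0=0$ and $|s|>K_a|s_0|=0$. Then $Q_s[\nabla_a]u=\nabla_a^2u+t^2u=t^2u$. By Theorem~\ref{thm_Weak_solution}, $u$ is the unique weak solution with right-hand side $f=t^2u$, and the bound \eqref{Eq_Estimate_weak_solution} yields
\begin{equation*}
\|u\|_{L^2}\leq\Big(\frac{M_a}{m_a\sqrt n}\Big)^{\frac n2}\frac{t^2\|u\|_{L^2}}{t^2}=c\,\|u\|_{L^2}.
\end{equation*}
This gives nothing useful unless $c<1$, so I would use the derivative bound instead. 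We have $\|u\|_D\le c\,K_a t\,\|u\|_{L^2}/m_a$, which holds for all $t>0$, and letting $t\to0^+$ gives $\|u\|_D=0$. Hence all first partial derivatives of $u$ vanish, so $u$ is constant, and $u\in L^2(\mathbb{R}^n)$ forces $u=0$.

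The main obstacle is justifying that $u$ coincides with the weak solution $u_f$ for $f=t^2u$. By the integration by parts formula behind \eqref{Eq_qs}, any $u\in H^2$ satisfies $q_s(u,v)=\langle Q_s[\nabla_a]u,v\rangle$, so $u$ is a weak solution, and uniqueness from Theorem~\ref{thm_Weak_solution} identifies it with $u_f$. Some care is needed to check that the estimate's dependence on $f$ is as claimed, with $s_0=0$ making the denominator exactly $t^2$.
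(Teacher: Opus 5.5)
Your final argument (the last two paragraphs) is correct, but it takes a different route from the paper.

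The paper works at $s=0$. From $Q_0[\nabla_a]u=0$ it tests the weak formulation with $v=u$ and uses the ellipticity estimate $\Sc q_0(u,u)\geq(m_a^2-C_SM_a')\Vert u\Vert_D^2$, which gives $\Vert u\Vert_D=0$ immediately. Since this estimate needs the smallness condition \eqref{Eq_Coefficient_inequality}, case II is handled separately: the change of variables $y_i=\xi_i(x_i)$ reduces $\nabla_a$ to the constant-coefficient gradient.

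You instead work at $s=Jt$, where $s_0=0$ makes $|s|>K_a|s_0|$ automatic in both cases. You identify $u$ with the unique weak solution for $f=t^2u$, which is justified because $q_{Jt}(u,v)=q_0(u,v)+t^2\langle u,v\rangle_{L^2}=\langle\nabla_a^2u+t^2u,v\rangle_{L^2}$ by the identity for $u\in\dom(\nabla_a^2)$ proved in the preceding corollary. You then use that the $\Vert\cdot\Vert_D$-bound in \eqref{Eq_Estimate_weak_solution} scales like $\Vert f\Vert_{L^2}/t$, so $\Vert u\Vert_D\leq Ct\Vert u\Vert_{L^2}\to0$.

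Your route treats both cases uniformly and avoids any explicit change of variables. It can do so only because Theorem~\ref{thm_Weak_solution} has already absorbed that work: its case I bound rests on the same ellipticity constant $m_a^2-C_SM_a'$, and its case II bound comes from exactly the same transformation. The paper's case I argument is more elementary, needing neither uniqueness nor a limit $t\to0^+$. Yours is shorter once the quantitative resolvent bounds are available. As you noticed, it works only because of the gradient bound, not the $L^2$ bound.

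Finally, delete the exploratory first half of your proposal. The componentwise and scalar-part attempts are abandoned, and some claims there are not correct as stated (e.g.\ that $e_1,\dots,e_n$ are linearly independent over $\mathbb{R}_n$).
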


\begin{proof}
We distinguish whether the coefficients $a_1,\dots,a_n$ satisfy Assumption~\ref{ass_Coefficients}~I) or II). \medskip

In case I), let $u\in H^1(\mathbb{R}^n)$ with $\nabla_au=0$. Then clearly $u\in\dom(\nabla_a^2)$, with $Q_0[\nabla_a]u=0$. Hence $u$ must also be a weak solution of the equation
\begin{equation*}
q_0(u,v)=0,\qquad v\in H^1(\mathbb{R}^n).
\end{equation*}
In particular, for the special choice $v=u$, we have
\begin{equation}\label{Eq_Injectivity_1}
q_0(u,u)=0.
\end{equation}
However, from equation \cite[Eq.(3.7)]{Gradient} we know that the form $q_0$ is elliptic, i.e.
\begin{equation}\label{Eq_Injectivity_2}
\Sc q_0(u,u)\geq(m_a^2-C_SM_a')\Vert u\Vert_D^2.
\end{equation}
Combining now \eqref{Eq_Injectivity_1} and \eqref{Eq_Injectivity_2}, gives us $\Vert u\Vert_D=0$ and consequently $u$ is constant, which on $\mathbb{R}^n$ can only be the case if $u\equiv 0$. \medskip

In case II), let again $u\in H^1(\mathbb{R}^n)$ with $\nabla_au=0$. Using the transformation of variables \eqref{Eq_Transformation_of_variables}, as well as the function $\widetilde{u}(y)=u(x)$ from \eqref{Eq_utilde}, we can write
\begin{equation*}
0=\nabla_au(x)=\sum\limits_{i=1}^ne_ia_i(x_i)\frac{\partial}{\partial x_i}u(x)=\sum\limits_{i=1}^ne_i\frac{\partial}{\partial y_i}\widetilde{u}(y)=\nabla\widetilde{u}(y).
\end{equation*}
Since we know that $\nabla$ is injective by case I), we conclude that $\widetilde{u}(y)=0$, for every $y\in\mathbb{R}^n$ and hence also $u(x)=0$, for every $x\in\mathbb{R}^n$.
\end{proof}

Now we are in the position to combine the above results and prove that the gradient $\nabla_a$ is an injective bisectorial operator, and hence we are allowed to apply the $H^\infty$-functional calculus to it.

\begin{thm}\label{thm_Gradient_bisectorial}
Let the coefficients $a_1,\dots,a_n$ satisfy Assumption~\ref{ass_Coefficients}, and set $K_a$ as in \eqref{Eq_Ka}. Then the gradient operator $\nabla_a$ in \eqref{Eq_Gradient} is densely defined, injective and bisectorial of any angle
\begin{equation}\label{Eq_omega_range}
\arctan\big(\sqrt{K_a^2-1}\big)<\omega<\frac{\pi}{2}.
\end{equation}
\end{thm}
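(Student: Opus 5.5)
Densely defined is immediate, since $\dom(\nabla_a)=H^1(\mathbb{R}^n)$ contains the test functions and is therefore dense in $L^2(\mathbb{R}^n)$. Injectivity is Proposition~\ref{prop_Gradient_injective}. The remaining work is the bisectoriality: we need the inclusion $\sigma_S(\nabla_a)\subseteq\overline{D_\omega}$ together with the resolvent estimate \eqref{Eq_SL_estimate}, for every $\omega$ in the range \eqref{Eq_omega_range}.

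\emph{Spectral inclusion.} For $s=s_0+J\tilde s$ we have $|s|>K_a|s_0|$ if and only if $\tilde s^2>(K_a^2-1)s_0^2$. In polar form $s=|s|e^{J\phi}$ this reads $|\tan\phi|>\sqrt{K_a^2-1}$, so the set $\{|s|>K_a|s_0|\}$ is exactly the complement of $\overline{D_{\omega_0}}$ with $\omega_0:=\arctan(\sqrt{K_a^2-1})$. Fix $\varphi\in(\omega_0,\frac{\pi}{2})$ and $s\notin D_\varphi\cup\{0\}$. By Theorem~\ref{thm_Weak_solution}, for each $f\in L^2$ there is a unique weak solution $u_f\in H^1$ of $q_s(u_f,\cdot)=\langle f,\cdot\rangle$. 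By Theorem~\ref{thm_Strong_solution}, $u_f\in H^2=\dom(\nabla_a^2)$ and $Q_s[\nabla_a]u_f=f$, so $Q_s[\nabla_a]$ is surjective.

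For injectivity of $Q_s[\nabla_a]$, take $u\in H^2$ with $Q_s[\nabla_a]u=0$. Integration by parts (the identity \cite[Eq.(1.8)]{Gradient}) shows that $u$ is a weak solution with $f=0$, and uniqueness in Theorem~\ref{thm_Weak_solution} gives $u=0$. The inverse $Q_s[\nabla_a]^{-1}f=u_f$ is bounded by \eqref{Eq_Estimate_weak_solution}, hence $s\in\rho_S(\nabla_a)$. This holds for every $\varphi>\omega_0$, so $\sigma_S(\nabla_a)\subseteq\overline{D_{\omega_0}}\subseteq\overline{D_\omega}$.

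\emph{Resolvent estimate.} Write $S_L^{-1}(s,\nabla_a)=Q_s[\nabla_a]^{-1}\overline{s}-\nabla_aQ_s[\nabla_a]^{-1}$ and bound the two terms separately with \eqref{Eq_Estimate_weak_solution}. Outside $D_\varphi$ we have $|s_0|\le|s|\cos\varphi$, so
\[
|s|^2-K_a^2s_0^2\ge|s|^2(1-K_a^2\cos^2\varphi)=:c_\varphi|s|^2 .
\]
Here $c_\varphi>0$ because $\varphi>\omega_0$ is equivalent to $K_a\cos\varphi<1$. The first term then satisfies $\Vert Q_s^{-1}\overline{s}\Vert\le C/(c_\varphi|s|)$. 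For the second term, $\Vert\nabla_au\Vert_{L^2}\le\Vert a\Vert_\infty\Vert u\Vert_D\lesssim M_a\Vert u\Vert_D$, and the derivative bound in \eqref{Eq_Estimate_weak_solution} gives $\Vert u_f\Vert_D\lesssim K_a|s|\Vert f\Vert/(m_a c_\varphi|s|^2)$. Together these give $\Vert\nabla_aQ_s^{-1}\Vert\le C'/|s|$, which proves \eqref{Eq_SL_estimate} with $C_\varphi$ depending on $c_\varphi$, $M_a$, $m_a$, $K_a$ and $n$.

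The main obstacle is bookkeeping rather than a new idea. One must check that the elementary equivalence $|s|>K_a|s_0|\Leftrightarrow s\notin\overline{D_{\omega_0}}$ holds in every slice $\mathbb{C}_J$ (the $S$-spectrum is axially symmetric, so this suffices). One must also make sure that uniqueness of strong solutions really follows from uniqueness of weak solutions, which relies on the integration by parts identity being valid for $u\in H^2$ and $v\in H^1$.
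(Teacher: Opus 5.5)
Your spectral inclusion and resolvent estimate follow the paper's proof closely. Theorem~\ref{thm_Weak_solution} and Theorem~\ref{thm_Strong_solution} give bijectivity of $Q_s[\nabla_a]$ whenever $|s|>K_a|s_0|$, and \eqref{Eq_Estimate_weak_solution} bounds $Q_s[\nabla_a]^{-1}\overline{s}$ and $\nabla_aQ_s[\nabla_a]^{-1}$ separately. Your constant $1/(1-K_a^2\cos^2\varphi)$ is exactly the paper's $(1+\tan^2\varphi)/(1+\tan^2\varphi-K_a^2)$. Your explicit strong-implies-weak check for injectivity of $Q_s[\nabla_a]$ is more careful than the paper's ``unique solution''.

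What is missing is closedness of $\nabla_a$. Definition~\ref{defi_Bisectorial_operator} requires $T\in\mathcal{K}(V)$, and Definition~\ref{defi_S_spectrum} is only formulated for closed operators, so closedness is part of the claim. The paper settles it in the last paragraph of its proof: $J\in\rho_S(\nabla_a)$ and \cite[Theorem~2.4]{ADJOINT} give a bounded inverse of $\nabla_a-\mathcal{I}^RJ$, whence $\nabla_a$ is closed. This is not automatic here. For $\mathbb{R}_n$-valued $u$, the pointwise quantity $|\sum_ie_ia_i\partial_iu|^2$ has in general non-vanishing cross terms. So $\Vert u\Vert_D$ is not obviously controlled by the graph norm of $\nabla_a$.

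You can fill this with the bounds you already have. Take $s=J\in\mathbb{S}$, which is admissible since $|J|=1>K_a\cdot 0$, and set $R:=Q_J[\nabla_a]^{-1}=(\nabla_a^2+1)^{-1}$. Let $\mathcal{R}_Jv:=vJ$ be right multiplication by $J$. It is bounded, satisfies $\mathcal{R}_J^2=-1$, and commutes with $\nabla_a$ and $R$ because $\nabla_a$ is right linear.

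Then $B:=(\nabla_a+\mathcal{R}_J)R$ is bounded by your estimate of $\nabla_aQ_J[\nabla_a]^{-1}$. A direct computation gives $(\nabla_a-\mathcal{R}_J)B=1$ on $L^2(\mathbb{R}^n)$ and $B(\nabla_a-\mathcal{R}_J)=1$ on $H^1(\mathbb{R}^n)$. The second identity uses $R\nabla_av=\nabla_aRv$ for $v\in H^1(\mathbb{R}^n)$. This holds because $w=Rv$ satisfies $\nabla_a^2w=v-w\in H^1(\mathbb{R}^n)$, hence $\nabla_aw\in\dom(\nabla_a^2)$ and $Q_J[\nabla_a]\nabla_aw=\nabla_av$.

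Thus $\nabla_a-\mathcal{R}_J=B^{-1}$ is closed. So $\nabla_a$ is closed too, being a bounded perturbation of it.
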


\begin{proof}
The domain $H^1(\mathbb{R}^n)$ of $\nabla_a$ is clearly dense in $L^2(\mathbb{R}^n)$ and the injectivity is proven in Proposition~\ref{prop_Gradient_injective}. In order to show that $\nabla_a$ is bisectorial, we have to check the Definition~\ref{defi_Bisectorial_operator}. To do so, let $s\in\mathbb{R}^{n+1}\setminus\overline{D_\omega}$, i.e. $|\Im(s)|>\tan(\omega)|\Sc(s)|$. By the choice of $\omega$ in \eqref{Eq_omega_range}, this implies
\begin{equation*}
|s|>K_a|s_0|.
\end{equation*}
Hence, by Theorem~\ref{thm_Weak_solution} and Theorem~\ref{thm_Strong_solution}; there exists for every $f\in L^2(\mathbb{R}^n)$ a unique solution $u_f\in\dom(\nabla_a^2)$, of
\begin{equation*}
Q_s[\nabla_a]u_f=f.
\end{equation*}
This shows that the operator $Q_s[\nabla_a]$ is bijective, and consequently, we have proven that
\begin{equation*}
\mathbb{R}^{n+1}\setminus\overline{D_\omega}\subseteq\rho_S(\nabla_a),
\end{equation*}
which is exactly \eqref{Eq_Bisectorial_Spectrum}. Moreover, by \eqref{Eq_Estimate_weak_solution}, the inverse operator satisfies the estimate
\begin{equation}\label{Eq_Gradient_bisectorial_1}
\Vert Q_s[\nabla_a]^{-1}f\Vert_{L^2}=\Vert u_f\Vert_{L^2}\leq\Big(\frac{M_a}{m_a\sqrt{n}}\Big)^{\frac{n}{2}}\frac{\Vert f\Vert_{L^2}}{|s|^2-K_a^2s_0^2}.
\end{equation}
Also by \eqref{Eq_Estimate_weak_solution}, the gradient of the pseudo $S$-resolvent admits the upper bound
\begin{align}
\Vert\nabla_aQ_s[\nabla_a]^{-1}f\Vert_{L^2}&\leq\sum\limits_{i=1}^n\Big\Vert a_i\frac{\partial u_f}{\partial x_i}\Big\Vert_{L^2}\leq\sum\limits_{i=1}^n\Vert a_i\Vert_{L^\infty}\Big\Vert\frac{\partial u_f}{\partial x_i}\Big\Vert_{L^2} \notag \\
&\leq\Big(\sum\limits_{i=1}^n\Vert a_i\Vert_{L^\infty}^2\Big)^{\frac{1}{2}}\Big(\sum\limits_{i=1}^n\Big\Vert\frac{\partial u_f}{\partial x_i}\Big\Vert_{L^2}^2\Big)^{\frac{1}{2}} \notag \\
&\leq M_a\Big(\frac{M_a}{m_a\sqrt{n}}\Big)^{\frac{n}{2}}\frac{K_a|s|\Vert f\Vert_{L^2}}{m_a(|s|^2-K_a^2s_0^2)}. \label{Eq_Gradient_bisectorial_2}
\end{align}
Combining now \eqref{Eq_Gradient_bisectorial_1} and \eqref{Eq_Gradient_bisectorial_2}, we can estimate the the left $S$-resolvent \eqref{Eq_SL_SR} by
\begin{align}
\Vert S_L^{-1}(s,\nabla_a)\Vert&\leq\Vert Q_s[\nabla_a]^{-1}\overline{s}\Vert+\Vert\nabla_aQ_s[\nabla_a]^{-1}\Vert \notag \\
&\leq\Big(\frac{M_a}{m_a\sqrt{n}}\Big)^{\frac{n}{2}}\Big(1+\frac{M_aK_a}{m_a}\Big)\frac{|s|}{|s|^2-K_a^2s_0^2},\qquad s\in\mathbb{R}^{n+1}\setminus\overline{D_\omega}. \label{Eq_Gradient_bisectorial_3}
\end{align}
For the estimate \eqref{Eq_SL_estimate}, let us fix now $\varphi\in(\omega,\frac{\pi}{2})$. Then for every $s\in\mathbb{R}^{n+1}\setminus(D_\varphi\cup\{0\})$, there is $|\Im(s)|\geq\tan(\varphi)|s_0|$ and consequently
\begin{equation*}
|s|^2\geq\big(1+\tan^2(\varphi)\big)^2s_0^2.
\end{equation*}
Taking into account \eqref{Eq_Gradient_bisectorial_3}, we then have
\begin{align*}
\Vert S_L^{-1}(s,\nabla_a)\Vert&\leq\Big(\frac{M_a}{m_a\sqrt{n}}\Big)^{\frac{n}{2}}\Big(1+\frac{M_aK_a}{m_a}\Big)\frac{|s|}{|s|^2-K_a^2\frac{|s|^2}{1+\tan^2(\varphi)}} \\
&=\Big(\frac{M_a}{m_a\sqrt{n}}\Big)^{\frac{n}{2}}\Big(1+\frac{M_aK_a}{m_a}\Big)\frac{1+\tan^2(\varphi)}{1+\tan^2(\varphi)-K_a^2}\frac{1}{|s|}.
\end{align*}
Finally, let $J \in \mathbb{S}$. We have just established that $J \in \rho_S(\nabla_a)$ and, therefore, by \cite[Theorem 2.4]{ADJOINT}, the operator $\nabla_a - \mathcal{I}^R J$ admits a bounded inverse. In particular, $\nabla_a - \mathcal{I}^R J$ is closed and, since $\mathcal{I}^R J$ is a bounded operator, it follows that $\nabla_a$ is also a closed operator.
\end{proof}

Next we want to prove that for the gradient operator with coefficients which satisfy the case II) of Assumption~\ref{ass_Coefficients}, the $S$-spectrum is the whole real line.

\begin{prop}
Let the coefficients $a_1,\dots,a_n$ satisfy Assumption~\ref{ass_Coefficients} case II). Then
\begin{equation*}
\sigma_S(\nabla_a)=\mathbb{R}.
\end{equation*}
\end{prop}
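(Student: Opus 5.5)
We prove the two inclusions $\sigma_S(\nabla_a)\subseteq\mathbb{R}$ and $\mathbb{R}\subseteq\sigma_S(\nabla_a)$ separately. In both directions we first reduce to the constant coefficient gradient $\nabla$ via the change of variables \eqref{Eq_Transformation_of_variables}.

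\emph{Upper inclusion.} In case II) we have $K_a=1$ in \eqref{Eq_Ka}. Hence Theorem~\ref{thm_Weak_solution} and Theorem~\ref{thm_Strong_solution} give, for every $s$ with $|s|>|s_0|$ (that is, $\Im(s)\neq 0$) and every $f\in L^2(\mathbb{R}^n)$, a unique $u_f\in\dom(\nabla_a^2)$ with $Q_s[\nabla_a]u_f=f$. The bound \eqref{Eq_Estimate_weak_solution} shows that $Q_s[\nabla_a]^{-1}$ is bounded. Therefore every non-real paravector lies in $\rho_S(\nabla_a)$, i.e.\ $\sigma_S(\nabla_a)\subseteq\mathbb{R}$. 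This is the computation in the proof of Theorem~\ref{thm_Gradient_bisectorial}, now without the angle restriction.

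\emph{Lower inclusion.} Fix $s=s_0\in\mathbb{R}$. Then $Q_{s_0}[\nabla_a]=(\nabla_a-s_0)^2$. We show it is not boundedly invertible by constructing approximate eigenfunctions (a Weyl sequence) $u_k\in H^2(\mathbb{R}^n)$ with $\Vert u_k\Vert_{L^2}=1$ and $\Vert(\nabla_a-s_0)u_k\Vert_{L^2}\to 0$ and $\Vert(\nabla_a-s_0)^2u_k\Vert_{L^2}\to0$. Bounded invertibility of $Q_{s_0}$ would force $\Vert u_k\Vert\leq C\Vert Q_{s_0}u_k\Vert\to0$, a contradiction.

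We build the sequence in the $y$-variables. There $\nabla_a$ becomes $\nabla=\sum e_i\partial_{y_i}$, and we choose a plane wave adapted to the Clifford structure. Set
\[
\widetilde{u}_k(y)=\chi_k(y)\,e^{-Je_1 s_0y_1}\cdot\tfrac{1}{2}(1-Je_1)\quad\text{with }J\in\mathbb{S}\text{ anticommuting with }e_1,
\]
or, more simply, use the fact that $e_1\partial_{y_1}$ applied to $\cos(s_0y_1)-e_1\sin(s_0y_1)$ gives $s_0$ times that function up to order. The aim is a function $w(y_1)$ with $e_1w'=s_0w$, e.g.\ $w(y_1)=e^{-e_1 s_0 y_1}$ multiplied on the right by a constant. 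Here $\chi_k(y)=k^{-n/2}\chi(y/k)$ is a smooth cutoff normalized in $L^2$. Then $(\nabla-s_0)\widetilde{u}_k$ involves only derivatives of $\chi_k$, which are $O(1/k)$ in $L^2$, and similarly for the second power. We then transport back via $u_k(x)=\widetilde{u}_k(\xi(x))$. The $L^2$ norms change only by the bounded factors $\sqrt{a_1\cdots a_n}\in[m_a^{n/2},\Vert a\Vert_\infty^{n/2}]$, and $\nabla_a u_k(x)=(\nabla\widetilde{u}_k)(\xi(x))$ by the chain rule computed in Theorem~\ref{thm_Weak_solution}. So the Weyl sequence property persists.

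The main obstacle is the noncommutative bookkeeping. We must check that $e_1w'=s_0w$ holds with the chosen ordering of the Clifford factors; left multiplication by $e_1$ and the exponential in $e_1$ commute, so this should be fine. We must also check that $S$-invertibility, i.e.\ bijectivity of $Q_{s_0}$ with bounded inverse, is genuinely violated. Since $Q_{s_0}$ has real coefficients, the Weyl argument applies directly. Finally, $s_0=0$ needs no special care: constants times cutoffs work, which is consistent with injectivity from Proposition~\ref{prop_Gradient_injective} together with non-surjectivity.
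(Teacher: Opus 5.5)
Your proof is correct. The upper inclusion is essentially the paper's, but for $\mathbb{R}\subseteq\sigma_S(\nabla_a)$ you take a genuinely different route.

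\textbf{The paper's route.} It constructs nothing. It uses the correspondence $u\leftrightarrow\widetilde u$, $v\leftrightarrow\widehat v$ between the forms $q_s$ and $\widetilde q_s$, together with Theorem~\ref{thm_Strong_solution}, to show that $Q_s[\nabla_a]$ is bijective if and only if $Q_s[\nabla]$ is. This gives $\sigma_S(\nabla_a)=\sigma_S(\nabla)$, and the paper then imports $\sigma_S(\nabla)=\mathbb{R}$ from \cite{FJBOOK}. Both inclusions come out at once.

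\textbf{Your route.} Your upper inclusion uses the same ingredients as Theorem~\ref{thm_Gradient_bisectorial}, namely Theorems~\ref{thm_Weak_solution} and~\ref{thm_Strong_solution} with $K_a=1$. Your lower inclusion is self-contained:
\begin{itemize}
\item the Clifford plane wave $w(y_1)=e^{-e_1s_0y_1}c$ satisfies $\nabla w=s_0w$;
\item the cutoff only produces $O(k^{-1})$ errors;
\item the chain rule $\nabla_a(g\circ\xi)=(\nabla g)\circ\xi$, together with the two-sided comparability of $dx$ and $dy$, transports the Weyl sequence back to the $x$-variables.
\end{itemize}

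\textbf{What each buys.} Your route avoids the external result. It shows the failure of \emph{bounded} invertibility directly, whereas the paper argues via bijectivity and tacitly uses the closed graph theorem. It also gives slightly more: every real $s_0$ is an approximate eigenvalue of $\nabla_a$ itself. The paper's route is shorter and yields the structural identity $\sigma_S(\nabla_a)=\sigma_S(\nabla)$. Your chain-rule observation already gives that identity in one line. Indeed, $\Phi g:=g\circ\xi$ is bounded with bounded inverse on $L^2(\mathbb{R}^n)$, maps $H^1$ onto $H^1$, and satisfies $\nabla_a=\Phi\nabla\Phi^{-1}$. Hence $Q_s[\nabla_a]=\Phi\, Q_s[\nabla]\,\Phi^{-1}$.

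\textbf{Two small clean-ups.}

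First, discard your initial ansatz with $J$ and $\frac12(1-Je_1)$. Since $e_1(Je_1)=J$ and $J$ anticommutes with $Je_1$, it does not satisfy $\nabla w=s_0w$. The second choice $w=e^{-e_1s_0y_1}c$ is the one that works.

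Second, in the second power the noncommutativity leaves a first-order term, because $e_1$ anticommutes with $e_i$ for $i\geq 2$:
\[
(\nabla-s_0)^2\widetilde u_k=-(\Delta\chi_k)\,w-2s_0\sum_{i=2}^n e_i\,\frac{\partial\chi_k}{\partial y_i}\,w .
\]
This term is still $O(k^{-1})$ in $L^2$, so your conclusion stands.
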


\begin{proof}
With the connection \eqref{Eq_u_utilde_connection} between $u$ and $\widetilde{u}$ as well as between $v$ and $\widehat{v}$, as well as the equivalence \eqref{Eq_qs_qstilde_equivalence_1} and \eqref{Eq_qs_qstilde_equivalence_2} of the solvability of $q_s$ and $\widetilde{q}_s$, we conclude from Theorem~\ref{thm_Strong_solution}, that for every $f\in L^2(\mathbb{R}^n)$ there holds the equivalence
\begin{align*}
Q_s[\nabla_a]u=f\quad&\Leftrightarrow\quad q_s(u,v)=\langle f,v\rangle_{L^2},\quad\text{for all }v\in H^1(\mathbb{R}^n), \\
&\Leftrightarrow\quad\widetilde{q}_s(\widetilde{u},\widehat{v})=\langle\widetilde{f},\widetilde{v}\rangle_{L^2},\quad\text{for all }\widehat{v}\in H^1(\mathbb{R}^n), \\
&\Leftrightarrow\quad Q_s[\nabla]\widetilde{u}=\widetilde{f}.
\end{align*}
This equivalence proves that $Q_s[\nabla_a]$ is bijective if and only if $Q_s[\nabla]$ is bijective. Consequently, there is $\sigma_S(\nabla_a)=\sigma_S(\nabla)$, and since we already know from \cite[Theorem~9.1.1]{FJBOOK} that $\sigma_S(\nabla)=\mathbb{R}$, the statement of the proposition is proven.
\end{proof}

The previous Theorem~\ref{thm_Gradient_bisectorial} has shown that the gradient operator $\nabla_a$ is well suited to apply the $H^\infty$-functional calculus from \cite[Definition~5.3]{MS24}. In particular, the fractional powers $p_\alpha(\nabla_a)$ and $q_\alpha(\nabla_a)$ from Definition~\ref{defi_palphaT_positive} and Definition~\ref{defi_palphaT_negative} of the gradient are well defined for every $\alpha\in\mathbb{R}$. By its definition in \eqref{Eq_Gradient}, the gradient operator $\nabla_a$ consists of partial derivatives which are multiplied with the imaginary units. As a direct consequence of this definition, for every real valued $f\in H^1(\mathbb{R}^n)$, there is
\begin{equation*}
\nabla_af\text{ has values in }\mathbb{R}^n:=\big\{s_1e_1+\dots+s_ne_n\;\big|\;s_1,\dots,s_n\in\mathbb{R}\big\}.
\end{equation*}
The following theorem proves that the fractional powers $p_\alpha$ preserve this property and that the fractional powers $q_\alpha$ is doing the opposite, it gives back a real valued function. This property is for example crucial if we consider the heat equation \eqref{Eq_Fractional_heat_equation}. Namely, if $p_\alpha(\nabla_a)u$ is not a vector function, it is not possible to apply the divergence to it.

\begin{thm}\label{thm_palpha_vector_operator}
Let the coefficients $a_1,\dots,a_n$ satisfy Assumption~\ref{ass_Coefficients} case II), and $\alpha\in\mathbb{R}$ be arbitrary.

\begin{enumerate}
\item[i)] If $u\in\dom(p_\alpha(\nabla_a))$ is real valued, then $p_\alpha(\nabla_a)u$ has values in $\mathbb{R}^n$. \medskip

\item[ii)] If $u\in\dom(q_\alpha(\nabla_a))$ is real valued, then $q_\alpha(\nabla_a)u$ is real valued as well.
\end{enumerate}
\end{thm}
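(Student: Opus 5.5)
We reduce case II) to the constant coefficient gradient $\nabla$ via the change of variables \eqref{Eq_Transformation_of_variables}, and then use Fourier analysis for $\nabla$. Write $\Phi u(y):=u(x)$ with $y=\xi(x)$, i.e.\ $\Phi u=\widetilde u$ as in \eqref{Eq_utilde}. As computed in the proof of Theorem~\ref{thm_Weak_solution}, $\Phi$ maps $H^1$ onto $H^1$, and we have $\Phi\nabla_a=\nabla\Phi$ on $H^1(\mathbb{R}^n)$. The map $\Phi$ is a bounded bijection of $L^2(\mathbb{R}^n)$ (the Jacobian $a_1\cdots a_n$ is bounded above and below), though not unitary. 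It commutes with left multiplication by Clifford numbers, and it maps real valued functions to real valued functions and $\mathbb{R}^n$-valued functions to $\mathbb{R}^n$-valued ones.

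\textbf{Step 1.} From $\Phi\nabla_a\Phi^{-1}=\nabla$ we get $\Phi Q_s[\nabla_a]\Phi^{-1}=Q_s[\nabla]$, hence $S_L^{-1}(s,\nabla_a)=\Phi^{-1}S_L^{-1}(s,\nabla)\Phi$. Inserting this into \eqref{Eq_Omega}, and using that $\Phi$ is bounded and commutes with $ds_J f(s)$ (a scalar Clifford factor acting on the right), gives $f(\nabla_a)=\Phi^{-1}f(\nabla)\Phi$ for the $\omega$-functional calculus. The same identity then follows for the extended calculus \eqref{Eq_Extended}, and, via $e(\nabla_a)^{-1}=\Phi^{-1}e(\nabla)^{-1}\Phi$, for the $H^\infty$-calculus \eqref{Eq_Hinfty}, including equality of domains. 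In particular
\[
p_\alpha(\nabla_a)=\Phi^{-1}p_\alpha(\nabla)\Phi,\qquad q_\alpha(\nabla_a)=\Phi^{-1}q_\alpha(\nabla)\Phi .
\]
It therefore suffices to prove both statements for $\nabla$.

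\textbf{Step 2: the constant coefficient case.} By Theorem~\ref{thm_palpha_qalpha_T2}, $q_\alpha(\nabla)=(\nabla^2)^{\alpha/2}=(-\Delta)^{\alpha/2}$. Here $\nabla^2=-\Delta$ acts componentwise and is a real scalar operator. Its sectorial $H^\infty$-calculus with the intrinsic function $\xi^{\alpha/2}$ is given by resolvent integrals of $-\Delta$, which preserve real valuedness; alternatively one identifies it with the Fourier multiplier $|k|^\alpha$, which has an even real symbol. Hence $q_\alpha(\nabla)$ maps real functions to real functions.

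For $p_\alpha$ we use $p_\alpha(\nabla)=\overline{\sgn(\nabla)(-\Delta)^{\alpha/2}}$. I would rather compute directly: with the regularizer $e(s)=s^{m}(1+s^2)^{-m}$ for $m$ odd, we have $p_\alpha(s)e(s)=s\,g(s^2)$, where $g(\xi)=\xi^{(\alpha-1)/2}\xi^{(m-1)/2}(1+\xi)^{-m}$ is intrinsic. The integral splitting used in the proof of Theorem~\ref{thm_palpha_qalpha_T2}, now with the odd function, together with \eqref{Eq_SL_T2}, gives $(ep_\alpha)(\nabla)=\nabla\, g(-\Delta)$. Likewise $e(\nabla)=\nabla\,\widetilde g(-\Delta)$ with $\widetilde g(\xi)=\xi^{(m-1)/2}(1+\xi)^{-m}$. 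Both $g(-\Delta)$ and $\widetilde g(-\Delta)$ preserve real valuedness, and $\nabla$ maps real functions to $\mathbb{R}^n$-valued ones. Now let $u$ be real and $w=p_\alpha(\nabla)u$. Then $w$ satisfies $\nabla\widetilde g(-\Delta)w=\nabla g(-\Delta)u$. Injectivity of $\nabla$ gives $\widetilde g(-\Delta)w=g(-\Delta)u$, which is real. Since $\widetilde g(-\Delta)$ is injective and real-structure preserving, we get $w=r_1+\sum_i e_i\,\cdot$ (components). More carefully, one needs to write $e(\nabla)^{-1}(ep_\alpha)(\nabla)$ in Fourier variables as the multiplier $i\,\frac{\sum_j e_jk_j}{|k|}|k|^{\alpha}$ applied to $\hat u$. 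Concretely, $\nabla$ has symbol $i\sum_j e_jk_j$, with $e_j$ acting as the left Clifford factor, commuting with the complexification.

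\textbf{Step 3: conclusion.} Real valuedness is transported by $\Phi^{-1}$, which gives i) and ii).

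The main obstacle is Step 2 for $p_\alpha$: the product $(ep_\alpha)(\nabla)=\nabla\,g(-\Delta)$ requires carrying out the odd version of the computation in Theorem~\ref{thm_palpha_qalpha_T2}. One must also check carefully that the real-scalar calculus of $-\Delta$ preserves real valued functions. I would handle the latter via the Fourier transform applied componentwise to real valued $L^2$ functions, where the symbols are real and even in $k$.
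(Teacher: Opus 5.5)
Your Step 1 (conjugating by the change of variables $\Phi$, so that $f(\nabla_a)=\Phi^{-1}f(\nabla)\Phi$) and your argument for ii) are fine. The argument for ii) is the paper's; the paper applies it directly to $\nabla_a^2$, which in case II) is already a real scalar operator by \eqref{Eq_Vector_operator_2}, so the reduction is not needed there.

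The gap is in i). With $m$ odd, both $e(s)=s^m(1+s^2)^{-m}$ and $p_\alpha$ are odd functions, so their product is even:
\[
p_\alpha(s)e(s)=(s^2)^{\frac{\alpha+m}{2}}(1+s^2)^{-m}=s^2g(s^2),
\]
not $s\,g(s^2)$; your formula is off by a factor $s$. Hence $(ep_\alpha)(\nabla)$ is a function of $-\Delta$ alone, and your identity $(ep_\alpha)(\nabla)=\nabla g(-\Delta)$ is false.

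Worse, the resulting chain proves the wrong statement. Suppose $\widetilde g(-\Delta)w=g(-\Delta)u$ were real. Since $\widetilde g(-\Delta)$ acts componentwise and is injective, every non-scalar component of $w$ would vanish, so $p_\alpha(\nabla)u$ would be real valued. That is false already for $\alpha=1$, where $p_1(\nabla)u=\nabla u$. Your line ``we get $w=r_1+\sum_ie_i\,\cdot$'' does not follow from anything. The Fourier multiplier $i\frac{\sum_je_jk_j}{|k|}|k|^\alpha$ you fall back on is the right answer. However, identifying the $S$-functional calculus of $\nabla$ with it is exactly the nontrivial step, and you only assert it.

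The repair is to take $m$ even. Then $e(s)=(s^2)^{m/2}(1+s^2)^{-m}$ is even, and the computation in the proof of Theorem~\ref{thm_palpha_qalpha_T2} gives $e(\nabla)=\widetilde g(-\Delta)$ with $\widetilde g(\xi)=\xi^{m/2}(1+\xi)^{-m}$. Meanwhile $p_\alpha e=s\,g(s^2)$ with $g(\xi)=\xi^{\frac{\alpha+m-1}{2}}(1+\xi)^{-m}$ is odd. For odd functions you need the companion identity
\[
S_L^{-1}(s,T)+S_L^{-1}(-s,T)=2T\,S_L^{-1}(s^2,T^2)
\]
instead of \eqref{Eq_SL_T2}; it follows from the same algebra with $Q_s[T]^{-1}-Q_{-s}[T]^{-1}=4s_0T\,Q_{s^2}[T^2]^{-1}$. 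It yields $(ep_\alpha)(\nabla)=\nabla g(-\Delta)$.

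Now $\widetilde g(-\Delta)w=\nabla g(-\Delta)u$ is $\mathbb{R}^n$-valued for real $u$. Since $\widetilde g(-\Delta)$ acts componentwise as an injective real operator, all components of $w$ outside $e_1,\dots,e_n$ must vanish. This version needs neither injectivity of $\nabla$ nor any Fourier analysis.

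The paper takes a different route. It writes $p_\alpha(\nabla_a)=\overline{q_{\alpha-1}(\nabla_a)\nabla_a}$ using Theorem~\ref{thm_Product_rule_mixed_powers}, approximates the real $u$ by the real functions $u_n$ of Lemma~\ref{lem_Core}, and uses that $q_{\alpha-1}(\nabla_a)$ preserves $\mathbb{R}^n$-valued functions.
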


\begin{proof}
Let us start with $q_\alpha(\nabla_a)$ in ii). First, it is already shown in Theorem~\ref{thm_palpha_qalpha_T2}, that
\begin{equation*}
q_\alpha(\nabla_a)=(\nabla_a^2)^{\frac{\alpha}{2}},
\end{equation*}
where the right hand side is understood as the $H^\infty$-functional calculus of sectorial operators. Since we have assumed that the coefficients $a_1,\dots,a_n$ satisfy the Assumption~\ref{ass_Coefficients} case II), they only depend on $a_i(x)=a_i(x_i)$. In particular, in the explicit action \eqref{Eq_Action_nabla_squared} of $\nabla_a^2$ all the mixed derivatives with $i\neq j$ in the first sum vanish, and we get
\begin{equation}\label{Eq_Vector_operator_2}
\nabla_a^2=-\sum\limits_{i=1}^na_ia_i'\frac{\partial}{\partial x_i}-\sum\limits_{i=1}^na_i^2\frac{\partial^2}{\partial x_i^2}.
\end{equation}
Clearly, if we apply it on a real valued function, it gives back a real valued function. \medskip

In order to show the same property for $(\nabla_a^2)^{\frac{\alpha}{2}}$, let us choose the regularizer $$e(s)=\frac{s^k}{(1+s)^{2k}},$$ for some $k>\frac{|\alpha|}{2}$. Then the $H^\infty$-functional calculus is given by
\begin{equation}\label{Eq_Vector_operator_3}
q_\alpha(\nabla_a)=(\nabla_a^2)^{\frac{\alpha}{2}}=e(\nabla_a^2)^{-1}\big(es^{\frac{\alpha}{2}}\big)(\nabla_a^2).
\end{equation}
Starting with the part $\big(es^{\frac{\alpha}{2}}\big)(T)$, we can use the $\omega$-functional calculus of sectorial operators, to write for every $\varphi\in(0,\pi)$, and any $J\in\mathbb{S}$
\begin{equation*}
\big(es^{\frac{\alpha}{2}}\big)(\nabla_a^2)=\frac{1}{2\pi}\int_{\partial S_\varphi\cap\mathbb{C}_J}S_L^{-1}(s,\nabla_a^2)ds_Je(s)s^{\frac{\alpha}{2}}.
\end{equation*}
As in the proof of \cite[Proposition~3.7]{MS24}, only considering the path $\gamma_+$, we can rewrite this integral as
\begin{equation*}
\big(es^{\frac{\alpha}{2}}\big)(\nabla_a^2)=\frac{1}{\pi}\int_0^\infty\big(A(r)x(r)-B(r)y(r)\big)dr,
\end{equation*}
using the operators and the functions
\begin{align*}
A(r)&=\big(r\cos(\varphi)-\nabla_a^2\big)Q_{re^{J\varphi}}[\nabla_a^2]^{-1}, \\
B(t)&=r\sin(\varphi)Q_{re^{J\varphi}}[\nabla_a^2]^{-1}, \\
x(r)&=-\cos(\varphi)\Im\big((es^{\frac{\alpha}{2}})(re^{J\varphi})\big)-\sin(\varphi)\Sc\big((es^{\frac{\alpha}{2}})(re^{J\varphi})\big), \\
y(r)&=\sin(\varphi)\Im\big((es^{\frac{\alpha}{2}})(re^{J\varphi})\big)-\cos(\varphi)\Sc\big((es^{\frac{\alpha}{2}})(re^{J\varphi})\big).
\end{align*}
Plugging this representation into \eqref{Eq_Vector_operator_3} and using the explicit representation of the regularizer $e$, gives
\begin{equation}\label{Eq_Vector_operator_1}
q_\alpha(\nabla_a)=\frac{1}{\pi}(1+\nabla_a^2)^{2k}(\nabla_a^2)^{-k}\int_0^\infty\big(A(r)x(r)-B(r)y(r)\big)dr.
\end{equation}
Since $\nabla_a^2$ in \eqref{Eq_Vector_operator_2} maps real valued function to real valued functions, all the elements in this representation have the same property. This means, if we apply $q_\alpha(\nabla_a)$ onto some real valued $u\in\dom(q_\alpha(\nabla_a))$, then $q_\alpha(\nabla_a)u$ is again real valued. \medskip

Turning now our attention to the mapping properties of $p_\alpha(\nabla_a)$ in part i) of this theorem, we know from Proposition~\ref{prop_Gradient_injective} that $\nabla_a$ is injective. Moreover, since the space $L^2(\mathbb{R}^n)$ is reflexive, because it is a Hilbert space, we get from \cite[Theorem~3.3~ii)]{CMS25} that $\overline{\ran}(\nabla_a)=L^2(\mathbb{R}^n)$. Hence we are allowed to use the power rule for mixed powers in Theorem~\ref{thm_Product_rule_mixed_powers}~i) and the representation of $p_1(\nabla_a)$ in Remark~\ref{rem_Integer_powers}, to write
\begin{equation*}
p_\alpha(\nabla_a)=\overline{q_{\alpha-1}(\nabla_a)\nabla_a}.
\end{equation*}
Let now $u\in\dom(p_\alpha(\nabla_a))$ be real valued. Then there exists $(u_n)_n\in\dom\big(q_{\alpha-1}(\nabla_a)\nabla_a\big)$, such that
\begin{equation}\label{Eq_Vector_operator_4}
u=\lim\limits_{n\rightarrow\infty}u_n\qquad\text{and}\qquad p_\alpha(\nabla_a)u=\lim\limits_{n\rightarrow\infty}q_{\alpha-1}(\nabla_a)\nabla_au_n.
\end{equation}
Note that, according to Lemma~\ref{lem_Core}, the sequence $u_n$ is explicitly given by
\begin{equation*}
    u_n := n^{2m} \nabla_a^{2m} (\nabla_a^2+n^2)^{-m}\left(\nabla_a^2+\frac{1}{n^2}\right)^{-m}u
\end{equation*}
where $m$ is an integer such that $m \ge \frac{\alpha+1}{2}$; therefore, since it has already been established that $\nabla_a^2$ gives back a real-valued function whenever it is applied to a real-valued function and $u$ itself is real-valued, we conclude that each $u_n$ is real-valued as well. This means that $\nabla_au_n$ is a pure vector valued function, and using the representation \eqref{Eq_Vector_operator_1} of $q_{\alpha-1}(\nabla_a)$, one sees that also $q_{\alpha-1}(\nabla_a)\nabla_au_n$ is a vector valued function again. Consequently, also $p_\alpha(\nabla_a)u$ in the second limit in \eqref{Eq_Vector_operator_4} is a pure vector valued function.
\end{proof}

\section{Concluding remarks}

Quaternionic and Clifford operators are fundamental in mathematics and physics, appearing in quaternionic quantum mechanics \cite{Adler1995}, vector analysis, differential geometry, and hypercomplex analysis. \medskip

The application in \textit{vector analysis} is illustrated by the gradient operator with nonconstant coefficients, see also \cite{CMS24}. \medskip

In \textit{differential geometry}, the Dirac operator on a Riemannian manifold is expressed as
\begin{equation*}
\mathcal{D}=\sum_{i=1}^ne_i\nabla_{E_i}^\tau,
\end{equation*}
using covariant derivatives \cite{DiracHarm}. Specific examples include the Dirac operators in hyperbolic and spherical spaces, denoted as $\mathcal{D}_H$ and $\mathcal{D}_S$ respectively, which are explored in \cite{DIRACHYPSPHE}. \medskip

In \textit{hypercomplex analysis}, the standard Dirac operator $D$ and its conjugate $\overline{D}$ are extensively studied \cite{DSS,DiracHarm}. Furthermore, slice hyperholomorphic functions are connected to the kernel of the global operator $G$ introduced in \cite{6Global}. Recent developments include the Dirac fine structure on the $S$-spectrum, involving operators of the form
\begin{equation*}
T_{\alpha,m}=D^\alpha(D\overline{D})^m\qquad\text{and}\qquad\widetilde{T}_{\beta,m}=\overline{D}^\beta(D\overline{D})^m,
\end{equation*}
which link specific function spaces to functional calculi.

\end{document}